\theoremstyle{plain}
    \newtheorem{theorem}{Theorem}[section]
    \newtheorem{corollary}[theorem]{Corollary}
    \newtheorem{lemma}[theorem]{Lemma}
    \newtheorem{proposition}[theorem]{Proposition}
    \newtheorem*{theorem*}{Theorem}
    \newtheorem*{proposition*}{Proposition}
    \newtheorem*{corollary*}{Corollary}
    \newtheorem*{theorem335*}{Theorem {\rm (\reft{fi})}}
    \newtheorem*{theorem6410*}{Theorem {\rm (\reft{lg3})}}
\theoremstyle{definition}
    \newtheorem*{definition}{Definition}
    \newtheorem*{obs*}{Notation}
    \newtheorem{remark}[theorem]{Remark}
    \newtheorem*{remarks*}{Remarks}
    \newtheorem*{remark*}{Remark}
    \newtheorem{example}[theorem]{Example}
    \newtheorem{clas*}{Exercises}
    \newtheorem*{claimno*}{Examples}
    \newtheorem*{claim*}{Example}
    \newcommand{\reft}[1]{Theo\-rem~\ref{#1}}
    \newcommand{\refp}[1]{Pro\-po\-si\-tion~\ref{#1}}
    \newcommand{\refl}[1]{Lem\-ma~\ref{#1}}
    \newcommand{\refc}[1]{Co\-rol\-lary~\ref{#1}}
    \newcommand{\refr}[1]{Re\-mark~\ref{#1}}
    \newcommand{\refts}[1]{Theo\-rems~\ref{#1}}
    \newcommand{\refps}[1]{Pro\-po\-si\-tions~\ref{#1}}
\newcommand{\N}{\mathbb N}
\newcommand{\E}{\mathbb E}
\newcommand{\bI}{\mathbf{I}}
\newcommand\alp{\alpha}
\newcommand{\nor}{Noe\-the\-rian ring}
\newcommand{\nol}{Noe\-the\-rian lo\-cal ring}
\newcommand{\rmo}{$R$-mo\-du\-le}
\newcommand{\fgr}{fi\-ni\-te\-ly ge\-ne\-ra\-ted $R$-mo\-du\-le}
\newcommand{\fg}{fi\-ni\-te\-ly ge\-ne\-ra\-ted}
\newcommand{\cm}{Cohen-Macau\-lay lo\-cal ring}
\newcommand{\CM}{Cohen-Macau\-lay}
\newcommand{\eq}{equi\-mul\-ti\-ple}
\newcommand{\ci}{com\-ple\-te in\-ter\-sec\-tion}
\newcommand{\gci}{ge\-ne\-ri\-cally a com\-ple\-te in\-ter\-sec\-tion}
\newcommand{\tf}{tor\-sion\-free }
\newcommand{\fgtfrmo}{fi\-ni\-te\-ly ge\-ne\-ra\-ted tor\-sion\-free
$R$-mo\-du\-le }
\newcommand{\flps}{free lo\-cally on the punc\-tu\-red spec\-trum}
\newcommand{\seq}[2]{({#1}_1,\dotsc,{#1}_{#2})}
\newcommand{\sek}[2]{{#1}_1,\dotsc,{#1}_{#2}}
\newcommand{\ses}[2]{{#1}_1 \+ \cdots \+ {#1}_{#2}}
\newcommand{\bct}{\begin{center} \begin{tabular}}
\newcommand{\ect}{\end{tabular} \end{center}}
\newcommand{\prd}{^{\prime\prime}}
\newcommand{\mat}{\begin{bmatrix}}
\newcommand{\emat}{\end{bmatrix}}
\newcommand{\edet}{\end{matrix} \right|}
\newcommand{\bca}{\begin{cases}}
\newcommand{\eca}{\end{cases}}
\newcommand{\lpar}{\left(}
\newcommand{\rpar}{\right)}
\renewcommand{\det}{\left| \begin{matrix}}
\newcommand{\col}{\colon}
\newcommand{\apl}[3]{#1 \col #2 \ra #3}
\newcommand{\x}{\times}
\newcommand{\ox}{\otimes}
\newcommand{\oxr}{\otimes_{R}}
\newcommand{\+}{\oplus}
\newcommand{\bop}{\bigoplus}
\newcommand{\sube}{\subseteq}
\newcommand{\subn}{\subsetneq}
\newcommand{\nsub}{\nsubseteq}
\newcommand{\sub}{\subset}
\newcommand{\sse}{if and only if\ }
\newcommand{\ra}{\rightarrow}
\newcommand{\LRa}{\Leftrightarrow}
\newcommand{\hra}{\hookrightarrow}
\newcommand{\tra}{\twoheadrightarrow}
\newcommand{\dis}{\displaystyle}
\newcommand{\cS}{{\mathcal S}}
\newcommand{\CR}{{\mathcal R}}
\newcommand{\cF}{{\mathcal F}}
\newcommand{\fm}{{\mathfrak{m}}}
\newcommand{\frp}{{\mathfrak{p}}}
\newcommand{\frq}{{\mathfrak{q}}}
\newcommand{\frk}{k}
\DeclareMathOperator{\rank}{rank}
\DeclareMathOperator{\spec}{Spec}
\DeclareMathOperator{\Ht}{ht}
\DeclareMathOperator{\Min}{Min}
\DeclareMathOperator{\supp}{Supp}
\DeclareMathOperator{\Ass}{Ass}
\DeclareMathOperator{\ann}{ann}
\DeclareMathOperator{\Quot}{Quot}
\DeclareMathOperator{\Hom}{Hom}
\DeclareMathOperator{\Ext}{Ext}
\DeclareMathOperator{\z}{Z}
\DeclareMathOperator{\proj}{proj}
\DeclareMathOperator{\depth}{depth}
\DeclareMathOperator{\grade}{grade}
\DeclareMathOperator{\ad}{ad}
\DeclareMathOperator{\de}{d}
\newcommand{\sR}{\spec(R)}
\newcommand{\Rm}{(R,\fm)}
\newcommand{\Rmk}{(R,\fm, \frk)}
\newcommand{\pjd}{\proj \dim \,}
\newcommand{\dpt}{\depth \,}
\newcommand{\grd}{\grade \,}
\newcommand{\lE}{\ell(E)}
\newcommand{\RE}{\CR(E)}
\newcommand{\FE}{\cF(E)}
\newcommand{\FV}{\cF(V)}
\newcommand{\SE}{\cS(E)}
\newcommand{\lI}{\ell(I)}
\newcommand{\RI}{\CR(I)}
\newcommand{\FI}{\cF(I)}
\newcommand{\ol}{\overline}
\newcommand{\biind}[3]{\renewcommand{\arraystretch}{0.5} #1_{%
\hspace{-1mm}\begin{array}[t]{l} {\scriptstyle #2}\\ {\scriptstyle #3}
\end{array}} \renewcommand{\arraystretch}{1}}
\begin{document}

\title[On equimultiple modules]
{On equimultiple modules}
\author{Ana L. Branco Correia}
\address{Centro de Estruturas Lineares e Combinat\'orias
\\ Universidade de Lisboa
\\ Av. Prof. Gama Pinto 2  \\ 1649-003 Lisboa
\\ Portugal}
\email{alcorreia@cii.fc.ul.pt}
\address{ Universidade Aberta \\
Rua Fern\~ao Lopes $2^{\text{o}}$ Dto
\\ 1000-132 Lisboa \\ Portugal
}
\email{matalrbc@univ-ab.pt}
\thanks{}
\thanks{}
\author{Santiago Zarzuela}
\address{Departament d'\`Algebra i Geometria \\ Universitat de
Barcelona \\ Gran Via 585 \\E-08007 Barcelona \\ Spain}
\email{szarzuela@ub.edu}
\thanks{The second author has been partially supported by MTM2004-01850 (Spain)}
\thanks{}
\subjclass{} \keywords{}
\date{}
\dedicatory{}
\commby{}

\begin{abstract}
    We study the class of \eq{} modules.  In particular, we prove several criteria for
    an \eq{} module to be a \ci{} and prove the openness of the \eq{} locus of an ideal module.
\end{abstract}

\maketitle

\section{Introduction}

\noindent Equimultiple ideals (i.e., analytic spread = height)
have been extensively studied partly because of their connections
to geometry. This notion is the algebraic formulation of the
concept of \eq{} variety introduced by O. Zariski, which is of
great importance in several aspects within the study of algebraic
singularities. We refer to the article of J. Lipman \cite{jl} or
the book \cite{hio} by M. Herrmann, S. Ikeda and U. Orbanz for a
detailed explanation of these connections.  On the other hand, the
re\-le\-van\-ce of equimultiple ideals is also focused on a
theorem by E. B\"oger (cf. \cite[Theorem~19.6]{hio}) which is an
extension to the equimultiple case of D. Rees' multiplicity
criterion for primary ideals in terms of reductions of ideals
\cite{re}. Non primary equimultiple ideals may be produced, for
instance, via linkage as shown by A. Corso, C. Polini and W. V.
Vasconcelos in \cite{cpv}.

\smallbreak

 Multiplicity theory was extended by D. Buchsbaum and
D. S. Rim \cite{br} to submodules of finite colength in a free
module introducing what is known by Buchsbaum-Rim multiplicity,
while D. Rees introduced the theory of reductions and integral
closure of modules in \cite{dr}. In this context of modules, Rees'
multiplicity criterion was proven by D. Kirby and D. Rees in
\cite{kr} and by S. L. Kleiman and A. Thorup in \cite{kt} and D.
Katz gave the corresponding extension of B{\"o}ger's theorem to
equimultiple modules in \cite{dk}. Both Buchsbaum-Rim multiplicity
and integral closure of modules play an important role in the work
by T. Gaffney \cite{tg1, tg2} on the study of equisingularity
conditions of isolated complete intersection singularities (ICIS),
which has been an important source of motivation to pursue the
study of multiplicity theory and related topics in the context of
modules.

\smallbreak

Equimultiple modules have also been defined by A. Simis, B. Ulrich
and W. V. Vasconcelos in \cite{suv} as a particular class of ideal
modules: The class of ideal modules behaves somehow similarly to
the class of ideals and one is then able to define the analytic
deviation of an ideal module, the equimultiple modules being those
with analytic deviation zero. Their definition is slightly
different but agrees with ours in the \CM{} case. They also show
how to produce such modules via linkage.


\smallbreak

The main purpose of this paper is to make a systematic approach to
the properties of equimultiple modules by using the theory of
reductions of modules. As application we obtain several criteria
for an equimultiple module to be a complete intersection and prove
the openness of the equimultiple locus of an ideal module
extending to the case of modules the corresponding results in the
ideal case.

\smallbreak

Let $R$ be a \nor{} with total ring of fractions $Q$ and $E \sube
G \simeq R^{e}$ an $R$-module having rank $e>0$. In our context,
many of the structural properties of $E$ are reflected by the
quotient $G/E$ and by the $e$-th Fitting ideal $F_{e}(E)$, being
these two sets related by $V(F_{e}(E)) \sube \supp G/E$. Moreover,
in the case where $\grd G/E \geq 2$, $E$ is said to be an {\it
ideal module}, and the inclusion is then an equality (see
\reft{bI1}). The Fitting ideals play an important role in the
study of this class of modules, interviewing in the definition of
\ci{} and \eq{} modules, cf. section \ref{sec:dad}. For this
reason, we pay special attention to the relations between $G/E$
and $F_{e}(E)$, cf. section \ref{sec:supp}.

\smallbreak

Complete intersection modules (i. e. modules of the principal
class) are of course equimultiple. We then prove several
criteria for an \eq{} module to be a \ci{} extending to modules
the corresponding ones in the ideal case. For example:

\begin{theorem*}{\rm [cf. \reft{cieq3}]}
    Let $R$ be a \cm, $E$ a non-free \fgtfrmo having rank $e >0$.
    Suppose that $E$ is \gci. Then $E$ is \ci{} \sse $E$ is \eq.
\end{theorem*}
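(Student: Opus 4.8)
The forward implication is immediate and was already observed above: if $E$ is of the principal class then $\mu(E)=\ell(E)=\Ht F_{e}(E)+e-1$, which is exactly equimultiplicity. So assume $E$ is equimultiple; the goal is to exhibit $E$ as a complete intersection module. As a preliminary reduction I would replace $R$ by $R(t)=R[t]_{\fm R[t]}$ and $E$ by $E\otimes_{R}R(t)$: this extension is faithfully flat, preserves Cohen--Macaulayness, the rank, torsionfreeness and non-freeness, commutes with Fitting ideals and with the fiber cone, and carries the complete intersection, generic complete intersection and equimultiple properties back and forth. Thus we may assume the residue field $k$ is infinite.

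With $k$ infinite, pick a minimal reduction $U=(u_{1},\dots ,u_{s})\sube E$ of $E$ generated by $s=\ell(E)$ general elements. Since $U$ is a reduction of $E$ one has $\supp(G/U)=\supp(G/E)$ and $\sqrt{F_{e}(U)}=\sqrt{F_{e}(E)}$, hence $\ell(U)=\ell(E)$ and $\Ht F_{e}(U)=\Ht F_{e}(E)$; together with equimultiplicity of $E$ this gives $\mu(U)=s=\Ht F_{e}(U)+e-1=\ell(U)$, so $U$ is a complete intersection module. By the properties of complete intersection modules over a Cohen--Macaulay ring (section~\ref{sec:dad}) the quotient $G/U$ is then Cohen--Macaulay, so $\Ass(G/U)=\Min(G/U)$, and this set is precisely $\Min F_{e}(E)$, each of whose elements has height $c:=\Ht F_{e}(E)$.

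It remains to show $E=U$, which I would check locally at the finitely many $\frp\in\Min F_{e}(E)$. Fix such a $\frp$. Then $R_{\frp}$ is Cohen--Macaulay of dimension $c$, $U_{\frp}$ is a reduction of $E_{\frp}$, and both $U_{\frp}$ and $E_{\frp}$ are complete intersection modules of finite colength in $G_{\frp}=R_{\frp}^{\,e}$ --- the first because $U$ is a complete intersection module, the second by the hypothesis that $E$ is \gci. The module analogue of Rees' multiplicity theorem (\cite{kr}, \cite{kt}, \cite{dk}) says that passing to a reduction leaves the Buchsbaum--Rim multiplicity unchanged, so $\br(U_{\frp})=\br(E_{\frp})$. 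On the other hand, for a complete intersection module of finite colength over a Cohen--Macaulay local ring the Buchsbaum--Rim multiplicity equals a genuine colength that decreases strictly under proper inclusion --- the precise analogue of $e(\frq)=\lambda(R_{\frp}/\frq)$ for a parameter ideal $\frq$. Hence $U_{\frp}\sube E_{\frp}$ and $\br(U_{\frp})=\br(E_{\frp})$ force $U_{\frp}=E_{\frp}$.

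Finally, $E/U$ is a submodule of $G/U$ that is zero at every $\frp\in\Min F_{e}(E)=\Ass(G/U)$; since a submodule has no associated prime outside those of the ambient module, $E/U=0$, i.e.\ $E=U$ is a complete intersection. I expect the main obstacle to be the local rigidity used in the third step: one must pin down the module-theoretic form of ``a reduction of a parameter ideal with equal multiplicity is the ideal'', phrased through the Buchsbaum--Rim multiplicity of complete intersection modules, together with the behaviour of the Fitting ideal $F_{e}$ under reductions. An alternative that bypasses multiplicities is to pass to a good generic Bourbaki ideal $I$ of $E$: over the Cohen--Macaulay ring obtained after adjoining the generic indeterminates, $E$ is a complete intersection, resp.\ equimultiple, resp.\ \gci, if and only if $I$ is, so the statement reduces to the already known ideal case.
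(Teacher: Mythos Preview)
Your argument is correct and shares the same global structure as the paper's proof: pass to an infinite residue field, take a minimal reduction $U$ of $E$, observe that $U$ is a \ci{} so that $G/U$ is \CM{} with $\Ass(G/U)=\Min R/F_{e}(E)$, and conclude by showing $(E/U)_{\frp}=0$ at each such $\frp$.

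The only real difference is in the local rigidity step at $\frp\in\Min R/F_{e}(E)$. You invoke the Buchsbaum--Rim machinery: $\br(U_{\frp})=\br(E_{\frp})$ because $U_{\frp}$ is a reduction, while for parameter modules over a \CM{} local ring $\br(-)=\lambda(G_{\frp}/-)$, which is strictly monotone. This works, but the paper's argument is considerably lighter: since $E_{\frp}$ is a \ci{} (by the \gci{} hypothesis together with $\Ht F_{e}(E_{\frp})=\Ht F_{e}(E)$), one has $\mu(E_{\frp})=\ell(E_{\frp})$, so $E_{\frp}$ admits no proper reductions (\refp{pdf}\,e)); hence $U_{\frp}=E_{\frp}$ directly. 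Your multiplicity argument buys nothing extra here and requires citing the Buchsbaum--Rim formula $\br=\lambda$ for parameter modules, whereas the paper stays entirely within the elementary reduction theory developed in Sections~\ref{sec:rm}--\ref{sec:dad}. The Bourbaki-ideal alternative you sketch would likewise be correct but is again heavier than needed.
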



In section \ref{sec:open} we also prove the openness of the \ci{}
and the \eq{} locus, for ideal modules.

\begin{theorem*}{\rm [cf. \refts{openci}, \ref{openeq}]}
Let $R$ be a \nor{} and $E \subn G \simeq R^{e}$ an ideal module.
Then
\begin{enumerate}
\item[a)] $U_{ci}= \{ \frp \in \supp G/E \mid E_{\frp} \,
\textrm{is a \ci}\}$ is a (possibly empty)

open subset in $\supp G/E$. \item[b)] $U_{eq}= \{ \frp \in \supp
G/E \mid E_{\frp} \, \textrm{is \eq}\}$ is a non-empty open subset
in $\supp G/E$.
\end{enumerate}
\end{theorem*}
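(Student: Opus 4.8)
The strategy is to reduce the openness of the complete intersection and equimultiple loci for an ideal module $E$ to openness statements involving Fitting ideals and local number of generators, both of which can be detected on $\spec R$ via standard semicontinuity arguments, and then transport these to $\supp G/E$ using the equality $V(F_e(E)) = \supp G/E$ guaranteed for ideal modules by \reft{bI1}. Throughout I would work with the comparison between the module $E$ and the ideal $F_e(E)$: for $\frp \in \supp G/E$, the localization $E_\frp$ being a complete intersection (respectively equimultiple) should be characterized numerically by $\mu(E_\frp)$ (the minimal number of generators), $\Ht F_e(E)_\frp$, and the analytic spread $\ell(E_\frp)$, in analogy with the ideal case where an ideal $I$ in $R_\frp$ is a complete intersection iff $\mu(I_\frp) = \Ht I_\frp$ and equimultiple iff $\ell(I_\frp) = \Ht I_\frp$.

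For part (a): first I would show that the locally-complete-intersection condition at $\frp$ is equivalent to $\mu(E_\frp) - e = \Ht F_e(E)_\frp$ (i.e. the expected codimension is attained; since $E$ has rank $e$ one always has $\mu(E_\frp)-e \le \Ht F_e(E)_\frp$ by the usual bound on heights of Fitting ideals, so the complete intersection locus is where equality holds). The function $\frp \mapsto \mu(E_\frp)$ is upper semicontinuous on $\spec R$ because it is computed by the ranks of presentation matrices, hence the set where $\mu(E_\frp) \le e + k$ is open for each $k$; and the set where $\Ht F_e(E)_\frp \ge k$ is also constructible, indeed its complement within $V(F_e(E))$ behaves well because $\Ht F_e(E)_\frp = \dim R_\frp - \dim (R/F_e(E))_\frp$ arguments combined with Noetherianity give openness of $\{\Ht F_e(E)_\frp \le k\}$ along $V(F_e(E))$. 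Intersecting these, and using that on each irreducible component $\mu$ and $\Ht$ can only jump up on closed sets, one sees $U_{ci}$ is open in $\supp G/E = V(F_e(E))$. The possible emptiness is clear (take an ideal module that is nowhere a complete intersection).

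For part (b): the equimultiple locus. Here I would invoke the theory of reductions of modules developed earlier (and the analytic spread $\ell(E)$): equimultiplicity at $\frp$ means $\ell(E_\frp) = \Ht F_e(E)_\frp$. Since for an ideal module $E \subn G$ one has the general inequality $\Ht F_e(E)_\frp \le \ell(E_\frp) \le \mu(E_\frp) - e + 1$ type bounds, the equimultiple locus contains $U_{ci}$, which already shows $U_{eq}$ is non-empty once we know $U_{ci} \ne \emptyset$ — but in general $U_{ci}$ may be empty, so non-emptiness of $U_{eq}$ needs a separate argument: I would show that the generic points of $\supp G/E$ (the minimal primes of $F_e(E)$) lie in $U_{eq}$, because at a minimal prime $\frp$ of $F_e(E)$ the ideal $F_e(E)_\frp$ is $\frp R_\frp$-primary, forcing $\ell(E_\frp) = \Ht F_e(E)_\frp$ by dimension reasons (the fiber cone is finite-dimensional of the right dimension). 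For openness, the standard device is to pass to a generic Bourbaki ideal: by the theory alluded to in the introduction, $E$ has (after a faithfully flat extension) a Bourbaki ideal $I$ with $\Ht I = \Ht F_e(E)$ and $\ell(I) = \ell(E) - e + 1$, and $E_\frp$ is equimultiple iff $I_\frp$ is; then $U_{eq}$ for $E$ corresponds to $U_{eq}$ for the ideal $I$, and the openness of the equimultiple locus for ideals (the classical result, provable via the fiber-cone/analytic-spread semicontinuity: $\ell(I_\frp)$ is upper semicontinuous and $\Ht I_\frp$ is lower semicontinuous along $V(I)$ on a Noetherian scheme) descends back by faithful flatness.

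The main obstacle I anticipate is controlling the analytic spread $\ell(E_\frp)$ as $\frp$ varies — it is the function whose semicontinuity is least routine, since $\ell(E_\frp) = \dim \FE \otimes_R \kappa(\frp)$ is the dimension of a fiber of the fiber cone $\FE = \Proj$ of the Rees algebra, and establishing that $\{\frp : \ell(E_\frp) \le k\}$ is open requires Chevalley-type upper semicontinuity of fiber dimension together with a reduction to the ideal case to handle the interaction with $\Ht F_e(E)_\frp$. Managing the Bourbaki-ideal passage cleanly — ensuring the relevant invariants ($\Ht F_e$, $\ell$, the complete intersection / equimultiple property) are all preserved under the generic hyperplane-section construction and that openness is both inherited and descended along the faithfully flat base change — is where the technical weight of the argument lies.
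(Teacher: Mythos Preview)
Your overall strategy for part (a) --- combining upper semicontinuity of $\frp\mapsto\mu(E_\frp)$ with control of $\frp\mapsto\Ht F_e(E)_\frp$ --- is the same as the paper's. Two corrections: the numerical characterization of complete intersection is $\mu(E_\frp)-e+1=\Ht F_e(E_\frp)$ (not $-e$), and the Eagon--Northcott bound gives $\Ht F_e(E_\frp)\le \mu(E_\frp)-e+1$, the reverse of what you wrote; similarly equimultiplicity at $\frp$ is $\ell(E_\frp)=\Ht F_e(E_\frp)+e-1$, not $\ell(E_\frp)=\Ht F_e(E_\frp)$. The paper handles the height side very concretely via a short lemma: given $\frp\in V(F_e(E))$ one can choose $\alpha\notin\frp$ so that $\Ht F_e(E)R_\alpha=\Ht F_e(E_\frp)$ (by throwing away the minimal primes of $F_e(E)$ not contained in $\frp$). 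Intersecting $D(\alpha)$ with the open set $\{\mu\le r\}$ gives the desired neighborhood. Your formula $\Ht F_e(E)_\frp=\dim R_\frp-\dim(R/F_e(E))_\frp$ is not available without a Cohen--Macaulay hypothesis, so you should not lean on it.

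For part (b) your route diverges substantially from the paper's. The paper does \emph{not} pass to a generic Bourbaki ideal or invoke Chevalley semicontinuity of fiber dimension. Instead it spreads out a reduction directly: given $\frp\in U_{eq}$ with $s=\ell(E_\frp)$, pick $a_1,\dots,a_s\in E$ whose images form a homogeneous system of parameters of $\cF(E_\frp)$, so that $E_\frp^r=a_1E_\frp^{r-r_1}+\cdots+a_sE_\frp^{r-r_s}$ for some $r$; since $E^r/(a_1E^{r-r_1}+\cdots+a_sE^{r-r_s})$ is finitely generated and vanishes at $\frp$, one finds $\alpha\notin\frp$ killing it, and the same $\alpha$ can be chosen (via the height lemma above) so that $\Ht F_e(E)R_\alpha=s-e+1$. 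Then for every $\frq\in D(\alpha)\cap\supp G/E$ one has $\ell(E_\frq)\le s$ and $\Ht F_e(E_\frq)\ge s-e+1$, forcing equality. This is entirely elementary and self-contained within the reduction theory developed in the paper; your Bourbaki-ideal detour would require constructing the generic Bourbaki ideal, checking that equimultiplicity is preserved and reflected under that passage, and then descending openness along the faithfully flat polynomial extension --- all true, but considerably heavier machinery that the paper neither develops nor needs. Your argument for non-emptiness (minimal primes of $F_e(E)$ lie in $U_{eq}$) is exactly the paper's.
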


\smallbreak

As in the case of ideals, the notion of Rees algebra appears
naturally in this context. Let $R$ be a \nor{} and $E$ \fg{}
\rmo{} that affords an embedding into a free \rmo,
$E\overset{f}{\hra} G \overset{\varphi}{\simeq} R^{e}$. For such a
module, the Rees algebra $\RE$ of E is the $R$-subalgebra of the
polynomial ring $R[\sek{t}{e}]$ generated by all linear forms
$a_{1}t_{1} + \cdots + a_{e}t_{e}$, where $\seq{a}{e}$ is the
image of an element of $E$ in $R^e$ under the embedding $\varphi
\circ f$. Summarizing,
\begin{equation*}
  \RE : =\bop_{n \geq 0} \cS(f)_{n}(\cS(E)_{n})
  \sube R[\sek{t}{e}],
\end{equation*}
where $\apl{\cS(f)}{\cS(E)}{\cS(G)= R[\sek{t}{e}]}$ is the induced
map of symmetric algebras. One should note that, for given
different embeddings of $E$ into free $R$-modules, we can get non
isomorphic Rees algebras, see for instance A. Micali
\cite[Chapitre III, 2. Un example]{mi} or the more recent D.
Eisenbud, C. Huneke and B. Ulrich \cite[Example 1.1]{ehu}. See
also these papers for a discussion about the uniqueness of the
definition of the Rees algebra of a module.

\smallbreak

In the particular case that $E$ is a \fg{} torsionfree \rmo{} with
rank $e$, then $E$ affords an embedding into a free module of the
same rank, $E \overset{f}{\hra} G \overset{\varphi}{\simeq} R^{e}$
and one can see (because $E$ is torsion free) that
\begin{equation*} \label{frel}
   \RE \simeq \SE/\tau_{R}(\SE),
\end{equation*}
so the Rees algebra of $E$ is independent of the embedding $f$. We
then denote by $E^n$ the $n$-th graded piece of $\RE$, that is
$E^{n} := \RE_{n}$ and call it the $n$-th Rees power of $E$.

\smallbreak

A special case is the Rees algebra of a module $E=I_{1}\+ \cdots
\+ I_{e}$ where $\sek{I}{e}$ are $R$-ideals. Then, $\RE$ is the
multi-Rees algebra $\CR(\sek{I}{e})=R[I_{1}t_{1}, \ldots ,
I_{e}t_{e}]$. In section \ref{sec:ex} we give some examples of
equimultiple modules of this type. Finally, in section
\ref{sec:rp}, we characterize the non-free locus of the
corresponding Fitting ideal of each $n$-th Rees power $E^n$, and
give an easy proof of the Burch's inequality for \eq{} modules.

\smallbreak

In this paper we shall not use the notion of integral closure of
modules. For the general aspects of this theory we refer to the
corresponding chapters of the recent books by W. V. Vasconcelos
\cite{v2} and I. Swanson and C. Huneke \cite{sh}.

\section{Reduction of modules} \label{sec:rm}

\noindent In this section we review the notion of reduction of
modules and state the results we shall use throughout this paper.
\smallbreak

Suppose that $E$ is a \fgtfrmo having a rank over a \nor{} $R$.
Let $U$ be an $R$-submodule of $E$. $U$ is said to be a {\it
reduction} of $E$ if $$ E^{r+1} = U \cdot E^{r} $$ for some $r
\geq 0$ (this product taken inside $R(E)$). The least integer $r$
for which $E^{r+1}= U \cdot E^{r}$ is called {\it the reduction
number of} $E$ {\it with respect to} $U$, and is denoted by
$r_{U}(E)$. A reduction of $E$ is called {\it minimal} if it is
minimal with respect to inclusion. \smallbreak

It is clear that $E$ is a reduction of itself with $r_{E}(E)=0$.
Moreover, if $U$ is a reduction of $E$, then $U \oxr S$ is a
reduction of $E \oxr S$ where $S$ is any of the rings $R_{\frp}$
with $\frp$ a prime ideal, $Q=\Quot(R)$ or a polynomial ring.
Further if $U$ is a reduction
and $E^{r+1} = U \cdot E^{r}$ for some $r \geq 0$ then
$E^{n+1} = U \cdot E^{n}$ for all $n \geq r$. \smallbreak

Since $\RE$ is a standard graded algebra over $R$, one may also
apply to this situation the notion of reduction for graded rings
introduced by A. Ooishi in \cite{o}. In fact, this is equivalent
to the above notion of reduction of modules when the results in
\cite{o} are adequately read in our set up. Alternatively, it is
possible to translate to the case of modules the results and
proofs in \cite[section 10]{hio} for ideals in order to obtain the
basic properties of the theory of reduction of modules.
\smallbreak

Recall that given a \nol{} $\Rmk$ the {\it fiber cone} of $\RE$ is
the graded ring $\FE = \RE/\fm \RE = \bigoplus_{i\geq 0}
E^{i}/\fm E^{i}$. The Krull dimension of $\cF(E)$ is called
the {\it analytic spread} of $E$ and is denoted by $\lE$. For an
element $a\in E$ we denote by $\ol{a}= a+ \fm E \in E/\fm E\subset
\FE$. Then one can see that $U=Ra_1 + \cdots + Ra_n$ is a
reduction of $E$ \sse $\dim \FE/\langle \ol{a_1}, ... , \ol{a_n}
\rangle=0$. In particular, we get $\mu(U) \geq \lE$. \smallbreak

Next, we list the results on the theory of reduction of modules
that we shall use later in this paper. \smallbreak

\begin{proposition}\label{red}
    Let $\Rmk$ be a \nol, $E$ a \fgtfrmo having rank, $U$ a
    reduction of $E$.

    \begin{enumerate}
        \item  There exists $V\subseteq U\subseteq E$ which is a minimal reduction of $E$.

        \item  If $V\subseteq E$ is a minimal reduction of $E$ and
        $V=\langle a_1, \ldots ,a_n \rangle$ with $n=\mu(V)$, then
        $\sek{\ol{a}}{n} \in \FE$ are linearly independent, i.e. $\fm E\cap V=\fm  V$.

        \item If $V\subseteq U \subseteq E$ is a minimal reduction of $E$ and
        $V=\langle a_1, \ldots ,a_n \rangle$ with $n=\mu(V)$, then there exist
        $b_1, \ldots ,b_m \in E$ such that $\langle a_1, \ldots , a_n, b_1, \ldots , b_m \rangle =E$
        and $\mu (E)=n+m$. In particular, $\mu(U) \geq \mu (V)\geq \lE$.

        \item The following are equivalent:

        \begin{enumerate}

        \item[d1)] $V\subseteq E$ is a reduction and $\mu (V)=\lE$.

        \item[d2)] If $V=\langle a_1, \ldots ,a_n \rangle$ with
        $n=\mu(V)$, then $\sek{\ol{a}}{n} \in \FE$ is a
        homogeneous system of parameters.

        \end{enumerate}

        And if any of these two equivalent conditions holds, $V$
        is a minimal reduction of $E$.

        \item If the residue field $k$ is infinite and $V\subseteq U$ is a minimal reduction,
        then conditions d1) and d2) hold. In particular,
        $\FV\subset \FE$ is a noether normalization of $\FE$
        and $V^n \cap \fm E^n =\fm V^n$ for all $n\geq 0$.
    \end{enumerate}
\end{proposition}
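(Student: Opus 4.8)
The plan is to establish the five items by reducing everything to the corresponding statements about the standard graded algebra $\RE$ over $R$, and in particular to the fiber cone $\FE$, which is a standard graded algebra over the field $k$. This is legitimate because $\RE$ is standard graded over $R$ and a submodule $U = \langle \sek{a}{n} \rangle \subseteq E$ is a reduction of $E$ exactly when $\dim \FE/\langle \sek{\ol{a}}{n}\rangle = 0$, as recalled just before the statement. So throughout I identify "reduction of $E$" with "subalgebra of $\RE$ generated in degree $1$ whose image in $\FE$ cuts it out to dimension zero", and $\mu(V)$ with the number of degree-one generators one needs.

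First I would prove (d). Given $V = \langle \sek{a}{n}\rangle$ with $n = \mu(V)$ a reduction, the images $\sek{\ol{a}}{n}$ generate a zero-dimensional quotient of $\FE$; since $\dim \FE = \lE$ by definition, a system generating an ideal of height equal to the dimension with exactly $\lE$ elements is by definition a homogeneous system of parameters, giving d1)$\Rightarrow$d2); the converse is immediate since an h.s.o.p. generates a zero-dimensional quotient. For the "moreover" part, that such a $V$ is a minimal reduction: if $W \subsetneq V$ were a reduction, its image in $\FE$ would be generated by fewer than $\lE$ elements yet still cut $\FE$ to dimension zero, contradicting the fact that $\lE = \dim \FE$ cannot be cut down to zero by fewer than $\lE = \operatorname{ht}$ elements (Krull's height theorem in $\FE$). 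Then (a) follows by Noetherian descent: among all reductions $W \subseteq U$, pick one minimal with respect to inclusion — such exists because the number of degree-one generators of a reduction is bounded below by $\lE$, so one cannot descend forever; alternatively, pass to a generating set and prune.

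Next I would handle (b) and (c), which are the "linear independence of the generators of a minimal reduction" statements — I expect these to be the main technical obstacle, since they require going from minimality of $V$ back to a statement inside $E$ rather than inside $\FE$. For (b): if $V = \langle \sek{a}{n}\rangle$ with $n = \mu(V)$ is a minimal reduction but $\sek{\ol{a}}{n}$ were $R$-linearly dependent modulo $\fm E$, i.e. $\fm E \cap V \supsetneq \fm V$, then one of the $a_i$ lies in $\langle a_1,\dots,\widehat{a_i},\dots,a_n\rangle + \fm E$, and one uses a Nakayama/determinant-trick argument at the level of the Rees algebra $\RE$ (which is module-finite over the subalgebra generated by $V$ since $V$ is a reduction) to conclude that $V' = \langle a_1,\dots,\widehat{a_i},\dots,a_n\rangle$ is already a reduction of $E$, contradicting minimality; this is where the hypothesis that $E$ has a rank and is torsionfree is used, to ensure $\RE$ behaves well. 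Statement (c) is then bookkeeping: extend $\sek{a}{n}$ to a minimal generating set of $E$ by choosing $\sek{b}{m}$ whose images complete a $k$-basis of $E/\fm E$, using that (b) guarantees $\sek{\ol{a}}{n}$ are part of such a basis; then $\mu(E) = n + m$ by Nakayama, and $\mu(U) \geq \mu(V) \geq \lE$ follows from (d) together with $V \subseteq U$ (any generating set of $U$ contains a reduction, hence at least $\lE$ elements).

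Finally, (e): when $k$ is infinite, a generic $k$-linear combination of any generating set of $\FE_1$ yields a homogeneous system of parameters of $\FE$ of length $\lE$ — this is the standard prime-avoidance / generic-hyperplane-section argument for standard graded algebras over an infinite field. Lifting such a system to elements of $E$ produces $V \subseteq U$ with $\mu(V) = \lE$ satisfying d1), hence by (d) a minimal reduction; and since all minimal reductions of $E$ have the same number of generators when $k$ is infinite (they all satisfy d1) by this argument applied inside a minimal reduction), conditions d1) and d2) hold for any minimal reduction $V \subseteq U$. That $\FV \subseteq \FE$ is then a Noether normalization is just the definition of h.s.o.p. together with $\FV$ being a polynomial ring (the $\sek{\ol{a}}{n}$ being algebraically independent, as parameters in a domain-quotient situation — or more carefully, $\FV$ is generated by $\lE = \dim \FE$ elements over which $\FE$ is finite, forcing $\FV$ to be a polynomial subring). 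The equality $V^n \cap \fm E^n = \fm V^n$ for all $n$ is the graded analogue of (b) in each degree, and follows from flatness of the Noether normalization $\FV = k[\sek{\ol{a}}{n}] \hookrightarrow \FE$ together with the identification $\FE_n = E^n/\fm E^n$ and $\FV_n = V^n/\fm V^n$: the inclusion $\FV_n \hookrightarrow \FE_n$ being split injective over $k$ forces $V^n \cap \fm E^n = \fm V^n$.
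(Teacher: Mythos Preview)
The paper does not give its own proof of this proposition: it is presented as a list of facts to be used later, and the text just before it says explicitly that these results are obtained by translating to modules the proofs in \cite[Section~10]{hio} for ideals, or equivalently by reading Ooishi's results \cite{o} on reductions of graded rings in this setup. Your overall strategy --- reduce everything to the fiber cone $\FE$ and use the dictionary between reductions of $E$ and systems of degree-one elements cutting $\FE$ to dimension zero --- is precisely the translation the paper has in mind, so at the level of approach there is nothing to compare.

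Two steps in your outline, however, are not yet proofs. For (a), ``the number of generators of a reduction is bounded below by $\lE$, so one cannot descend forever'' is not a valid termination argument: a strictly descending chain of submodules can have constant or even increasing $\mu$. What works is to minimize $\delta(W)=\dim_k\bigl((W+\fm E)/\fm E\bigr)$ over reductions $W\subseteq U$, replace a minimizing $W$ by the submodule generated by lifts of a $k$-basis of $(W+\fm E)/\fm E$ (still a reduction, by the Nakayama step you already use for (b)), observe that now $W\cap\fm E=\fm W$, and conclude that any proper sub-reduction would have strictly smaller $\delta$. For the final assertion in (e), your argument is circular: you invoke the identification $\FV=k[\ol a_1,\dots,\ol a_{\lE}]$ to deduce $V^n\cap\fm E^n=\fm V^n$, but that identification \emph{is} the injectivity of $\FV\to\FE$, which is equivalent to $V^n\cap\fm E^n=\fm V^n$ for all $n$. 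The non-circular route is a dimension count: $\FV$ is standard graded over $k$ with $\lE$ generators, hence a quotient of $k[x_1,\dots,x_{\lE}]$; its image in $\FE$ is the polynomial ring $k[\ol a_1,\dots,\ol a_{\lE}]$ of dimension $\lE$; therefore $\dim\FV=\lE$, the kernel of $k[x_1,\dots,x_{\lE}]\twoheadrightarrow\FV$ is zero, and $\FV\hookrightarrow\FE$.
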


As a consequence, minimal reductions always exist. The {\it
reduction number} of $E$, denoted by $r(E)$, is the minimum of
$r_{U}(E)$, where $U$ ranges over all minimal reductions of $E$.
\smallbreak

\begin{remark}\label{hsop}
If the residue field is finite, a minimal set of generators of a
minimal reduction of $E$ is not necessarily a homogeneous system
of parameters of $\FE$. Nevertheless, there always exist
homogeneous systems of parameters of $\FE$. This is equivalent to
the existence of a family of elements $a_1\in E^{r_1}\setminus \fm
E^{r_1}, \ldots , a_s\in E^{r_s}\setminus \fm E^{r_s}$, where
$s=\lE$, such that for some $r$, $E^r=a_1E^{r-r_1}+ \cdots +
a_sE^{r-r_s}$; and $\lE$ is the minimum positive number for a such
family of elements to exist.

\end{remark}

\begin{corollary}\label{as0}
    Let $\Rmk$ be a \nol, $E$ a \fgtfrmo
    having rank.
    Then
    $\ell(E_{\frp}) \leq \lE$ for all $\frp \in \sR$
    \end{corollary}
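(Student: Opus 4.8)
The plan is to derive the bound directly from \refr{hsop}, which produces homogeneous systems of parameters of the fiber cone over an \emph{arbitrary} residue field; in this way no preliminary reduction to the case of infinite residue field is needed.

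Fix $\frp\in\sR$. The first thing to record is that Rees powers localize: since a reduction of $E$ localizes to a reduction of $E_\frp$ (cf. Section~\ref{sec:rm}), one has $\RE\oxr R_\frp\simeq\CR(E_\frp)$, hence $(E^{n})_\frp\simeq (E_\frp)^{n}$ for all $n\geq 0$. Consequently $\cF(E_\frp)=\CR(E_\frp)\otimes_{R_\frp}\kappa(\frp)=\bop_{n\geq 0}(E_\frp)^{n}/\frp R_\frp(E_\frp)^{n}$ is a finitely generated graded algebra over the field $\kappa(\frp)=R_\frp/\frp R_\frp$, generated in degree one, and its Krull dimension is $\ell(E_\frp)$ by definition of the analytic spread.

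Now apply \refr{hsop} to $E$: there are an integer $r$ (which we may take with $r\geq\max_i r_i$), indices $r_1,\dots,r_s\geq 1$ with $s=\lE$, and elements $a_i\in E^{r_i}\setminus\fm E^{r_i}$ such that $E^{r}=a_1E^{r-r_1}+\cdots+a_sE^{r-r_s}$. Localizing this identity at $\frp$, multiplying through by $(E_\frp)^{m}$ for $m\geq 0$, and reducing modulo $\frp R_\frp$ gives $\cF(E_\frp)_{n}\subseteq(\ol{a_1},\dots,\ol{a_s})$ for every $n\geq r$, where $\ol{a_i}$ denotes the image of $a_i$ in the degree-$r_i$ component of $\cF(E_\frp)$. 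Hence the ideal $(\ol{a_1},\dots,\ol{a_s})$ has radical equal to the irrelevant maximal ideal of $\cF(E_\frp)$. Since the height of the irrelevant maximal ideal of a graded algebra over a field equals its Krull dimension, Krull's height theorem yields $\ell(E_\frp)=\dim\cF(E_\frp)=\Ht(\ol{a_1},\dots,\ol{a_s})\leq s=\lE$.

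The only ingredients used, besides \refr{hsop}, are the identification $\RE\oxr R_\frp\simeq\CR(E_\frp)$ and Krull's height theorem applied to the fiber cone, so I expect no serious obstacle; the one point to handle with a little care is that some $\ol{a_i}$ may vanish in $\cF(E_\frp)$, but this is harmless, since Krull's bound only needs an ideal generated by $s$ elements, whatever they are. Alternatively one could base change to $R[t]_{\fm R[t]}$ to make the residue field infinite, pick a minimal reduction $U=\seg{a}{s}$ of $E$ with $s=\lE$ by \refp{red}, and use that $U_\frp$ is then a reduction of $E_\frp$ with $\mu(U_\frp)\leq s$, whence $\ell(E_\frp)\leq\mu(U_\frp)\leq s$; but the argument above is uniform in the residue field and shorter.
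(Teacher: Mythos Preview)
Your argument is correct, and it differs from the paper's proof in an interesting way. The paper first assumes the residue field is infinite, picks a minimal reduction $U$ of $E$ with $\mu(U)=\lE$ (via \refp{red}), and concludes $\ell(E_\frp)\leq\mu(U_\frp)\leq\mu(U)=\lE$; it then handles the finite residue field by passing to a Nagata extension $R'$, using that $R'_{\frp R'}$ is a Nagata extension of $R_\frp$ and that analytic spread is preserved. This is precisely the alternative you sketch in your last paragraph. Your main argument instead exploits \refr{hsop} to get a homogeneous system of parameters valid over any residue field, localizes the resulting identity $E^{r}=\sum_i a_iE^{r-r_i}$, and then reads off the bound from Krull's height theorem applied in $\cF(E_\frp)$. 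The advantage of your route is that it is uniform in the residue field and avoids the Nagata-extension bookkeeping; the paper's route is perhaps more transparent from the point of view of reduction theory, since the inequality $\ell(E_\frp)\leq\mu(U_\frp)$ is immediate once a minimal reduction is in hand.

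One small point: the isomorphism $\RE\oxr R_\frp\simeq\CR(E_\frp)$ is not really a consequence of ``reductions localize''. It follows directly from the description $\RE\simeq\SE/\tau_R(\SE)$ (valid because $E$ is torsionfree with rank), together with the fact that symmetric algebras and torsion submodules commute with localization; equivalently, it comes from the embedding $\RE\hookrightarrow R[\sek{t}{e}]$ and flatness of $R\to R_\frp$. This identification is exactly what the paper later records as \refr{rpp}. Once this is in place, your computation $\cF(E_\frp)_n\subseteq(\ol{a_1},\dots,\ol{a_s})$ for $n\geq r$ and the appeal to Krull's theorem go through as written, including the degenerate case where some (or all) $\ol{a_i}$ vanish.
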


\begin{proof}
     Assume first that $k$ is infinite. Let $U$ be a minimal
     reduction of $E$ and let $\frp \in \sR$
     be any prime. Then $U_{\frp}$ is a reduction of $E_{\frp}$ and so
     $$ \ell(E_{\frp}) \leq \mu(U_{\frp}) \leq \mu(U)=\lE.$$
     Let $R\prd$ be a Nagata extension of $R$. Hence
     $\frq = \frp R\prd \in \spec(R\prd)$ and $R_{\frq}\prd$ is
     a Nagata extension of $R_{\frp}$. Therefore,
     applying the above inequality
     $$ \ell(E_{\frp}) = \ell(E_{\frp} \ox_{R_{\frp}} R_{\frq}\prd) =
     \ell(E_{\frq}\prd) \leq \ell(E\prd)=\lE.$$
     \end{proof}

A module $E$ is said to be of {\it linear type} if $\RE=\SE$. Clearly, every \fg{} free
module over a \nor{} is of linear type. \smallbreak

Next we observe that a module of linear type admits no proper reductions.


\begin{corollary}\label{as01}
    Let $R$ be a Noetherian ring, $E$ a \fgtfrmo
    having rank. If $E$ is of linear type then $E$ has no proper
    reductions. In particular, if $E$ is a free
    $R$-module then $E$ has no proper reductions.
\end{corollary}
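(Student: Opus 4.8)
The plan is to reduce to the case of the localization at a maximal ideal and then invoke the equivalence between the notion of reduction of modules and the Krull dimension of the fiber cone recalled in the preliminaries. Suppose $U \subseteq E$ is a reduction of $E$; we want to show $U = E$. Since $U = E$ can be checked locally, and localization commutes with the formation of Rees algebras and preserves reductions (as recalled after the definition of reduction), we may replace $R$ by $R_{\fm}$ for an arbitrary maximal ideal $\fm$ and assume that $\Rmk$ is a Noetherian local ring. We must also make sure that $E_{\fm}$ remains of linear type: this follows because forming the symmetric algebra and the Rees algebra both commute with localization, so $\RE_{\fm} \simeq \cR(E_{\fm})$ and $\SE_{\fm} \simeq \cS(E_{\fm})$, whence $E_{\fm}$ is again of linear type. (If one wants to avoid worrying about whether $E_{\fm}$ still has a rank, one observes that a torsionfree module having a rank over $R$ still has one after localization, since the total ring of fractions localizes correctly.)

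So assume $\Rmk$ is local and $E$ is of linear type, $\RE = \SE$, and let $U \subseteq E$ be a reduction. By \refp{red}(a) there is a minimal reduction $V \subseteq U$ of $E$, say $V = \langle \sek{a}{n} \rangle$ with $n = \mu(V)$. The key point is that for a module of linear type the fiber cone is a polynomial ring: indeed $\FE = \RE/\fm\RE = \SE/\fm\SE = \cS_{R/\fm}(E/\fm E)$, which is the symmetric (hence polynomial) algebra on the $k$-vector space $E/\fm E$ of dimension $\mu(E)$. Therefore $\dim \FE = \mu(E)$, i.e. $\lE = \mu(E)$. On the other hand, since $V$ is a reduction of $E$ we have $\dim \FE/\langle \sek{\ol{a}}{n}\rangle = 0$ (this is the characterization of reductions via the fiber cone stated in the text), so $n \geq \dim\FE = \mu(E)$. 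Combining with \refp{red}(c), which gives $\mu(E) = n + m \geq n$, forces $m = 0$ and $E = \langle \sek{a}{n}\rangle = V$. Hence $E = V \subseteq U \subseteq E$, so $U = E$, proving $E$ has no proper reductions. The ``in particular'' follows since a finitely generated free module over a Noetherian ring is of linear type (its symmetric algebra is a polynomial ring, which is already the Rees algebra).

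The main obstacle — really the only subtle point — is the passage to the local case: one must be certain that ``of linear type'' and ``torsionfree having a rank'' both descend to localizations, and that ``$U = E$'' is a local property for submodules of a finitely generated module over a Noetherian ring. All three are standard, but they are exactly where the argument could go wrong if one is careless, so I would state them explicitly. An alternative, avoiding localization entirely, would be to argue directly that if $\RE = \SE$ then $E^{r+1} = U E^{r}$ forces, by comparing the degree-$(r+1)$ graded pieces of the symmetric algebra, that $\cS_{r+1}(E)$ is generated by products of the form $u \cdot \cS_r(E)$ with $u \in U$; but making this into a clean proof that $U = E$ seems to require more bookkeeping than the localization route, so I would keep the localization argument as the main line.
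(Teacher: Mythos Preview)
Your proof is correct and follows essentially the same line as the paper's: both reduce to the local case, compute $\lE=\mu(E)$ from the identification $\FE=\cS_{k}(E\otimes_R k)$, and then invoke \refp{red} to conclude. The only cosmetic difference is that the paper appeals directly to part~(d) of \refp{red} (so that $\mu(E)=\lE$ makes $E$ itself a minimal reduction, hence no proper reductions exist), whereas you pass through a minimal reduction $V\subseteq U$ and parts~(a),~(c) together with the fiber-cone characterization to force $V=E$; the content is the same.
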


\begin{proof}
    Assume first that $\Rmk$ is local. Then, we have
    $$ \lE= \dim(\RE \oxr k) = \dim(\SE \oxr k) = \dim \cS_{k}(E
    \oxr k).$$
    Since $E\oxr k$ is a free $k$-module, $\rank(E\oxr
    k)=\dim_{k}(E\oxr k)=\mu(E)$, and so $\lE= \mu(E)$.
    By \refp{red}, $E$ is a minimal reduction of itself.
    Hence $r(E)=0$.

    Let now $U\subseteq E$ be a reduction of $E$. Then, $U_{\fm} \subseteq E_{\fm}$
    is a reduction of $E_{\fm}$ for any maximal ideal $\fm\subset R$, and by the
    local case, $U_{\fm} = E_{\fm}$. Therefore, $U=E$.
\end{proof}

Any reduction $U$ of $E$ has rank and $\rank U = \rank E$. Namely,

\begin{proposition}\label{pred}
    Let $R$ be a \nor, $E$ a \fgtfrmo having rank and $U \sube E$
    be a reduction of $E$.
    \begin{enumerate}
        \item  $U$ has rank and $\rank U = \rank E$;

%
%
%
        \item  $\grd E/U >0$.


    \end{enumerate}
\end{proposition}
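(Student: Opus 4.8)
The plan is to reduce both statements to the localization at the generic points of $R$, where a reduction necessarily has the same rank as the ambient module, and then to use this rank equality to control the quotient $E/U$. For part (a), recall that $U$ has rank $e$ means $U \oxr Q \simeq Q^{e}$, where $Q = \Quot(R)$. Since $U$ is a reduction of $E$, after tensoring with $Q$ we have that $U \oxr Q$ is a reduction of $E \oxr Q$ (reductions localize, as noted in the excerpt). But $E$ has rank $e$, so $E \oxr Q \simeq Q^{e}$ is a free $Q$-module; being a finite product of fields, $Q$ is a ring over which finitely generated modules behave well, and a free module over such a ring has no proper reductions. Indeed, applying \refc{as01} to the (Noetherian) ring $Q$ and the free $Q$-module $E \oxr Q$, we conclude $U \oxr Q = E \oxr Q \simeq Q^{e}$. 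Hence $U$ has rank, and $\rank U = e = \rank E$. One must be slightly careful that $Q$ may not be a domain, only a finite product of Artinian local rings, but \refc{as01} is stated for an arbitrary Noetherian ring, and a finitely generated module over $Q$ is free if and only if it is free after localizing at each of the finitely many minimal primes, so the argument goes through componentwise.

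For part (b), I want to show $\grade E/U > 0$, i.e. that $E/U$ has no associated prime of grade $0$; equivalently, $E/U$ has positive grade means there is a nonzerodivisor of $R$ annihilating nothing problematic — more precisely, $\grade E/U > 0$ iff $(E/U)_{\frp} = 0$ for every $\frp \in \Ass R$ (the primes of grade $0$), since $\grade M = 0$ iff $\ann M$ is contained in some associated prime of $R$. So it suffices to check that for each minimal/associated prime $\frp$ of $R$ with $\depth R_{\frp} = 0$, we have $U_{\frp} = E_{\frp}$. Now $R_{\frp}$ is a zero-dimensional local ring, $E_{\frp}$ is a finitely generated torsionfree $R_{\frp}$-module having rank $e$, and over a zero-dimensional ring (where every element is either a unit or a zerodivisor) a torsionfree module of rank $e$ is forced to be free of rank $e$: indeed $E_{\frp} \hookrightarrow E_{\frp} \oxr \Quot(R_{\frp}) = E_{\frp} \oxr R_{\frp} \simeq R_{\frp}^{e}$, the first map being an isomorphism since torsionfreeness over an Artinian local ring forces the total quotient ring to equal the ring. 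Thus $E_{\frp}$ is free, $U_{\frp}$ is a reduction of it (reductions localize), and by \refc{as01} again $U_{\frp} = E_{\frp}$, so $(E/U)_{\frp} = 0$.

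Assembling: since $(E/U)_{\frp} = 0$ for every $\frp \in \Ass R$, no associated prime of $R$ lies in $\supp(E/U) = V(\ann E/U)$, hence $\ann E/U$ contains a nonzerodivisor of $R$, which is exactly the statement $\grade E/U > 0$.

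The main obstacle, and the point that needs the most care, is the passage to the generic fiber when $R$ is not a domain: one has to argue componentwise over $\Quot(R)$ (or, equivalently, over each minimal prime of $R$) that a torsionfree rank-$e$ module becomes free, and that a reduction remains a reduction after this base change — both facts are available from the material already developed (reductions are stable under the localizations and base changes listed before \refp{red}, and \refc{as01} covers the freeness conclusion), but keeping track of the fact that $\Quot(R)$ is only a finite product of Artinian local rings rather than a field is where a careless argument would break. Everything else is a direct application of \refc{as01} localized at the appropriate primes.
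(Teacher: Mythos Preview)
Your argument is correct and follows essentially the same route as the paper: pass to $Q=\Quot(R)$, use \refc{as01} to get $U\otimes Q=E\otimes Q$, and then deduce $(E/U)_{\frp}=0$ for every $\frp\in\Ass R$, whence $\ann(E/U)\not\subseteq \bigcup_{\frp\in\Ass R}\frp=\z(R)$.

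One small slip to clean up: for $\frp\in\Ass R$ you assert that $R_{\frp}$ is ``zero-dimensional'' (and later ``Artinian''), but this fails for embedded associated primes. What you actually need, and what you in fact use, is only that $\depth R_{\frp}=0$, equivalently that every non-unit of $R_{\frp}$ is a zerodivisor, so $\Quot(R_{\frp})=R_{\frp}$. With that, $R_{\frp}$ is a localization of $Q$ (since $\frp$ misses the nonzerodivisors of $R$), hence $E_{\frp}\simeq (E\otimes_R Q)_{\frp Q}\simeq R_{\frp}^{e}$; this simultaneously justifies your claim that $E_{\frp}$ has rank $e$ over $R_{\frp}$ and makes the freeness immediate. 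The paper's proof is terser precisely because it reads this off directly from part (a) rather than re-deriving it locally.
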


\begin{proof}
a)  If  $\rank E = e$ and $Q= \Quot(R)$, then $E \oxr Q \simeq
Q^{e}$ and by \refc{as01} $E \oxr Q = U \oxr Q$, proving that
$\rank E = \rank U$.


%

b)  We have $(E/U)_{\frp} \simeq E_{\frp}/U_{\frp} = 0$
    for all $\frp \in \Ass R$. But
    $$   (E/U)_{\frp}=0  \LRa \frp \not \in \supp E/U =
    V(\ann_{R}(E/U)) \LRa \ann_{R}(E/U) \nsubseteq \frp. $$
    It follows that $\ann_{R}(E/U) \nsubseteq \bigcup_{\frp \in \Ass
    R}\frp = \z(R)$, and so $\grd E/U >0$.
%
\end{proof}

We close this section mentioning the upper and lower bounds for
the analytic spread obtained in \cite{suv} and deducing two easy
consequences.

\begin{proposition} [{\cite[Proposition~2.3]{suv}}]\label{bas}
    Let $\Rmk$ be a \nol{} of dimension $d >0$ and $E$ a \fgr{}
    having rank $e$. Then $e \leq \lE \leq d + e -1$.
\end{proposition}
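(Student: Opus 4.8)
The plan is to derive both inequalities from one structural input: that $\dim\RE = d+e$ for any module of rank $e$ over a $d$-dimensional Noetherian local ring (cf.\ \cite{suv}). Granting this, both bounds for $\lE = \dim\cF(E) = \dim(\RE/\fm\RE)$ follow from elementary dimension theory.

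For the lower bound I would pick a system of parameters $x_1,\dots,x_d$ of $R$; then $(x_1,\dots,x_d)\RE$ and $\fm\RE$ have the same radical in $\RE$, so $\lE = \dim\bigl(\RE/(x_1,\dots,x_d)\RE\bigr)$, and Krull's principal ideal theorem applied to these $d$ elements of $\RE$ gives $\lE \ge \dim\RE - d = e$. Alternatively one may argue reduction-theoretically: by \refp{pred} a minimal reduction $V\subseteq E$ has $\rank V = e$, whence $\mu(V)\ge e$ after localising $V$ at a minimal prime of $R$, while $\mu(V)=\lE$ by \refp{red}.

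For the upper bound I would exploit that $\RE\cong\SE/\tau_R(\SE)$ is torsionfree over $R$, so no nonzerodivisor of $R$ is a zerodivisor on $\RE$. Since every element of a minimal prime of a Noetherian ring is a zerodivisor, each minimal prime $\frq$ of $\RE$ contains no nonzerodivisor of $R$, hence survives in the localisation $\RE\otimes_R Q = Q[t_1,\dots,t_e]$, with $Q=\Quot(R)$, as a minimal prime; as $Q[t_1,\dots,t_e]$ is flat over $Q$, such a minimal prime contracts to a minimal prime of $R$, so $\frq\cap R\in\Min R$. Since $d\ge 1$, the maximal ideal $\fm$ is not a minimal prime of $R$; therefore $\fm\RE$ is contained in no minimal prime of $\RE$, so $V(\fm\RE)$ contains no irreducible component of $\spec(\RE)$, and consequently $\lE = \dim(\RE/\fm\RE) < \dim\RE = d+e$, that is, $\lE\le d+e-1$.

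The real obstacle is the input $\dim\RE = d+e$, and within it the estimate $\dim\RE\le d+e$: unlike the equality $\dim R[t_1,\dots,t_e]=d+e$, this cannot be read off from the inclusion $\RE\subseteq R[t_1,\dots,t_e]$, since a subring can have larger Krull dimension than the ambient ring. I would obtain it by reducing modulo a minimal prime $\frp$ of $R$ (using that $\RE$, modulo the extension of $\frp$, maps onto the Rees algebra of the corresponding rank-$e$ module over the domain $R/\frp$) and then comparing transcendence degrees of function fields, or by analysing the graded structure of $\RE$ directly; in either form this is the technical heart of the statement, and it is carried out in \cite{suv}.
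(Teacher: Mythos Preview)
The paper does not prove this proposition at all: it is quoted from \cite[Proposition~2.3]{suv} and used as a black box, so there is no argument here to compare yours against. Your proposal in effect supplies what the paper omits, while still leaning on \cite{suv} for the structural input $\dim\RE=d+e$.

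Your derivations from that input are essentially correct, with one caveat. For the upper bound, the step ``$\fm\RE$ lies in no minimal prime of $\RE$, hence $\dim(\RE/\fm\RE)<\dim\RE$'' is valid in any Noetherian ring of finite dimension: a maximal chain of primes containing $\fm\RE$ can always be extended one step downward, since its bottom member is not a minimal prime. For the lower bound, however, the inequality $\dim\bigl(A/(x_1,\dots,x_d)\bigr)\ge\dim A-d$ does \emph{not} follow from Krull's height theorem for an arbitrary Noetherian ring $A$: with $A=R[t]$ over a DVR $(R,\pi)$ and $a=\pi t-1$, one has $\dim A/(a)=0<\dim A-1$. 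What rescues your argument is that the $x_i$ lie in degree zero and $\RE$ is positively graded with $(\RE)_0=R$ local, so both $\dim\RE$ and $\dim\cF(E)$ are realised at the homogeneous maximal ideal $\mathfrak{M}=\fm\oplus\RE_{+}$; after localising at $\mathfrak{M}$ the local form of the inequality applies. You should make this step explicit. Your alternative route via $\mu(V)\ge\rank V=e$ for a minimal reduction $V$ is cleaner, but note that \refp{red} and \refp{pred} are stated only for torsionfree $E$ and require infinite residue field to get $\mu(V)=\lE$, whereas the proposition as stated assumes neither; a Nagata extension handles the residue field, but the torsionfree hypothesis is a genuine discrepancy in generality that you should acknowledge.
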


\begin{corollary} \label{bas1}
    Let $\Rmk$ be a \nol{} of dimension $d >0$ with infinite residue
    field and let $E$ a \fgtfrmo having rank $e$. Then $\lE =e$ \sse
    any minimal reduction of $E$ is a free \rmo.
\end{corollary}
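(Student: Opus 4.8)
The plan is to peel the statement down to the elementary fact that a finitely generated torsionfree $R$-module whose minimal number of generators equals its rank is free; the two preceding propositions then supply all the bookkeeping. The inputs I would use are: since $k$ is infinite, part~e) of \refp{red} says that \emph{every} minimal reduction $V$ of $E$ satisfies $\mu(V)=\lE$; and by \refp{pred}(a) every reduction of $E$ — in particular every minimal reduction — has rank equal to $\rank E=e$. I would also recall, as noted right after \refp{red}, that minimal reductions of $E$ always exist, so that the asserted equivalence is not vacuous.

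For the implication ``$\Leftarrow$'' I would fix any minimal reduction $V$ of $E$; by hypothesis $V\simeq R^{m}$ for some $m$, and then $m=\mu(V)$. On one side $m=\mu(V)=\lE$ by \refp{red}(e), on the other $m=\rank R^{m}=\rank V=e$ by \refp{pred}(a); hence $\lE=e$.

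For ``$\Rightarrow$'' I would assume $\lE=e$ and let $V$ be an arbitrary minimal reduction of $E$. By \refp{red}(e) we have $\mu(V)=\lE=e$, so a choice of minimal generators of $V$ yields a short exact sequence $0\to K\to R^{e}\overset{\pi}{\to}V\to 0$. Tensoring with $Q=\Quot(R)$ (flat over $R$) and using $\rank V=e$, i.e. $V\oxr Q\simeq Q^{e}$, turns $\pi$ into a surjection $Q^{e}\tra Q^{e}$; by the standard fact that a surjective endomorphism of a finitely generated module over a commutative ring is an isomorphism, $K\oxr Q=0$. But $K\sube R^{e}$ is torsionfree, so it embeds into $K\oxr Q=0$, whence $K=0$ and $V\simeq R^{e}$ is free. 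Since $V$ was arbitrary, every minimal reduction of $E$ is free.

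The main obstacle — really a matter of care rather than of depth — is the freeness step in the second implication: one must read ``$V$ has rank $e$'' as ``$V\oxr Q$ is a free $Q$-module of rank $e$'', invoke the surjective-endomorphism property over the (possibly non-Noetherian) ring $Q$, and use that a submodule of a free module is torsionfree and hence injects into its localization at $Q$. Everything else is a direct bookkeeping of \refp{red} and \refp{pred}; the hypothesis $d>0$ is used only to place the statement in the setting of \refp{bas} and plays no further role in the argument.
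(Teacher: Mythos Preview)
Your proof is correct and follows essentially the same route as the paper's: both reduce the equivalence to the chain $\mu(V)=\lE$ (from \refp{red}(e), using that $k$ is infinite) and $\rank V=e$ (from \refp{pred}(a)), so that $\lE=e$ is equivalent to $\mu(V)=\rank V$, which for a torsionfree module over a local ring is equivalent to freeness. The only difference is that the paper records the inequality $\lE\geq e$ from \refp{bas} and then states the last equivalence as a known fact, whereas you supply an explicit argument via the short exact sequence and the surjective-endomorphism trick over $Q$; this is a matter of detail, not of strategy.
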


\begin{proof}
    Let $U$ be a minimal reduction of $E$. According to
    \refp{red} and to \refp{bas}
    $$ \mu(U) = \ell(E) \geq e = \rank U.$$
    Hence $\lE =e$ \sse $\rank U = \mu(U)$ \sse $U$ is a free \rmo.
\end{proof}

\begin{corollary}\label{bas2}
    Let $\Rmk$ be a \nol{} of dimension $1$ with infinite residue
    field and let $E$ a \fgtfrmo having rank $e$. Then any minimal
    reduction of $E$ is a free \rmo.
\end{corollary}

\smallbreak
\section{The support of $G/E$ and ideal modules}\label{sec:supp}

\noindent Given a \fgtfrmo $E$ having positive rank $e$, $E$
affords an embedding into a free module of the same rank, $E \sube
G \simeq R^e$. The aim of this section is to realize the support
of $G/E$ as the variety of a certain ideal. To do this we first
establish the inclusions
$$ \supp E/U \sube V(F_{e}(E)) \sube \supp G/U,$$
where $F_{e}(E)$ is the $e$-Fitting ideal of $E$ and $U$ is a
reduction of $E$. As we will see, equality in the second inclusion
holds when $\grd G/E \geq 2$,
which determine a class of modules already introduced in \cite{suv} that
afford a natural embedding into a free module of the same rank,
the class of ideal modules. This type of modules behave similarly
to an ideal, as we show. \medbreak

Recall that $F_{i}(E):=I_{n-i}(\varphi)$ is the ideal generated by
the $(n-i)\x (n-i)$ minors of $\varphi$ where $R^m
\overset{\varphi}{\ra} R^n \ra E \ra 0$ is a finite presentation
of $E$, $0 \leq i \leq n$. If $E$ has positive rank $e$ then
$$ F_{0}(E)= \cdots = F_{e-1}(E)=(0) \subn F_e(E) \cdots \sube F_{i}(E) \sube \cdots \sube R.$$
Moreover, $\supp E = V(F_0(E))$ and, for every $\frp \in \sR$,
$\mu(E_{\frp})=n$ \sse $F_{n-1}(E) \sube \frp$ and $F_{n}(E) \nsub
\frp$. \medbreak

Using these properties and the fact that, when $\Rm$ is local, a
finitely generated module $E$ with rank $e>0$ is free if and only
if $\mu (E)=e$, one immediately gets that if $R$ is a Noetherian
ring and $E$ a \fg{} \rmo{} having rank $e >0$, then the free
locus of $E$ is given by $\sR \setminus V(F_{e}(E))$, and
coincides with the locus of prime ideals $\frp \in \sR$ such that
$\mu(E_{\frp}) \leq e$. In particular, if $\Rm$ is also a local
ring, then $ E \text{ is free \sse } F_{e}(E) = R$. \smallbreak

Applying the above facts we observe the following about $\grd
F_e(E)$. For its proof just use that $\grd F_{e}(E) = \inf \{
\depth R_{\frp} \mid \frp \in V(F_{e}(E)) \}>0$. \smallbreak

\begin{lemma}\label{g2}
    Let $E$ be a \fg{} module over a \nor{} $R$ having positive rank
    $e$. Then:
    \begin{enumerate}
        \item  $\grd F_{e}(E) > 0$.

        \item  $\grd F_{e}(E) \geq 2$ \sse $\mu(E_{\frp}) \leq e$ for every
        $\frp \in \sR$ whenever $\depth R_{\frp}= 1$.

        \item  If $\grd F_{e}(E) \geq 2$ then $E_{\frp}$ is free whenever $\dpt R_{\frp} \leq 1$.
    \end{enumerate}
\end{lemma}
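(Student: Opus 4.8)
The plan is to reduce all three items to the formula $\grd F_{e}(E) = \inf \{ \depth R_{\frp} \mid \frp \in V(F_{e}(E)) \}$, valid for an arbitrary proper ideal of the \nor{} $R$ (with the convention $\inf\emptyset=+\infty$), together with the description of $V(F_{e}(E))$ as the non-free locus of $E$ recalled just above: for $\frp\in\sR$ one has $E_{\frp}$ free $\LRa \mu(E_{\frp})=e \LRa F_{e}(E)\nsub\frp$, because $F_{e-1}(E)=0$ and $\mu(E_{\frp})\geq\rank E_{\frp}=e$. Hence $V(F_{e}(E))=\{\frp\in\sR \mid E_{\frp}\text{ is not free}\}$, and each statement becomes a question about the possible depths of primes at which $E$ fails to be free.

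For (a) I would first observe that $V(F_{e}(E))\cap\Ass R=\emptyset$: if $\depth R_{\frp}=0$ then $\frp\in\Ass R$, so $\frp\sube\z(R)$; thus $R_{\frp}$ is a localization of the total ring of fractions $Q=\Quot(R)$, and localizing the isomorphism $E\oxr Q\simeq Q^{e}$ coming from the rank hypothesis gives $E_{\frp}\simeq R_{\frp}^{e}$, so $E_{\frp}$ is free and $\frp\notin V(F_{e}(E))$. Therefore every $\frp\in V(F_{e}(E))$ has $\depth R_{\frp}\geq 1$, and the displayed formula gives $\grd F_{e}(E)\geq 1>0$ (the grade being $+\infty$ in case $F_{e}(E)=R$).

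For (c): assuming $\grd F_{e}(E)\geq 2$ and $\depth R_{\frp}\leq 1$, if $E_{\frp}$ were not free we would have $\frp\in V(F_{e}(E))$ and hence $2\leq\grd F_{e}(E)\leq\depth R_{\frp}\leq 1$, a contradiction; so $E_{\frp}$ is free. For (b), the implication $\Ra$ is just (c) applied to a prime $\frp$ with $\depth R_{\frp}=1$, which then forces $E_{\frp}$ free and $\mu(E_{\frp})=e$. Conversely, if $\mu(E_{\frp})\leq e$ for every $\frp\in\sR$ with $\depth R_{\frp}=1$, then for $\frp\in V(F_{e}(E))$ the module $E_{\frp}$ is not free, so $\mu(E_{\frp})>e$; hence $\depth R_{\frp}\neq 1$ by hypothesis and $\depth R_{\frp}\neq 0$ by the computation in (a), so $\depth R_{\frp}\geq 2$, and taking the infimum over $V(F_{e}(E))$ yields $\grd F_{e}(E)\geq 2$.

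I do not expect a genuine obstacle here, since the proof is essentially the assembly of facts collected before the lemma; the only points that need a little care are the two boundary cases — reading the grade formula correctly when $F_{e}(E)=R$ (empty locus, grade $+\infty$), and the verification that $E_{\frp}$ is free at the associated primes of $R$, which is exactly where the hypothesis that $E$ has positive rank enters.
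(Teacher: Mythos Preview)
Your proposal is correct and follows exactly the approach indicated in the paper: the paper does not spell out a proof but merely states that one should use the identity $\grd F_{e}(E) = \inf \{ \depth R_{\frp} \mid \frp \in V(F_{e}(E)) \}$ together with the description of $V(F_e(E))$ as the non-free locus of $E$ given just before the lemma. Your write-up is simply a careful unpacking of that hint, including the verification that $E_{\frp}$ is free at associated primes of $R$ (which is where the rank hypothesis is used), so there is nothing to add or correct.
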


Next we prove that $\grd E/U \geq \grd F_{e}(E)$ for any reduction
$U$ of $E$, $e= \rank E >0$.

\begin{proposition}\label{nfl0}
     Let $R$ be a \nor, $E$ a \fg{} torsionfree \rmo{} having rank $e >0$ and $U$
     a reduction of $E$. Then $\supp E/U \sube V(F_{e}(E))$.
\end{proposition}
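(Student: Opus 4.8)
The plan is to reduce everything to the local case and then compare minimal numbers of generators. Since both $\supp E/U$ and $V(F_e(E))$ are defined locally, it suffices to show: for $\frp \in \sR$, if $\frp \notin V(F_e(E))$, then $\frp \notin \supp E/U$, i.e. $E_\frp = U_\frp$. By the discussion preceding \refl{g2}, $\frp \notin V(F_e(E))$ exactly means $E_\frp$ is free, and in fact $\mu(E_\frp) = e = \rank E_\frp$. Localizing at $\frp$, we are reduced to the situation of a Noetherian local ring $R_\frp$, a free module $E_\frp$ of rank $e$, and a reduction $U_\frp \sube E_\frp$ (using that a reduction localizes to a reduction, as recalled in section \ref{sec:rm}).

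The key step is then precisely \refc{as01}: a free module over a Noetherian ring has no proper reductions. Applying this with the Noetherian local ring $R_\frp$ and the free module $E_\frp$, any reduction $U_\frp$ of $E_\frp$ must equal $E_\frp$. Hence $(E/U)_\frp \simeq E_\frp / U_\frp = 0$, so $\frp \notin \supp E/U$. This gives the contrapositive of the desired inclusion $\supp E/U \sube V(F_e(E))$.

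There is essentially no hard part here: the statement is a direct packaging of the locally-free locus description of $V(F_e(E))$ together with \refc{as01}. The only point requiring a word of care is that reductions behave well under localization — one must invoke the remark in section \ref{sec:rm} that if $U$ is a reduction of $E$ then $U \oxr R_\frp$ is a reduction of $E \oxr R_\frp$ — and that $F_e(E)_\frp = F_e(E_\frp)$, so that $\frp \notin V(F_e(E))$ really does force $E_\frp$ free. Once these are noted the argument is two lines. I would also remark at the end (as the text following the proposition evidently intends) that the inclusion $V(F_e(E)) \sube \supp G/U$ follows from the inclusion $V(F_e(E)) \sube \supp G/E$ together with $\supp G/E \supseteq \supp G/U$, which is immediate since $U \sube E$ gives a surjection $G/U \tra G/E$.
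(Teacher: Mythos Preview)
Your proof of the proposition itself is correct and essentially identical to the paper's: take $\frp\notin V(F_e(E))$, so $E_\frp$ is free, and apply \refc{as01} to conclude $U_\frp=E_\frp$.

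One small correction to your closing aside (which is not part of this proposition but of the next one): the surjection $G/U\tra G/E$ yields $\supp G/E\sube\supp G/U$, not $\supp G/E\supseteq\supp G/U$ as you wrote. With the inclusion in the right direction, your sketch does give $V(F_e(E))\sube\supp G/E\sube\supp G/U$, which is what \refp{nUfl} records.
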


\begin{proof}
    Let $\frp \not \in V(F_{e}(E))$.
    Hence $E_{\frp}$ is free and, since
    $U_{\frp}$ is a reduction of $E_{\frp}$, we must have $U_{\frp} =
    E_{\frp}$ (because a free module
    has no proper reductions). Therefore $\frp \not \in \supp E/U$.
    It follows that $\supp E/U \sube V(F_{e}(E))$.
\end{proof}

Let $E$ be a \fgtfrmo having rank $e >0$.
Hence $E$ is a submodule of a free
$R$-module $G \simeq R^e$, and so for any reduction
$U$ of $E$, $U \sube E \sube G$. In this case, we show that
$V(F_{e}(E)) \sube \supp G/U= \supp G/E$ with equality if $\grd G/U \geq 2$.

\begin{proposition}\label{nUfl}
    Let $R$ be a \nor, $E$ a \fgtfrmo having rank $e >0$
    and $U$ a reduction of $E$. Suppose that $E \sube G \simeq R^e$.
    Then
    \begin{enumerate}
        \item  $\supp E/U \sube V(F_{e}(E)) \sube \supp G/U = \supp G/E$;

        \item  $\grd E/U \geq \grd F_{e}(E) \geq \grd G/U = \grd G/E$.
    \end{enumerate}
\end{proposition}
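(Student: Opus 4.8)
The statement has two parts. Part (a) asserts the chain of inclusions
$\supp E/U \sube V(F_{e}(E)) \sube \supp G/U = \supp G/E$, and part (b) the corresponding chain of inequalities for grade. I would handle (a) first and then deduce (b) locally at primes realizing the grade.

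\textbf{Part (a).} The first inclusion $\supp E/U \sube V(F_{e}(E))$ is exactly \refp{nfl0}, so nothing new is needed there. For the equality $\supp G/U = \supp G/E$: since $U \sube E \sube G$, there is a surjection $G/U \tra G/E$, giving $\supp G/E \sube \supp G/U$ at once. For the reverse, I localize at a prime $\frp$ with $E_{\frp}=G_{\frp}$; by \refp{pred}(a) (or rather \refc{as01} applied over $R_{\frp}$), a reduction of a free module is the whole module, so $U_{\frp}=E_{\frp}=G_{\frp}$, hence $\frp \notin \supp G/U$. This shows $\supp G/U \sube \supp G/E$, and combined with $\supp E = \supp G$ locally being controlled by $F_e$ — more precisely, $\frp \notin \supp G/E$ iff $E_{\frp}$ is free iff $F_e(E) \nsub \frp$ (using that a finitely generated module of rank $e$ over a local ring is free iff $\mu = e$ iff $F_e = R$, as recalled right before \refl{g2}) — we get $V(F_e(E)) = \supp G/E$. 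In particular $V(F_e(E)) \sube \supp G/U$. Chaining these gives (a). The only slightly delicate point is making sure the identification $V(F_e(E)) = \supp G/E$ is used correctly: it relies on the fact, stated in the excerpt, that the free locus of $E$ is $\sR \setminus V(F_e(E))$, together with $\grd G/E$ possibly being small — but this identification does not need $\grd G/E \geq 2$; that hypothesis is only needed to get equality $V(F_e(E)) = \supp G/E$ versus just an inclusion into $\supp G/E$. So I should double-check whether the intended claim is $V(F_e(E)) \sube \supp G/E$ with equality exactly when $\grd G/E \geq 2$, consistent with the surrounding text; the inclusion $V(F_e(E)) \sube \supp G/U = \supp G/E$ holds unconditionally and that is all part (a) asserts.

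\textbf{Part (b).} Each of the three quantities is an infimum of depths: for a module $M$ with $\grd M = \grade(\ann M)$, one has $\grd M = \inf\{\depth R_{\frp} \mid \frp \in \supp M\}$. Applying this to $M = E/U$, $M = R/F_e(E)$ (whose support is $V(F_e(E))$), and $M = G/U = G/E$, the inclusions of supports from part (a) immediately reverse to give inequalities of infima: a larger support set can only make the infimum smaller or equal. Hence $\grd E/U \geq \grd F_e(E) \geq \grd G/U = \grd G/E$. The equality $\grd G/U = \grd G/E$ follows from $\supp G/U = \supp G/E$, or more directly from $G/U = G/E$ — wait, that's false in general, $U \neq E$; but the supports agree, which is what the grade formula needs.

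\textbf{Main obstacle.} The real content is entirely in part (a), and within it the reverse inclusion $\supp G/U \sube \supp G/E$, i.e. showing a reduction $U$ of $E$ cannot be proper at a prime where $E$ is already free. This is the one place where the theory of reductions enters (via \refc{as01}: a free module has no proper reductions), as opposed to pure Fitting-ideal bookkeeping. Once that localization argument is in place, everything else is formal manipulation with support and the grade-as-infimum-of-depths formula. I do not anticipate any genuinely hard step; the proof should be short.
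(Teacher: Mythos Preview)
Your proposal is correct and follows essentially the same line as the paper's proof: both reduce everything to the fact that at a prime where $E_{\frp}$ is free one has $U_{\frp}=E_{\frp}=G_{\frp}$ (via \refc{as01}), and then read off part (b) from part (a) using the grade-as-infimum-of-depths formula. The only organizational difference is that the paper obtains $\supp G/U = \supp G/E$ from the exact sequence $0 \ra E/U \ra G/U \ra G/E \ra 0$ (giving $\supp G/U = \supp E/U \cup \supp G/E$ and then absorbing $\supp E/U$ via $V(F_e(E))$), whereas you argue the two inclusions directly; both are short and use the same ingredients.
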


\begin{proof}
    Let $U$ be a reduction of $E$.
    We first note that if $U=G$ then $U=E=G$, and so $G/U=0$ and
    $F_{e}(E)=R$. In this case, the formula reads
    $V(R)=\supp(0) = \emptyset$. Hence, we may assume that $U \neq G$.
    Let $\frp \in \sR$ be arbitrary. Since
    \begin{align*}
        \frp \not \in \supp G/U & \iff G_{\frp}/U_{\frp}=
        (G/U)_{\frp}=0 \iff U_{\frp} = E_{\frp} = G_{\frp} \\
        & \implies E_{\frp} \text{ is free } \iff F_{e}(E)
        \nsubseteq \frp,
    \end{align*}
    then $V(F_{e}(E)) \sube \supp G/U$. We also get $V(F_{e}(E)) \sube \supp
    G/E$, hence
    $$ \supp E/U \sube V(F_{e}(E)) \sube \supp G/E. $$
    Now from the exact sequence $0 \ra E/U \ra G/U \ra G/E \ra 0$
    we have
    $$\supp G/U = \supp E/U \cup \supp G/E = \supp G/E,$$
    and a) is proved.
    b) follows as a direct consequence of a).
\end{proof}

In the situation of \refp{nUfl}, we observe that if $E_{\frp}$ is free then
$E_{\frp} \simeq R_{\frp}^e \simeq G_{\frp}$; however, in general,
we may have $E_{\frp} \subn G_{\frp}$. Next we give a sufficient
condition to guarantee the equality.

\begin{lemma}\label{grd>2}
    Let $R$ be a \nor, $F \sube G$ \fg{} free \rmo s having the same
    rank. If $\grd G/F \geq 2$ then $F=G$.
\end{lemma}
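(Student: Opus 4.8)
The statement is that if $F \sube G$ are finitely generated free $R$-modules of the same rank and $\grd G/F \geq 2$, then $F = G$. The plan is to argue that the cokernel $G/F$ must be zero by analyzing the determinant of the inclusion map. Write $r$ for the common rank and fix bases of $F$ and $G$, so that the inclusion $F \hra G$ is represented by an $r \times r$ matrix $A$ over $R$. Then $F = G$ if and only if $A$ is invertible over $R$, i.e. $d := \det A$ is a unit. The key observation is that $G/F$ has a presentation $R^r \overset{A}{\ra} R^r \ra G/F \ra 0$, so $F_0(G/F) = I_r(A) = (d)$; equivalently $\ann_R(G/F)$ and $(d)$ have the same radical (indeed by Cramer's rule $d \cdot G \sube F$, so $d \in \ann_R(G/F)$, while $\ann_R(G/F) \sube \sqrt{(d)}$ follows from the Fitting ideal containment). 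Consequently $\supp G/F = V(d)$ and hence $\grd(G/F) = \grd (d) $.

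Now I would invoke the characterization of grade used just above in the excerpt (as in the proof of \refl{g2}): $\grd (d) = \inf\{\depth R_{\frp} \mid \frp \in V(d)\}$. The hypothesis $\grd G/F \geq 2$ therefore says that $V(d)$ contains no prime $\frp$ with $\depth R_{\frp} \leq 1$; in particular $d$ lies in no associated prime of $R$ and in no prime of depth $1$. If $d$ is not a unit, then $V(d)$ is non-empty, so pick $\frp$ minimal over $(d)$. Since $d$ is a nonzerodivisor (it avoids all associated primes of $R$), $\Ht \frp \geq 1$ by Krull's principal ideal theorem, and in fact $d$ being a nonzerodivisor means $\grade(d) \geq 1$; combined with $\grd(d) \geq 2$ this should force a contradiction once localized at a suitable prime. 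More precisely: choose $\frp \in V(d)$ with $\depth R_{\frp}$ as small as possible among primes in $V(d)$; this $\depth$ is $\geq 2$ by hypothesis. But $d \in \frp R_{\frp}$ is a nonzerodivisor on $R_{\frp}$, so $\depth R_{\frp}/(d) = \depth R_{\frp} - 1 \geq 1$, which means $\frp R_{\frp} \notin \Ass(R_{\frp}/(d))$ — yet $\frp$ was chosen minimal over $(d)$ (after shrinking), so $\frp R_{\frp} \in \Ass(R_{\frp}/(d))$, a contradiction. Hence $d$ is a unit and $F = G$.

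The one point requiring a little care — and the main obstacle — is the passage from "$\grd G/F \geq 2$" to the depth condition on primes in $V(d)$ and the selection of the right prime to localize at: I need $V(d) = \supp G/F$ (which is the Fitting-ideal computation above) and I need the grade-as-infimum-of-local-depths formula, both of which are available from the discussion preceding \refl{g2}. An alternative, possibly cleaner, route avoiding Cramer entirely: from $0 \ra F \ra G \ra G/F \ra 0$ and $\grd G/F \geq 2$ one gets $\Ext^0_R(G/F,R) = \Ext^1_R(G/F,R) = 0$, hence applying $\Hom_R(-,R)$ yields $\Hom_R(G,R) \overset{\sim}{\ra} \Hom_R(F,R)$; dualizing again (both $F$, $G$ are free, hence reflexive) recovers $F \overset{\sim}{\ra} G$, so the inclusion is an isomorphism. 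I would present this second argument as the main proof since it is short and uses exactly the hypothesis, and mention the determinant viewpoint only if needed.
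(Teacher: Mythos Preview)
Your proposal is correct; both arguments you sketch go through. The dualizing argument is clean and uses exactly the hypothesis $\grd G/F\geq 2$ via the vanishing of $\Ext^0$ and $\Ext^1$; note that this is precisely the method the paper employs a few lines later in \refl{idmG}, so presenting it here would nicely anticipate that lemma. In your determinant argument the exposition wobbles a bit (you first pick $\frp$ of minimal depth in $V(d)$, then treat it as minimal over $(d)$); the clean version is simply to take $\frp$ minimal over $(d)$, observe $d$ is a nonzerodivisor in $R_\frp$, and compare $\depth R_\frp/(d)=\depth R_\frp-1\geq 1$ with $\frp R_\frp\in\Ass(R_\frp/(d))$.

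The paper's own proof is different from both of yours and somewhat more direct: it picks $\frp\in\Ass G/F$, uses $\grd G/F\geq 2$ to get $\depth R_\frp\geq 2$, and then applies the depth lemma straight to the exact sequence $0\to F_\frp\to G_\frp\to (G/F)_\frp\to 0$. Since $F_\frp$ and $G_\frp$ are free of the same rank, both have depth $=\depth R_\frp\geq 2$, whence $\depth (G/F)_\frp\geq 1$, contradicting $\frp\in\Ass G/F$. Compared to your first route, this bypasses the Fitting ideal and determinant entirely; compared to your second route, it stays at the level of depth rather than duality. Each approach has its merits: the paper's is the shortest path, your determinant argument makes the obstruction ($\det A$ must be a unit) explicit, and your dualizing argument generalizes immediately to the non-free setting of \refl{idmG}.
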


\begin{proof}
    Suppose that $F \subsetneq G$. Hence $\Ass G/F \neq \emptyset$.
    Let $\frp \in \Ass G/F$. Since $\grd G/F$ $\geq 2$ then $\dpt
    R_{\frp} \geq 2$. Supposing $\rank F = e = \rank G$ then
    $F_{\frp} \simeq R_{\frp}^e \simeq G_{\frp}$, and so
    $$ \dpt F_{\frp} = \dpt G_{\frp} = \dpt R_{\frp} \geq 2.$$
    It follows that
    $$ \dpt G_{\frp}/F_{\frp} \geq \min \{ \dpt G_{\frp}, \dpt
    F_{\frp} -1 \} \geq 1,$$
    contradicting $\dpt
    G_{\frp}/F_{\frp} =0$. Hence $\Ass G/F=\emptyset$,
    and so $F=G$.
\end{proof}

\begin{proposition}\label{nfl}
    Let $R$ be a \nor, $E$ a \fgtfrmo having rank $e >0$
    and $U$ a reduction of $E$. Suppose that $E \sube G \simeq R^e$.
    If $\grd G/U \geq 2$ then
    \begin{enumerate}
        \item  $V(F_{e}(E)) = \supp G/U$;

        \item  $\grd G/U = \grd F_{e}(E)$.
    \end{enumerate}
\end{proposition}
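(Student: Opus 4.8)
The plan is to leverage the inclusions already established in \refp{nUfl} together with the new hypothesis $\grd G/U \geq 2$ to upgrade the inclusion $V(F_e(E)) \sube \supp G/U$ to an equality, and then read off b) as a consequence. First I would recall from \refp{nUfl} that we always have
\[
\supp E/U \sube V(F_e(E)) \sube \supp G/U = \supp G/E,
\]
so only the reverse inclusion $\supp G/U \sube V(F_e(E))$ needs proof. To get it, I would take a prime $\frp \notin V(F_e(E))$ and show $\frp \notin \supp G/U$, i.e. $G_\frp = U_\frp$. Since $\frp \notin V(F_e(E))$, the module $E_\frp$ is free (by the description of the free locus in terms of $F_e$), hence $E_\frp \simeq R_\frp^e \simeq G_\frp$; localizing the inclusion $U \sube E \sube G$ at $\frp$ gives $U_\frp \sube E_\frp \sube G_\frp$, and since $U_\frp$ is a reduction of $E_\frp$ which is free, \refc{as01} forces $U_\frp = E_\frp = G_\frp$. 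This shows $\frp \notin \supp G/U$, and together with \refp{nUfl} yields $V(F_e(E)) = \supp G/U$, proving a).

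For b), the inequality $\grd F_e(E) \geq \grd G/U$ already comes from \refp{nUfl}. The reverse inequality $\grd F_e(E) \leq \grd G/U$ is where the hypothesis $\grd G/U \geq 2$ genuinely enters. My approach is: by a), the two ideals $F_e(E)$ and $\ann_R(G/U)$ have the same radical (both cut out $\supp G/U$), so $\grd F_e(E) = \grade(\ann_R(G/U)) = \grd G/U$ using that grade depends only on the radical of the ideal. Alternatively, and more in the spirit of the earlier arguments, since $\grd F_e(E) = \inf\{\depth R_\frp \mid \frp \in V(F_e(E))\}$ and $\grd G/U = \inf\{\depth R_\frp \mid \frp \in \supp G/U\}$, the equality of the two varieties from a) gives the equality of the two grades directly. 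Either way b) is immediate once a) is in hand. (Note that \refl{grd>2} is the analogous statement for free modules and is available for comparison, but is not strictly needed for the module case here; it will presumably be used to rule out the degenerate situation $\grd G/U \geq 2$ with $U$ itself free of full rank inside $G$, forcing $U = G$ — but as observed in the proof of \refp{nUfl}, that case makes everything trivially empty.)

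The main obstacle, such as it is, is purely bookkeeping: one must be careful that $\grd G/U$ and $\grd F_e(E)$ are both computed as infima of depths over the \emph{same} set of primes, and that this set is nonempty exactly when $U \neq G$ — the case $U = G$ (equivalently $U = E = G$) being handled separately as in \refp{nUfl}, where both sides of a) are empty and the grade conventions make b) vacuous. No hard commutative algebra is involved beyond the free-locus description of $F_e$, the fact that free modules have no proper reductions (\refc{as01}), and the radical-invariance of grade.
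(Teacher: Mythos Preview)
There is a genuine gap in your argument for a). You correctly deduce $U_\frp = E_\frp$ from \refc{as01}, but the further equality $E_\frp = G_\frp$ does \emph{not} follow from the mere abstract isomorphism $E_\frp \simeq R_\frp^e \simeq G_\frp$. A free submodule of $G_\frp$ of full rank need not equal $G_\frp$: already for $e=1$, taking $G=R$ and $E=aR$ with $a$ a nonunit nonzerodivisor gives $E_\frp \simeq R_\frp = G_\frp$ for every $\frp$, yet $E_\frp \subsetneq G_\frp$ whenever $a\in\frp$. In that example $V(F_1(E))=\emptyset$ while $\supp G/E = V(a)\neq\emptyset$, so a) genuinely fails without the hypothesis. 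The paper flags exactly this point in the sentence following \refp{nUfl}.

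This is precisely where the hypothesis $\grd G/U\geq 2$ and \refl{grd>2} enter, contrary to your parenthetical dismissal of that lemma. The paper's proof localizes the grade condition to get $\grd G_\frp/U_\frp \geq 2$, and then \refl{grd>2} (applied to the free modules $U_\frp \subseteq G_\frp$ of equal rank) forces $U_\frp = G_\frp$. So the hypothesis is used in a), not in b); once a) is established, b) is immediate from the equality of supports, as you correctly argue. Your claim that ``the hypothesis genuinely enters'' only in the reverse inequality of b) is inconsistent with your own method for b), which derives both inequalities simultaneously from a).
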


\begin{proof}
    a) By \refp{nUfl}, $V(F_{e}(E)) \sube \supp G/U$. For the other
    inclusion let $\frp \in \sR \setminus V(F_{e}(E))$. Hence
    $E_{\frp}$ is free, and so
    $U_{\frp} = E_{\frp} \simeq
    R_{\frp}^e \simeq G_{\frp}$. By assumption
    $$ \grd G_{\frp}/U_{\frp} = \grd (G/U)_{\frp} \geq \grd G/U
    \geq 2. $$
    Hence, by the previous lemma, $U_{\frp}=G_{\frp}$ and so $\frp
    \not \in \supp G/U$. The equality holds.
    b) follows by a).
\end{proof}

\begin{theorem}\label{bI1}
    Let $R$ be a \nor, $E$ a \fgtfrmo having rank $e >0$.
    Suppose that $E \sube G \simeq R^e$.
    The following are equivalent:
    \begin{enumerate}
        \item  $\grd G/E \geq 2$;

        \item  $\grd G/U \geq 2$ for any reduction $U$ of $E$;

        \item  $\grd G/U \geq 2$ for some reduction $U$ of $E$.
    \end{enumerate}
    If any of the above conditions holds, $V(F_{e}(E))=V(F_{e}(U))=\supp G/E =$ \linebreak  $\supp G/U$ for any reduction $U$ of $E$; in particular, $\grd G/E = \grd F_{e}(E) = \grd F_{e}(U) = \grd G/U$.
\end{theorem}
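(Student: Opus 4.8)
The plan is to prove the cyclic chain of implications a) $\Rightarrow$ b) $\Rightarrow$ c) $\Rightarrow$ a), and then deduce the displayed equalities from \refp{nfl} together with \refp{nUfl}. The implication b) $\Rightarrow$ c) is immediate, since $E$ is a reduction of itself (with $r_E(E)=0$), so "for any reduction" trivially specializes to "for some reduction". For a) $\Rightarrow$ b), let $U$ be any reduction of $E$. By \refp{pred}(b) we have $\grd E/U > 0$, and by \refp{nUfl}(b) we have $\grd E/U \geq \grd F_e(E) \geq \grd G/U = \grd G/E$. Combining this last chain with the hypothesis $\grd G/E \geq 2$ already gives $\grd G/U = \grd G/E \geq 2$, so b) holds. (In fact \refp{nUfl}(a) shows $\supp G/U = \supp G/E$ outright, so $\grd G/U = \grd G/E$ with no extra work; this is the cleanest route.)

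For the remaining implication c) $\Rightarrow$ a): suppose $\grd G/U \geq 2$ for some reduction $U$ of $E$. Again $\supp G/U = \supp G/E$ by \refp{nUfl}(a), and since grade depends only on the radical of the annihilator — equivalently $\grd G/U = \inf\{\depth R_\frp \mid \frp \in \supp G/U\}$ — we conclude $\grd G/E = \grd G/U \geq 2$, which is a). So the whole cycle closes essentially by transporting the identity $\supp G/U = \supp G/E$ through the grade formula.

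Finally, assume one (hence all) of a), b), c) holds, and fix any reduction $U$ of $E$. Since $\grd G/U \geq 2$, \refp{nfl}(a) gives $V(F_e(E)) = \supp G/U$, and \refp{nUfl}(a) gives $\supp G/U = \supp G/E$; applying \refp{nfl}(a) also to the reduction $U$ of itself (note $U$ is torsionfree of rank $e$, sits inside $G$, and $\grd G/U \geq 2$) yields $V(F_e(U)) = \supp G/U$ as well. This establishes the four-way equality of closed sets $V(F_e(E)) = V(F_e(U)) = \supp G/E = \supp G/U$. The "in particular" statement on grades is then automatic, since for a finitely generated module $M$ over the Noetherian ring $R$ one has $\grd M = \grd(\ann_R M) = \grd\sqrt{\ann_R M}$, so equal support varieties force equal grades: $\grd G/E = \grd F_e(E) = \grd F_e(U) = \grd G/U$.

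I do not anticipate a genuine obstacle here; the theorem is a bookkeeping consequence of \refps{pred}, \ref{nUfl}, and \ref{nfl}. The only point requiring a moment's care is making sure the hypotheses of \refp{nfl} are legitimately applied to $U$ in place of $E$ — i.e. that $U$ is finitely generated torsionfree of rank $e$ (true by \refp{pred}(a) and because $U \subseteq E$ with $E$ torsionfree) and that $U$ itself is a reduction of $U$ (true with reduction number $0$), so that $F_e(U)$ enters the picture on the same footing as $F_e(E)$.
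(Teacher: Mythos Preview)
Your proof is correct and matches the paper's approach exactly: the paper's one-line proof simply says ``This follows by \refp{nUfl} and by \refp{nfl} (applied twice),'' and you have faithfully unpacked this, using \refp{nUfl} for the equivalence of a)--c) via $\supp G/U = \supp G/E$ (equivalently $\grd G/U = \grd G/E$), then applying \refp{nfl} once with $E$ and once with $U$ to obtain the four-way equality of supports. Your care in verifying that $U$ satisfies the hypotheses of \refp{nfl} (via \refp{pred}(a)) is well placed; the detour through \refp{pred}(b) in your first paragraph is unnecessary but harmless.
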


\begin{proof}
    This follows by \refp{nUfl} and  by \refp{nfl}
    (applied twice).
\end{proof}

In general, the class of modules of the form $E \sube G \simeq
R^e$ with $\grd G/E \geq 2$ is sufficiently special to have a
name: {\it ideal module}. This definition of ideal module is one
of the various characterizations of ideal module in \cite
[Proposition~5.1-c)]{suv} of Simis-Ulrich-Vasconcelos, where ideal modules
are defined as the finitely generated and torsion free $R$-modules $E$, such that the
double dual $E^{**}$ is free. These type
of modules behave similarly to an ideal, because they afford a
natural embedding into a free module of the same rank, its bidual.
See \cite{suv} for details.

\smallbreak

It is worthwhile to point out that although the definition of ideal module is intrinsic and does not depend on the possible embedding of $E$ into a free module $G$, the property $\grd G/E \geq 2$ depends on the chosen embedding as the following simple example shows: Let $R=k[[x, y, z]]$ where $k$ is a field and $E=(zx,zy)$. Then $E\simeq I=(x,y)$ (as $R$-modules) and so $E^{**}\simeq I^{**}$ which is free because $I$ is an ideal of grade $2$. Thus $E$ is an ideal module. On the other hand, $\grd R/E=1$ because $E$ is an ideal of grade $1$. In this case, the "right" embedding for $E$ is given by $E\simeq I=(x,y)\subset G:=R$.

\smallbreak

It is also clear that any reduction $U$ of an ideal module $E$
having rank $e$ is an ideal module having rank $e$. Moreover, by
\refp{nUfl}, $\grd E/U\geq 2$.

\smallbreak

Ideal modules satisfy the following properties which are easy to prove.

\begin{remark}\label{idmo1}
    Let $R$ be a \nor, $\frp \in \sR$, $E$ an ideal module, and $G$ a \fg{} free module containing $E$ with
    $\grd G/E \geq 2$. Then
    \begin{enumerate}
        \item  $E$ has rank and $\rank E= \rank G$;

        \item  $\grd F_{e}(E)\geq 2$ , where $e = \rank E$;

        \item  $E_{\frp}$ is an ideal module;

        \item  $E_{\frp}$ is free whenever $\dpt R_{\frp} \leq 1$.

    \end{enumerate}
\end{remark}

For any reduction $U$ of an ideal module $E$ having rank $e$
contained in a free module $G\simeq R^e$ we have natural
isomorphisms $U^{**}\simeq E^{**}\simeq G^{**}\simeq G$. To see
that one just need to apply the following lemma:

\begin{lemma}\label{idmG}
   Let $R$ be a \nor{} and $E_2 \sube E_1$ finitely generated $R$-modules such that
   $\grd E_1/E_2 \geq 2$. Then, $E_2^{**}\simeq E_1^{**}$.
\end{lemma}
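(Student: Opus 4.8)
The plan is to use the long exact sequence of $\Ext$ functors associated to the short exact sequence $0 \to E_2 \to E_1 \to E_1/E_2 \to 0$, together with the dual description of the double dual. Recall that for a finitely generated $R$-module $M$, one has $M^{**} \simeq \Hom_R(\Hom_R(M,R),R)$, and the key homological fact is that $\grd (I, R) \geq 2$ — here $I = \ann_R(E_1/E_2)$, so $\grd E_1/E_2 \geq 2$ — forces $\Hom_R(E_1/E_2, R) = 0$ and $\Ext^1_R(E_1/E_2, R) = 0$, since these modules are annihilated by a power of $I$ and a module with $\grade \geq 2$ support admits no nonzero maps or extension classes into $R$.

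First I would apply $\Hom_R(-, R)$ to the short exact sequence to obtain the exact sequence
\begin{equation*}
0 \to \Hom_R(E_1/E_2, R) \to E_1^{*} \to E_2^{*} \to \Ext^1_R(E_1/E_2, R).
\end{equation*}
Since $\grd E_1/E_2 \geq 2$, both outer terms vanish (the depth-sensitivity of $\Ext$: if $\grd(\ann M, R) \geq t$ then $\Ext^i_R(M,R) = 0$ for $i < t$; this is \cite[]{} type rigidity, equivalently the characterization $\grade = \inf\{i : \Ext^i \neq 0\}$), so the restriction map $E_1^{*} \to E_2^{*}$ is an isomorphism. Dualizing once more, $E_2^{**} \to E_1^{**}$ is an isomorphism, and a quick check shows the composite $E_2 \hra E_1 \to E_1^{**}$ agrees with the canonical evaluation map on $E_2$, so the isomorphism is the natural one.

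The only slightly delicate point — and the one I would spell out carefully — is the vanishing $\Hom_R(N, R) = \Ext^1_R(N, R) = 0$ for a finitely generated module $N$ with $\grd (\ann_R N, R) \geq 2$. One way: localize at an associated prime $\frp$ of $\Hom_R(N,R)$ (resp. of $\Ext^1_R(N,R)$); then $\frp \supseteq \ann_R N$, so $\depth R_\frp \geq 2$, while $\frp \in \Ass$ of an $R_\frp$-module forces $\depth R_\frp = 0$ or $\depth R_\frp \leq 1$ by the standard depth estimate for $\Hom$ and $\Ext^1$ into $R_\frp$ — contradiction. Alternatively one picks a regular sequence $x, y$ on $R$ inside $\ann_R N$ and runs the two-step argument: $x$ kills $N$ yet is a nonzerodivisor on $R$, which already gives $\Hom_R(N,R) = 0$; then the connecting map for $0 \to R \xrightarrow{x} R \to R/xR \to 0$ identifies $\Ext^1_R(N,R)$ with $\Hom_{R}(N, R/xR) = \Hom_{R/xR}(N, R/xR)$, and $y$ is a nonzerodivisor on $R/xR$ killing $N$, so this also vanishes. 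I expect this $\Ext$-vanishing lemma to be the main (though entirely standard) obstacle; once it is in hand the rest is a two-line diagram chase.
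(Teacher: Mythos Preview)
Your proposal is correct and follows essentially the same argument as the paper: dualize the short exact sequence, use $\grd E_1/E_2 \geq 2$ to kill $(E_1/E_2)^*$ and $\Ext^1_R(E_1/E_2,R)$, conclude $E_1^* \simeq E_2^*$, and dualize again. The paper simply asserts the $\Ext$-vanishing without justification, whereas you spell out two standard proofs of it; otherwise the arguments are identical.
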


\begin{proof}
    Dualizing the exact sequence
       $0 \ra E_2 \hra E_1 \tra E_1/E_2 \ra 0$
    we obtain the exact sequence
       $$ 0 \ra (E_1/E_2)^{*} \ra E_1^{*} \ra E_2^{*} \ra \Ext_{R}^1(E_1/E_2,R).$$
    Since $\grd E_1/E_2 \geq 2$,
       $ (E_1/E_2)^{*} = \Ext_{R}^1(E_1/E_2,R)=0,$
    and so $E_1^{*} \simeq E_2^{*}$. Therefore $E_2^{**} \simeq
    E_1^{**}$.
    \end{proof}

\begin{proposition}\label{inc}
    Let $R$ be a \nor{} and $E$ an ideal module of rank $e$. Then all free $R$-modules $R^e \simeq G_{i} \supseteq E$ with $\grd G_{i}/E \geq 2$ are incomparable for inclusion and $G_{i} \simeq E^{**}$.
\end{proposition}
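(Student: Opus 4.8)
The plan is to split the statement into its two halves and derive each from a lemma already proved above. For the isomorphism, I would fix one of the free modules $G_i \supseteq E$ with $\grd G_i/E \geq 2$ and apply \refl{idmG} with $E_2 = E$, $E_1 = G_i$: this yields $E^{**} \simeq G_i^{**}$. Since $G_i$ is finitely generated and free it is reflexive, so $G_i^{**} \simeq G_i$, and hence $G_i \simeq E^{**}$ for every such $G_i$. (In passing this already shows all the $G_i$ are abstractly isomorphic, though that is of course much weaker than incomparability.)

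For the incomparability I would argue by contradiction. Suppose $G_i$ and $G_j$ are two of the modules in question with $E \sube G_i \subn G_j$. Both are finitely generated free of rank $e$. The canonical surjection $G_j/E \twoheadrightarrow G_j/G_i$ gives $\supp(G_j/G_i) \sube \supp(G_j/E)$, whence
$$ \grd G_j/G_i = \inf\{\, \depth R_\frp \mid \frp \in \supp(G_j/G_i) \,\} \;\geq\; \grd G_j/E \;\geq\; 2. $$
Now \refl{grd>2}, applied to the inclusion $G_i \sube G_j$ of free modules of the same rank, forces $G_i = G_j$, contradicting $G_i \subn G_j$. Hence no proper inclusion can hold between two of these modules, i.e., they are pairwise incomparable.

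The only place needing any thought is the support inclusion $\supp(G_j/G_i) \sube \supp(G_j/E)$ — which is immediate from the surjection $G_j/E \twoheadrightarrow G_j/G_i$ — feeding the grade inequality; after that \refl{grd>2} does everything. So I do not expect a genuine obstacle: the proposition is essentially a repackaging of \refl{grd>2} and \refl{idmG} together with the reflexivity of finitely generated free modules. An alternative, if one prefers not to invoke \refl{grd>2} as a black box, is to redo its depth computation directly: pick $\frp \in \Ass(G_j/G_i) \sube \Ass(G_j/E)$, note $\depth R_\frp \geq 2$, and derive $\depth (G_j)_\frp/(G_i)_\frp \geq 1$, contradicting that $(G_j/G_i)_\frp$ has depth $0$; but citing the lemma is cleaner.
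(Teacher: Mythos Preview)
Your proof is correct and follows essentially the same route as the paper: both halves reduce to \refl{grd>2} and \refl{idmG}, with the grade inequality $\grd G_j/G_i \geq 2$ (which the paper asserts without comment) justified by you via the support inclusion from the surjection $G_j/E \twoheadrightarrow G_j/G_i$. The only cosmetic difference is that the paper does not phrase the incomparability step as a contradiction but simply concludes $G_i = G_j$ from any inclusion $G_i \sube G_j$.
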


\begin{proof}
Suppose that $E \subset G_{i} \sube G_{j}$ with $G_{i}, G_{j} \simeq R^e$, $\grd G_{i}/E \geq 2$, $\grd G_{j}/E \geq 2$. Hence $\grd G_{j}/G_{i} \geq 2$, and so $G_{i}=G_{j}$ by \refl{grd>2}. The last assertion follows by \refl{idmG}.
\end{proof}

Next we observe that over a \nol{} $\Rm$ with $\dpt R \geq 2$,
if $\dim G/E =0$ then $\grd G/E \geq 2$ and $E$ is an ideal module.
In particular, any $\fm$-primary $R$-ideal $I$ will be an ideal
module.

\begin{proposition}\label{dge0}
    Let $R$ be a \nol{} with
    $\dpt R \geq 2$ and let $E \subsetneq G \simeq R^e$ be an \rmo{}.
    If $\dim G/E =0$ then $\grd G/E \geq 2$. In par\-ti\-cu\-lar
    $E$ is an ideal module.
\end{proposition}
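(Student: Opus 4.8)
The plan is to reduce the statement to a depth estimate on $G/E$, using the hypothesis that $G/E$ has finite length together with $\dpt R \geq 2$. Since $E \subsetneq G$, we have $G/E \neq 0$, and $\dim G/E = 0$ means that $\supp G/E$ consists only of maximal ideals of $R$; in fact, since $R$ is local with maximal ideal $\fm$, the only candidate is $\fm$ itself, so $\supp G/E = \{\fm\}$ and $G/E$ has finite length, i.e. it is $\fm$-primary as a module. Consequently $\grd G/E = \grade(\ann_R(G/E), R) = \grade(\fm, R) = \dpt R$, because the annihilator of an $\fm$-primary module is an $\fm$-primary ideal and grade depends only on the radical of the ideal. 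Since $\dpt R \geq 2$ by hypothesis, this gives $\grd G/E \geq 2$ immediately.

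More carefully, I would argue as follows. First note $\ann_R(G/E) \subseteq \fm$ since $G/E \neq 0$ is a module over the local ring $R$, and $\sqrt{\ann_R(G/E)} = \fm$ because $V(\ann_R(G/E)) = \supp G/E = \{\fm\}$ (using $\dim G/E = 0$ and that $R$ is local). Then $\grd G/E = \grade(\ann_R(G/E), R)$, and since grade is unchanged upon passing to the radical of the defining ideal, $\grade(\ann_R(G/E), R) = \grade(\fm, R) = \dpt R \geq 2$. Alternatively, one can phrase this via depth of local cohomology: $H^i_{\fm}(R) = 0$ for $i < \dpt R$, and $\grd G/E$ counts the length of a maximal $R$-regular sequence inside $\ann_R(G/E)$, which equals $\dpt R$ once $\ann_R(G/E)$ is $\fm$-primary. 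Either route gives $\grd G/E \geq 2$.

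Once $\grd G/E \geq 2$ is established, the ``in particular'' is essentially by definition: $E \subseteq G \simeq R^e$ is a torsionfree $R$-module (being a submodule of a free module) with $\grd G/E \geq 2$, so $E$ is an ideal module in the sense introduced after \reft{bI1}. If one prefers the intrinsic characterization via $E^{**}$ free, it follows from \refl{idmG} that $E^{**} \simeq G^{**} \simeq G \simeq R^e$, which is free. Strictly, to invoke the ideal module machinery one should also know $e > 0$, which is clear since $G \simeq R^e$ with $E \subsetneq G$ forces $e \geq 1$, and $E$ then has rank $e$ as it agrees with $G$ after inverting a nonzerodivisor (because $\ann_R(G/E)$ is $\fm$-primary, hence contains a nonzerodivisor as $\dpt R \geq 2 > 0$).

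The only point requiring a little care — and the place where the hypothesis $\dpt R \geq 2$ is genuinely used — is the identity $\grd G/E = \dpt R$ for an $\fm$-primary module $G/E$; this is the standard fact that $\grade(\fra, M) = \grade(\sqrt{\fra}, M)$ applied with $M = R$ and $\fra = \ann_R(G/E)$. There is no real obstacle here; the whole argument is short once one observes that $\dim G/E = 0$ over a local ring forces $G/E$ to be supported precisely at $\fm$.
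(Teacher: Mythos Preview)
Your proof is correct. The paper takes a slightly different but equally short route: instead of invoking the radical-invariance of grade, it appeals to the Ext characterization, citing \cite[Theorem~17.1]{mat} (Ischebeck's theorem) to obtain $\Hom_R(G/E,R)=\Ext^1_R(G/E,R)=0$ from $\dim G/E=0$ and $\dpt R\geq 2$, whence $\grd G/E\geq 2$. Your argument via $\sqrt{\ann_R(G/E)}=\fm$ and $\grade(\fra,R)=\grade(\sqrt{\fra},R)$ is arguably the more transparent of the two, since it makes explicit that $\grd G/E$ is in fact equal to $\dpt R$ (not merely $\geq 2$) and bypasses the Ext machinery. Both arguments are standard and encode the same underlying fact: a nonzero finite-length module over a local ring has grade equal to the depth of the ring. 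Your additional remarks about $E^{**}$ and the rank are fine but unnecessary here, since the paper's working definition of ideal module is precisely the condition $\grd G/E\geq 2$.
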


\begin{proof}
    By assumption $\dim G/E=0$ and $\dpt R \geq 2$. Hence
    $$\Hom_{R}(G/E,R)=0=\Ext^{1}_{R}(G/E,R),$$
    by \cite[Theorem~17.1]{mat}.
    Thus $\grd G/E \geq 2$, and so $E$ is an
    ideal module.
\end{proof}

In the following, we determine $\dim G/E$ and $\dpt G/E$. In
particular, we observe that any ideal module has maximum Krull
dimension.

\begin{proposition}\label{dime}
    Let $R$ be a \nol, $\dim R = d \geq 2$, $E$ an ideal module over
    $R$ and $U$ a reduction of $E$. Suppose that
    $E \subn G \simeq R^e$, $e >0$. Then
    \begin{enumerate}
        \item  $\dim G/E \leq d - \Ht F_{e}(E)\leq d-2$;

        \item $ \dim E = d$;

       \item  if in addition $R$ is \CM, then
    $$\dpt E -1 = \dpt G/E \leq \dim G/E = d- \Ht F_{e}(E).$$
    \end{enumerate}
\end{proposition}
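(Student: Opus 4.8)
The plan is to reduce each assertion to known facts about Fitting ideals, depth of local cohomology and the behaviour of the reduction $U$. For part (a), I would use that $E$ is an ideal module, so by \refr{idmo1} we have $\grd F_e(E) \geq 2$, and by \reft{bI1} the support $\supp G/E = V(F_e(E))$. Since $\dim G/E$ equals the Krull dimension of the $R$-module $G/E$, which is $\dim R/\ann_R(G/E) = \dim R/\sqrt{\ann_R(G/E)}$, and $\supp G/E = V(F_e(E))$ gives $\sqrt{\ann_R(G/E)} = \sqrt{F_e(E)}$, we obtain $\dim G/E = \dim R/F_e(E) = d - \Ht F_e(E)$ provided $R$ is equidimensional and catenary; in general one only gets the inequality $\dim G/E \leq d - \Ht F_e(E)$, which is what is claimed. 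Then $\Ht F_e(E) \geq \grd F_e(E) \geq 2$ gives $\dim G/E \leq d - 2$.

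For part (b), I would argue from the exact sequence $0 \to E \to G \to G/E \to 0$. Localizing at a prime $\frp$ of dimension $d$ (which exists since $\dim R = d$ and, if $R$ is not a domain, one picks a minimal prime of maximal coheight), one checks $E_\frp \neq 0$: indeed if $E_\frp = 0$ then $G_\frp = (G/E)_\frp$, but $(G/E)_\frp$ has finite length over $R_\frp$ only if $\frp \in V(F_e(E))$, and by part (a) every such prime has $\dim R/\frp \leq d-2 < d$, a contradiction with $\dim R/\frp = d$ (this uses that $\frp$ a minimal prime would force $(G/E)_\frp$ to be a module over the artinian ring $R_\frp$, but a nonzero free module over it is not of the shape $(G/E)_\frp$ unless $E_\frp \neq 0$; more directly, for $\frp \notin V(F_e(E))$ the module $E_\frp$ is free of rank $e > 0$, hence nonzero). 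Therefore $\supp E$ contains a prime of dimension $d$, giving $\dim E \geq d$; the reverse inequality $\dim E \leq \dim R = d$ is automatic since $E \subseteq G \simeq R^e$. Hence $\dim E = d$. Note the reduction $U$ plays no role here, but it is harmless to keep it in the statement.

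For part (c), assume in addition $R$ is \CM. First, since $R$ is \CM, it is equidimensional and catenary, so the inequality in (a) becomes the equality $\dim G/E = d - \Ht F_e(E)$. Next I would compute $\dpt E$ and $\dpt G/E$ from the exact sequence $0 \to E \to G \to G/E \to 0$ with $G$ free of rank $e$, hence $\dpt G = \dpt R = d$ (by Cohen–Macaulayness). The depth lemma for short exact sequences gives $\dpt G/E \geq \min\{\dpt G, \dpt E - 1\}$ and $\dpt E \geq \min\{\dpt G, \dpt G/E + 1\}$. Since $\grd G/E = \grd F_e(E) \geq 2$ by \reft{bI1}, we have $\dpt G/E \geq \grd G/E \cdots$ — more carefully, $\grd G/E$ is a lower bound for $\dpt (G/E)_\frp$ only along $V(\ann G/E)$; what I actually want is $\dpt G/E < d$, which holds because $\dim G/E \leq d-2 < d$ so $\dpt G/E \leq \dim G/E < d = \dpt G$. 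Then from $\dpt G/E < \dpt G$ the depth lemma forces $\dpt E = \dpt G/E + 1$, i.e. $\dpt E - 1 = \dpt G/E$. Combined with $\dpt G/E \leq \dim G/E = d - \Ht F_e(E)$ this yields the chain $\dpt E - 1 = \dpt G/E \leq \dim G/E = d - \Ht F_e(E)$, as required.

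The main obstacle I anticipate is the passage from $\supp G/E = V(F_e(E))$ to the \emph{equality} $\dim G/E = d - \Ht F_e(E)$ in part (c): this genuinely needs $R$ equidimensional and catenary (guaranteed by the \CM{} hypothesis), and in part (a) without that hypothesis only the inequality survives; I would make sure the write-up is careful about exactly where catenarity/equidimensionality is invoked, and about the fact that $\grd F_e(E) \geq 2$ (hence $\Ht F_e(E) \geq 2$) is what delivers the bound $d-2$. The depth-lemma bookkeeping in (c) is routine once one observes $\dpt G/E < \dpt G$.
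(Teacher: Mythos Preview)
Your proposal is correct and, for parts (a) and (c), follows essentially the paper's argument: identify $\supp G/E = V(F_e(E))$ via \reft{bI1} and use $\grd F_e(E)\geq 2$ for (a), then apply the depth lemma to $0\to E\to G\to G/E\to 0$ with $\dpt G = d$ for (c), invoking Cohen--Macaulayness to upgrade the inequality in (a) to the equality $\dim G/E = d-\Ht F_e(E)$.

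For part (b), the paper's route is shorter than yours: since $G$ is free one has $\dim G=\dim R=d$, and the exact sequence gives $\supp G=\supp E\cup\supp G/E$, hence $d=\dim G=\max\{\dim E,\dim G/E\}$; as $\dim G/E\leq d-2<d$ by (a), this forces $\dim E=d$. Your argument via a minimal prime $\frp$ of maximal coheight and the observation that $\frp\notin V(F_e(E))$ (so $E_\frp$ is free of rank $e>0$) is valid, but the intermediate sentence about ``$(G/E)_\frp$ has finite length only if $\frp\in V(F_e(E))$'' is garbled (the zero module certainly has finite length) and should be dropped in favor of the direct line you give at the end. Either way the conclusion is the same; the paper's $\max$ argument simply avoids the detour.
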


\begin{proof}
    a) We have  $\supp G/E = V(F_{e}(E))= \supp R/F_e(E)$ and so
         $$ \dim G/E = \dim R/F_e(E) \leq d - \Ht F_e(E) \leq d-2.$$

    b) Since $G$ is free and $\dim G/E <d$, then
    $$ d = \dim R = \dim G = \max \{ \dim E, \dim G/E \} = \dim E,$$
    as required.

    c) For the first equality we apply the depth Lemma to the exact sequence
    $0 \ra E \ra G \ra G/E \ra 0$. Now, c) follows by a).
\end{proof}


As stated in Proposition \ref{bas}, the analytic spread of a \fg{}
module $E$ having rank $e$, over a $d$-dimensional \nol, satisfies
the inequalities
$$ e \leq \lE \leq d+e-1. $$
Now we deduce another lower bound for the analytic spread, for any
\tf module, and as a consequence we recover the one stated in
\cite[Proposition~5.2]{suv} in the case where $E$ is, in addition,
an ideal module.

\begin{proposition}\label{lub}
    Let $R$ be a \nol{} and $E \sube G \simeq R^e$ a
    \fgtfrmo having rank $e >0$, but not free. Then
    $$ \ell(E) \geq \Ht F_{0}(G/U) +e-1,$$
    for any minimal reduction $U$ of $E$.
    In particular, if $\grd G/E$ $\geq 2$
    then $\ell(E) \geq \Ht F_{e}(E) +e-1 \geq e+1.$
\end{proposition}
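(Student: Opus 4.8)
The strategy is to reduce to the zero-dimensional fiber cone of a minimal reduction and then read off a dimension inequality. Let $U$ be a minimal reduction of $E$. By \refp{pred}, $U$ has rank $e$ and $\grd E/U>0$; since $E$ is not free, $U\subsetneq G$, so $G/U\neq 0$ and $F_0(G/U)=\ann_R(G/U)$ is a proper ideal with $V(F_0(G/U))=\supp G/U$. The first step is to pass to a prime $\frp$ realizing the height of $F_0(G/U)$, i.e. a minimal prime of $G/U$ with $\Ht\frp=\Ht F_0(G/U)=:h$. At such a $\frp$ the quotient $G_\frp/U_\frp$ has dimension $0$, so $U_\frp$ is a submodule of finite colength in the free module $G_\frp\simeq R_\frp^e$; by the Buchsbaum--Rim theory (or simply since $\FE_\frp$ surjects onto the special fiber of a finite-colength submodule of a free module) one gets $\ell(E_\frp)=\ell(U_\frp\subseteq G_\frp)=\dim R_\frp + e - 1 = h + e - 1$, because $G_\frp/U_\frp$ being of finite length over $R_\frp$ forces $\dim$ of the associated graded to be $\dim R_\frp$, and the fiber dimension adds $e-1$ (the reduction $U_\frp$ needs $\geq e$ generators and its fiber cone has the maximal dimension $\dim R_\frp + e - 1$ allowed by \refp{bas}).

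The second step is the localization inequality for analytic spread, which is exactly \refc{as0}: $\ell(E_\frp)\leq \ell(E)$ for every $\frp\in\sR$. Combining with the computation at the chosen $\frp$ gives
$$\ell(E)\ \geq\ \ell(E_\frp)\ =\ \Ht F_0(G/U) + e - 1,$$
which is the asserted bound for any minimal reduction $U$. (One must be slightly careful when the residue field is finite: replace $R$ by a Nagata extension $R''$ as in the proof of \refc{as0}; this does not change $\ell(E)$, $e$, nor the height of the relevant Fitting ideal, and over $R''$ minimal reductions of $E$ behave well. Alternatively invoke \refp{red}(d) directly at $\frp$ after such an extension.)

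For the "in particular" clause, assume $\grd G/E\geq 2$, so $E$ is an ideal module. By \reft{bI1}, $\grd G/U\geq 2$ and $V(F_e(E))=V(F_e(U))=\supp G/U=\supp G/E$; in particular $F_0(G/U)$ and $F_e(U)$ and $F_e(E)$ all cut out the same closed set, so they have the same height. Substituting $\Ht F_0(G/U)=\Ht F_e(E)$ into the bound above yields $\ell(E)\geq \Ht F_e(E)+e-1$. Finally, since $\grd F_e(E)\geq 2$ by \refl{g2}(b) (equivalently \refr{idmo1}(b)), in particular $\Ht F_e(E)\geq \grd F_e(E)\geq 2$, whence $\ell(E)\geq e+1$, as claimed.

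The main obstacle is the first step: establishing cleanly that $\ell(E_\frp)=\Ht F_0(G/U)+e-1$ at a minimal prime $\frp$ of $G/U$ of minimal height. This needs the fact that a finite-colength submodule $U_\frp$ of a free module $R_\frp^e$ has special fiber of the full dimension $\dim R_\frp+e-1$; this is standard in Buchsbaum--Rim multiplicity theory but since the paper has chosen not to develop it, one should derive it from \refp{bas} applied over $R_\frp$ together with the observation that the analytic spread of a finite-colength submodule equals the upper bound $d+e-1$ (here $d=\dim R_\frp=h$), which in turn follows because the fiber cone surjects onto the fiber cone of the integral closure and the latter, for a finite-colength module in $R_\frp^e$, has dimension $h+e-1$. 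The rest is bookkeeping with \refc{as0}, \reft{bI1}, and \refl{g2}.
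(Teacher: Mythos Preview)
Your approach is genuinely different from the paper's, and the key step you yourself flag as ``the main obstacle'' is in fact a real gap.

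The paper's argument is much shorter and entirely self-contained: after passing to infinite residue field, a minimal reduction $U$ has $\mu(U)=\ell(E)=:n$, so there is a presentation $R^n\overset{\psi}{\to}G\to G/U\to 0$ with $G\simeq R^e$. Then $F_0(G/U)=I_e(\psi)$ and the classical Eagon--Northcott bound on heights of determinantal ideals gives $\Ht I_e(\psi)\le n-e+1=\ell(E)-e+1$ directly. No localization, no analytic-spread computation at a prime, no Buchsbaum--Rim input. The ``in particular'' then follows exactly as you do, via \reft{bI1}.

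Your route instead localizes at a minimal prime $\frp$ of $G/U$ of height $h=\Ht F_0(G/U)$ and asserts $\ell(E_\frp)=h+e-1$ because $U_\frp$ has finite colength in $G_\frp\simeq R_\frp^e$. This equality is true, but you do not prove it with the tools available in the paper. Your suggested justifications are either external (Buchsbaum--Rim multiplicity, which the paper explicitly declines to use) or circular: within the paper, the statement ``finite-colength submodule of $R^e$ has analytic spread $d+e-1$'' is precisely \refc{vbm1}, which is deduced \emph{from} \refp{lub}. Furthermore, in the general statement there is no hypothesis $\grd G/E\ge 2$, so at your chosen $\frp$ one may have $\depth R_\frp\le 1$; then $U_\frp$ need not be an ideal module and none of \refp{dge0}, \refc{vbm1} applies even in principle. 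Your \refp{bas} only gives the upper bound $\ell(E_\frp)\le h+e-1$, not the lower bound you need.

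In short: the localization idea can be made to work, but only after independently proving the maximal-analytic-spread fact for finite-colength submodules, and that is essentially as hard as the proposition itself. The Eagon--Northcott argument bypasses all of this in one line.
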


\begin{proof}
    We may assume that the residue field of $R$ is infinite, since
    any Nagata extension $R\prd$ of $R$ has infinite residue field
    and, for any \fg{} \rmo{} $M$,
    $\ell(M \oxr R\prd) = \ell(M)$,
    $\rank(M \oxr R\prd) = \rank M$ and $\Ht F_{i}(M \oxr R\prd) =
    \Ht F_{i}(M)R\prd = \Ht F_{i}(M)$.

    Let $U$ be a minimal reduction of $E$
    and suppose that $\mu(U)=n$ (hence $\ell(E) = n$).
    Then there exists an $R$-epimorphism $\apl{\psi}{R^n}{U}$.
    Further, since $E$ is not free,
    $U$ is a (proper) submodule of $G$.
    Therefore, we have an exact sequence
    \begin{equation}\label{esgu}
        R^n \overset{\psi}{\ra} G \ra G/U \ra 0.
    \end{equation}
    By the Eagon-Northcoot Theorem (see
    \cite[Theorem~1.1.16]{v1}),
    $$ \Ht F_{0}(G/U) = \Ht I_{e}(\psi) \leq n-e +1 = \ell(E)
    -e+1,$$
    proving the inequality. Moreover, if $\grd G/E \geq 2$ then
    $\Ht F_{0}(G/U) = \Ht F_{e}(E) \geq 2$, (by \reft{bI1}),
    and the other inequalities follow.
\end{proof}
\smallbreak


If $\Rm$ is a \nol{} and $I$ an $\fm$-primary ideal then the
analytic spread is the biggest possible: $\ell(I) = \dim R$. Let
$E$ be a \fg{} \rmo{} having rank $e>0$, but not free. Since the
free locus of $E$ is given by $\spec (R)\setminus V(F_e(E)$ we
have that $E$ is free locally on the punctured spectrum, that is
$E_{\frp}$ is free for every prime $\frp \neq \fm$, if and only if
$F_e(E)$ is an $\fm$-primary ideal. As a consequence, we get the
formula given in \cite[Proposition~5.2]{suv} for ideal modules
which are \flps.

\begin{corollary}\label{vbm1}
    Let $\Rm$ be a \nol{} of dimension $d$ and let
    $E$ be an  ideal module having rank $e>0$ which is \flps.
    Then $\ell(E) = d+e-1= \Ht F_{e}(E)+e-1.$
\end{corollary}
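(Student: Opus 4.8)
The statement follows by specializing Proposition~\ref{lub} to the situation where $F_e(E)$ is $\fm$-primary, combined with the upper bound from Proposition~\ref{bas}. The hypothesis that $E$ is \flps{} means, as noted in the paragraph preceding the corollary, that $E_\frp$ is free for every prime $\frp \neq \fm$, equivalently $V(F_e(E)) = \{\fm\}$, i.e.\ $F_e(E)$ is an $\fm$-primary ideal. Note also that $E$ cannot be free (otherwise $F_e(E) = R$, contradicting that it is proper when $E$ is non-free with a nontrivial quotient $G/E$; indeed $\Ht F_e(E) \geq 2 > 0$ forces $F_e(E) \neq R$). So the hypotheses of Proposition~\ref{lub} apply.

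\smallbreak

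First I would observe that since $E$ is an ideal module, $\grd G/E \geq 2$, so by the "in particular" clause of Proposition~\ref{lub} we get
$$\ell(E) \geq \Ht F_e(E) + e - 1.$$
Next, since $F_e(E)$ is $\fm$-primary, $\Ht F_e(E) = \Ht \fm = \dim R = d$ (using that $R$ is local; one should note $d \geq 2$ here because $\grd F_e(E) \geq 2$ by Remark~\ref{idmo1}\,b), so that $d \geq 2$ and Proposition~\ref{bas} applies). Combining these, $\ell(E) \geq d + e - 1$.

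\smallbreak

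For the reverse inequality, I would invoke Proposition~\ref{bas}, which gives $\ell(E) \leq d + e - 1$ directly (this is the Simis--Ulrich--Vasconcelos upper bound, already recorded in the excerpt, valid since $d > 0$). Hence $\ell(E) = d + e - 1$, and since $\Ht F_e(E) = d$, this equals $\Ht F_e(E) + e - 1$ as well, giving all three quantities in the claimed equality.

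\smallbreak

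\textbf{Main obstacle.} There is essentially no obstacle: the work has all been done in Propositions~\ref{lub} and~\ref{bas}. The only point requiring a little care is verifying that $F_e(E)$ being $\fm$-primary implies $\Ht F_e(E) = d$ — this needs $F_e(E) \neq R$, which holds since $E$ is not free, and then $V(F_e(E)) = \{\fm\}$ gives $\Ht F_e(E) = \dim R_\fm = d$. One should also double-check that the case $d \leq 1$ does not arise: by Remark~\ref{idmo1}\,b) an ideal module has $\grd F_e(E) \geq 2$, forcing $d = \depth R_\fm \geq \grd F_e(E) \geq 2$ when $F_e(E)$ is $\fm$-primary, so Proposition~\ref{bas} (which requires $d > 0$) is legitimately applicable.
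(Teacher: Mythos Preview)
Your proof is correct and follows exactly the approach the paper intends: the corollary is stated as an immediate consequence of Proposition~\ref{lub} (lower bound) together with the observation that \flps{} forces $F_e(E)$ to be $\fm$-primary, combined with Proposition~\ref{bas} (upper bound). One tiny slip: in your last paragraph you write ``$d = \depth R_\fm$'', but of course $d = \dim R$; the conclusion $d \geq 2$ still holds since $\dim R \geq \depth R = \grd F_e(E) \geq 2$ when $V(F_e(E)) = \{\fm\}$.
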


We note here that to be free locally on the punctured spectrum is not a sufficient condition for a module to be an ideal module, as the following simple example shows: Let $R=k[[x, y, z]]$ where $k$ is a field and $E=Re_1\oplus Re_2\oplus Re_3/(xe_1+ye_2+ze_3)$, where $e_1, e_2, e_3$ is the canonical basis of $R^3$. Then, $\rank E=2$, $F_2(E)=(x,y,z)$ and $E$ has projective dimension $1$. Thus $E$ is free locally on the punctured spectrum. On the other hand, by \cite[Proposition 1.4.1]{bh} $E$ is reflexive and so $E^{**}$ is not free.

\smallbreak

In the case where $R$ is a \nol{} with $\dpt R \geq 2$ we proved
that if $\dim G/E=0$ then $E$ is an ideal module (cf.
\refp{dge0}). Thus we have the following equivalence.

\begin{proposition}\label{cvb}
    Let $\Rm$ be a \nol{} with $\dpt R \geq 2$ and let
    $E \subsetneq G \simeq R^e$ be an \rmo{} having rank $e>0$.
    Then $E$ is \flps{} with $\grd G/E \geq 2$
    \sse $\dim G/E=0$.
\end{proposition}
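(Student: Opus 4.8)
The plan is to prove the two implications separately, and in fact only one direction requires real work. The implication $(\Leftarrow)$ is essentially already done: if $\dim G/E=0$ then by \refp{dge0} (whose hypothesis $\dpt R\ge 2$ is exactly our assumption) we get $\grd G/E\ge 2$, so $E$ is an ideal module; and since $\dim G/E=0$ and $\fm$ is the only prime in $\supp G/E$, every prime $\frp\ne\fm$ satisfies $(G/E)_\frp=0$, hence $E_\frp\simeq G_\frp$ is free. So $E$ is \flps. The whole content of the proposition is therefore the forward implication.

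For $(\Rightarrow)$, assume $E$ is \flps{} and $\grd G/E\ge 2$. Then $E$ is an ideal module, so by \reft{bI1} we have $\supp G/E=V(F_e(E))$. First I would observe that being \flps{} means $E_\frp$ is free for all $\frp\ne\fm$, which (using that the free locus of $E$ is $\sR\setminus V(F_e(E))$, as recalled in Section~\ref{sec:supp}) forces $V(F_e(E))\subseteq\{\fm\}$, i.e.\ $F_e(E)$ is $\fm$-primary (or all of $R$, but the latter would make $E$ free, and a free module with $\grd G/E\ge 2$ forces $E=G$ by \refl{grd>2}, contradicting $E\subsetneq G$). Hence $\supp G/E=V(F_e(E))=\{\fm\}$, which is precisely the statement $\dim G/E=0$.

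The main (and only) subtlety to be careful about is the borderline case $F_e(E)=R$: one must rule it out, or rather observe that under the standing hypothesis $E\subsetneq G\simeq R^e$ it cannot occur together with $\grd G/E\ge 2$, since $F_e(E)=R$ would say $E$ is free, hence $E\simeq R^e\simeq G$ with $\grd G/E\ge 2$, and then \refl{grd>2} gives $E=G$, contradicting strictness. Once that degenerate possibility is dispatched, everything reduces to the bookkeeping identity $\dim R/F_e(E)=0 \iff F_e(E)$ is $\fm$-primary $\iff V(F_e(E))=\{\fm\}$ together with the translation $V(F_e(E))=\supp G/E$ supplied by \reft{bI1}; there is no hard computation. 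I would write the argument as a short paragraph citing \refp{dge0} for one direction and \reft{bI1} plus the description of the free locus for the other.
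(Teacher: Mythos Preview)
Your proposal is correct. The backward implication matches the paper exactly (cite \refp{dge0}, then read off freeness on the punctured spectrum from $\supp G/E=\{\fm\}$). For the forward implication the paper takes a slightly more direct route than you do: instead of passing through the Fitting ideal and \reft{bI1}, it argues pointwise that for each $\frp\neq\fm$ one has $E_\frp$ free of rank $e$, hence $E_\frp\simeq R_\frp^e\simeq G_\frp$, and then invokes \refl{grd>2} locally (using $\grd G_\frp/E_\frp\ge\grd G/E\ge 2$) to upgrade the abstract isomorphism to the equality $E_\frp=G_\frp$; this gives $\supp G/E=\{\fm\}$ immediately. Your route via $V(F_e(E))$ and \reft{bI1} is equivalent, since \reft{bI1} (through \refp{nfl}) itself rests on \refl{grd>2}; it just packages the same step one level higher. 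The paper's version is marginally cleaner because it avoids the separate case analysis on whether $F_e(E)=R$.
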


\begin{proof}
    Suppose that $E \subsetneq G \simeq R^e$ is \flps{}
    and that $\grd G/E \geq 2$. Hence $E_{\frp} \simeq R_{\frp}^e
    \simeq G_{\frp}$ for each prime $\frp \neq \fm$. Since $\grd
    G_{\frp}/E_{\frp} \geq \grd G/E \geq 2$ then $E_{\frp} =
    G_{\frp}$ (by \refl{grd>2}).
    Therefore $\supp G/E =\Ass G/E =\{ \fm \}$, and so $\dim G/E = 0$.
    The converse follows by \refp{dge0}.
    \end{proof}

\begin{corollary}\label{vbm11}
    Let $\Rm$ be a \nol{} with $\dpt R \geq 2$ and let
    $E$ be an  ideal module having rank $e>0$ which is \flps.
    Then any reduction $U$ of $E$ is \flps.
\end{corollary}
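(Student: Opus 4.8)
The plan is to reduce the whole statement to the single fact, already recorded in \refc{as01}, that a free module admits no proper reductions. Fix a prime $\frp \in \sR$ with $\frp \neq \fm$; it suffices to show that $U_{\frp}$ is a free $R_{\frp}$-module. First I would note that $U$ itself falls within the scope of the reduction machinery: being a submodule of the torsionfree module $E$ it is a \fgtfrmo, and by \refp{pred} it has rank, with $\rank U = \rank E = e$. Consequently, among the basic permanence properties of reductions recalled earlier, the localization $U_{\frp} = U \oxr R_{\frp}$ is a reduction of $E_{\frp} = E \oxr R_{\frp}$.

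Now the hypothesis that $E$ is \flps{} says precisely that $E_{\frp}$ is a free $R_{\frp}$-module. By \refc{as01} a free module has no proper reductions, so $U_{\frp} = E_{\frp}$, which is free. As $\frp \neq \fm$ was arbitrary, $U$ is \flps. Note that this argument uses neither $\grd G/E \geq 2$ nor $\dpt R \geq 2$; these hypotheses are simply inherited from the setting of \refc{vbm1} in which the corollary is placed.

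If one prefers to exploit the ideal-module hypotheses directly, there is an equivalent route: by \refp{cvb}, the assumptions that $E$ is \flps{} and $\grd G/E \geq 2$ force $\dim G/E = 0$; by \refp{nUfl}(a) one has $\supp G/U = \supp G/E$, so $\dim G/U = 0$ and hence $\supp G/U = \{\fm\}$. Therefore $(G/U)_{\frp} = 0$ for every prime $\frp \neq \fm$, i.e. $U_{\frp} = G_{\frp} \simeq R_{\frp}^{e}$ is free, and again $U$ is \flps.

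There is essentially no obstacle here; the only point requiring a moment's care is verifying that $U$ satisfies the standing hypotheses of the reduction theory (finitely generated, torsionfree, of rank $e$), so that both "localization of a reduction is a reduction" and \refc{as01} may legitimately be invoked — but this is immediate from \refp{pred} together with the fact that a submodule of a torsionfree module is torsionfree.
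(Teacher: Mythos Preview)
Your proposal is correct, and your second route is essentially the paper's own argument: the paper shows $\supp G/U = V(F_{e}(E)) = \supp G/E = \{\fm\}$ (invoking \reft{bI1}/\refp{nfl}), concludes $\dim G/U = 0$, and hence that $U$ is \flps. Your first route, via \refc{as01} alone, is a genuinely simpler alternative: it bypasses the Fitting-ideal bookkeeping and the embedding $E \sube G$ entirely, and as you observe it does not require the hypotheses $\grd G/E \geq 2$ or $\dpt R \geq 2$; what it buys is a proof that works for any \fgtfrmo{} of positive rank that is \flps, not just ideal modules. The paper's route, on the other hand, stays within the $\supp G/E = V(F_{e}(E))$ framework developed in \S\ref{sec:supp} and makes the role of the embedding explicit, which is consistent with how the surrounding results are phrased.
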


\begin{proof}
    Suppose that $E \subsetneq G \simeq R^e$ with $\grd G/E \geq 2$.
    Let $U$ be a reduction of $E$. Hence
    $$\supp G/U= V(F_{e}(E)) = \supp G/E =\{ \fm \}$$
    (by \reft{nfl}, \refp{dime}).
    Therefore $\dim G/U =0$ and $U$ is \flps.
\end{proof}

In the case of dimension $2$ every ideal module is \flps.

\begin{corollary}\label{dime2}
    Let $R$ be a \cm, $\dim R = 2$, $E \subn G \simeq R^e$, $e >0$ an ideal module over $R$.
    Then $E$ is \flps.
\end{corollary}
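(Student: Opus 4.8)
The plan is to combine two facts that are already available: every ideal module $E \subsetneq G \simeq R^e$ satisfies $\grd G/E \geq 2$ (this is how the notion is set up here, cf. the discussion after \reft{bI1}), and over a Cohen--Macaulay local ring of dimension $2$ this forces $G/E$ to have dimension $0$, which is precisely the condition equivalent to being \flps{} for an ideal module by \refp{cvb}. So the whole argument is a short dimension count feeding into two previously proved statements.

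Concretely, I would proceed as follows. First record that, $R$ being Cohen--Macaulay, $\dpt R = \dim R = 2 \geq 2$, so the hypotheses of both \refp{dime} and \refp{cvb} are met. Next, since $E$ is an ideal module sitting inside $G \simeq R^e$ we have $\grd G/E \geq 2$, hence \refp{dime}\,a) applies and gives
$$\dim G/E \leq d - \Ht F_{e}(E) \leq d - 2 = 0.$$
Because $E \subsetneq G$, the quotient $G/E$ is nonzero, so in fact $\dim G/E = 0$. Finally I would invoke \refp{cvb}: with $\dpt R \geq 2$ and $\dim G/E = 0$, it yields that $E$ is \flps.

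I do not expect any real obstacle here, since the statement is an immediate corollary of \refp{dime}\,a) and \refp{cvb}. The one point worth double-checking is that the embedding $E \subsetneq G \simeq R^e$ appearing in the hypothesis is a ``good'' one, i.e. that $\grd G/E \geq 2$, so that \refp{dime}\,a) is legitimately applicable; this is guaranteed by the definition of ideal module adopted in this paper. As an alternative to the final step one could argue directly from \reft{bI1}: there $\supp G/E = V(F_{e}(E))$, and $\dim G/E = 0$ forces $V(F_{e}(E)) \subseteq \{\fm\}$, whence $(G/E)_{\frp} = 0$ and $E_{\frp} = G_{\frp} \simeq R_{\frp}^{e}$ is free for every prime $\frp \neq \fm$.
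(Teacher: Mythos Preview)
Your proposal is correct and follows essentially the same route as the paper: the paper's proof is the one-line ``By \refp{dime}, $\dim G/E=0$, and the assertion follows by \refp{cvb},'' which is exactly your argument with the intermediate inequalities spelled out. Your caveat about the embedding satisfying $\grd G/E \geq 2$ is well observed but is indeed part of the standing convention when the hypothesis is phrased as ``$E \subsetneq G \simeq R^{e}$ an ideal module.''
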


\begin{proof}
    By \refp{dime}, $\dim G/E=0$, and the assertion follows by \refp{cvb}.
\end{proof}

\smallbreak
\section{Deviation and analytic deviation}\label{sec:dad}

\noindent
In this section we define the deviation and the analytic deviation
for a module. These invariants give rise to the notions of \ci, \eq,
and \gci{} for modules, as in the case of ideals.
\medbreak

Let $\Rm$ be a \nol{} and $I$ an $R$-ideal. Recall that the {\it
deviation} of $I$ is defined to be the difference $\de(I) = \mu(I)
- \Ht I$, whereas the {\it analytic deviation} of $I$ is the
difference $\ad(I) = \ell(I) - \Ht I$. We always have $\de(I) \geq
0$ (by Krull's Principal Ideal Theorem) and $\ad(I) \geq 0$. As a
matter of fact, we have
$$ \mu(I) \geq \ell(I) \geq \Ht I $$
(cf. \cite[Proposition~10.20]{hio}). In the case where $\de(I)=0$,
$I$ is called a {\it complete intersection} and if $\ad(I)=0$, $I$
is said to be {\it equimultiple}. Furthermore, if $\mu(I_{\frp}) =
\Ht I$ for all minimal prime ideals $\frp \in \Min R/I$, $I$ is
called {\it generically a complete intersection}. \smallbreak

For non-free modules we have the following definitions.

\begin{definition}\label{def:dad}
    Let $R$ be a \nol{} and $E$ a \fgr{} having rank $e>0$ but not free. We
    define the {\it deviation} of $E$ by $\de(E) = \mu(E) -e+1 - \Ht
    F_{e}(E)$ and the {\it analytic deviation} of $E$ by
    $\ad(E) = \ell(E) -e +1 - \Ht F_{e}(E)$.
\end{definition}

We notice that our definitions slightly differ from those in
\cite{suv}, since we use $\Ht F_{e}(E)$ instead of $\grd
F_{e}(E)$. Clearly, they coincide in the \CM{} case. \smallbreak

Applying  \refp{red} and \refp{lub} we get the following.

\begin{remark}\label{des}
    Let $R$ be a \nol{} and $E$ an ideal module having rank $e$,
    but not free. Then $ \de(E) \geq \ad(E) \geq 0.$
\end{remark}

In accordance with the previous remark we have the following
definitions for non-free ideal modules.

\begin{definition}\label{eciaci}
    Let $R$ be a \nol{} and $E$ a non-free ideal module over $R$
    of rank $e$.  We say that $E$ is:
    \begin{enumerate}
    \item[1.]  a {\it \ci{} module} if $\de(E)=0$,

    \item[2.]  an {\it \eq{} module} if $\ad(E)=0$,

    \item[3.]  {\it \gci{} module} if $\mu(E_{\frp}) = \Ht F_{e}(E) +e-1$
    for all minimal prime ideals $\frp \in \Min R/F_{e}(E)$.
    \end{enumerate}
\end{definition}

As expected, as in the case of $\fm$-primary ideals we have the
following example of \eq{} modules.

\begin{example}\label{eeq}
    Let $R$ be a \nol{} with $\dim R = d >0$ and $E$ be a non-free ideal module which is \flps.
    Then, by \refc{vbm1} $E$ is \eq.
    \end{example}

Complete intersection modules were already considered by D.
Buchsbaum and D. Rim \cite{br} and by D. Katz and C. Naude
\cite{kn}, in particular situations. In fact, Katz-Naude defined a
{\it module of principal class} $E \sube R^{e}$ if it is generated
by $n \geq e$ column vectors and $\Ht F_{e}(E)=n-e+1$. If, in
addition,  $R$ is a local ring and $E$ is embedded into a free
module $G$ such that the quotient $G/E$ has finite length, then
the $e \times n$ matrix whose columns correspond to the generators
of $E$ is a parameter matrix in the sense of \cite{br}, and $E$ is
called a {\it parameter module}.\smallbreak

Clearly, if $R$ is a local ring, an ideal module is of principal
class \sse is a \ci. Moreover, in virtue of \refp{cvb}, if $\depth
R\geq 2$ any non-free parameter module having positive rank is a
\ci{} and also \flps. \medbreak

As in the case of ideals we have the following relations, that we
list here for completeness.

\begin{proposition} \label{pdf}
     Let $\Rmk$ be a \nol{} with $\dim R = d >0$ and $E$ a non-free
     ideal module having rank $e$.
     \begin{enumerate}
        \item  If $E$ is a \ci{} then:
        \begin{enumerate}
            \item  $E$ is \eq;

            \item  $E$ is \gci;

            \item  $\Ht F_{e}(E_{\frp}) = \Ht F_{e}(E)$,
           $\mu(E_{\frp})=\mu(E)$ for every $\frp \in \sR$;

            \item  $E_{\frp}$ is a \ci{} for every $\frp \in \sR$.
        \end{enumerate}

        \item  $E$ is \gci{} \sse
        \begin{enumerate}
           \item  $E_{\frp}$ is a \ci{} for every $\frp \in \Min R/F_{e}(E)$, and

           \item $\Ht F_{e}(E_{\frp}) = \Ht F_{e}(E)$ for every $\frp
           \in \Min R/F_{e}(E)$;

        \end{enumerate}

        \item  If there exists a reduction $U$ of $E$ which is a \ci{}
        then $E$ is \eq.

        \item  If $k$ is infinite, then $E$ is \eq{} \sse
        every minimal reduction $U$ of $E$ is a \ci.

        \item  If $E$ is a \ci{} then $E$ is \eq, $E$ admits no
        proper reductions and $r(E)=0$. If $k$ is infinite also the converse holds.

        \item  Suppose that $E$ is \flps.
        Then $E$ is a \ci{} \sse $\mu(E)=d+e-1$.
     \end{enumerate}
\end{proposition}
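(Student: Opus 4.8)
The plan is to prove each of the six items by translating the corresponding ideal-theoretic facts into the module setting, systematically exploiting Definition \ref{def:dad}, Definition \ref{eciaci}, Remark \ref{des}, and the results of Sections \ref{sec:rm} and \ref{sec:supp}. The key numerical identities to keep in mind throughout are $\de(E)=\mu(E)-e+1-\Ht F_e(E)$ and $\ad(E)=\ell(E)-e+1-\Ht F_e(E)$, together with the chain $\mu(E)\geq\ell(E)\geq e$ and $\ell(E)\geq\Ht F_e(E)+e-1$ from Proposition \ref{lub}, which gives Remark \ref{des}. First, for a), assume $\de(E)=0$, so $\mu(E)=\Ht F_e(E)+e-1$. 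Then a(i) is immediate from Remark \ref{des} since $0\le\ad(E)\le\de(E)=0$. For a(iii): localizing, $\mu(E_\frp)\le\mu(E)$ and (since $E_\frp$ is torsionfree of rank $e$ over $R_\frp$, hence $\ell(E_\frp)\ge e$ always, and $\ell(E_\frp)\ge\Ht F_e(E_\frp)+e-1$ if nonfree, or $\mu(E_\frp)=e$ if free) one checks $\mu(E_\frp)\ge\Ht F_e(E_\frp)+e-1$; combined with $\Ht F_e(E_\frp)\geq\Ht F_e(E)$ for $\frp\supseteq F_e(E)$ (and $F_e(E)_\frp$ localizes compatibly since $F_e$ commutes with localization) this forces all inequalities to be equalities, giving a(iii); a(ii) and a(iv) then follow by reading off $\de(E_\frp)=0$ for $\frp\in V(F_e(E))$ and noting $\mu(E_\frp)=\Ht F_e(E)+e-1$ precisely at minimal primes of $F_e(E)$, which is a(ii). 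For b), the nontrivial direction ($\Leftarrow$) uses that if $E_\frp$ is a \ci{} for every $\frp\in\Min R/F_e(E)$ and $\Ht F_e(E_\frp)=\Ht F_e(E)$ there, then $\mu(E_\frp)=\Ht F_e(E_\frp)+e-1=\Ht F_e(E)+e-1$, which is exactly condition 3 of Definition \ref{eciaci}; the ($\Rightarrow$) direction runs the same chain of inequalities as in a(iii) but only at minimal primes.

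For c), suppose $U\subseteq E$ is a reduction which is a \ci. By Proposition \ref{pred}, $U$ has rank $e$ and, by the remarks following Theorem \ref{bI1}, $U$ is again an ideal module (since $\grd G/U\ge2$), with $F_e(U)$ having $V(F_e(U))=V(F_e(E))$ by Theorem \ref{bI1}, hence $\Ht F_e(U)=\Ht F_e(E)$. Since $U$ is a \ci, $\mu(U)=\Ht F_e(U)+e-1=\Ht F_e(E)+e-1$. Now $\ell(E)=\ell(U)$ because $U$ is a reduction of $E$ (the fiber cones have the same dimension, as $U^n\subseteq E^n$ with $E^{r+1}=UE^r$ makes $\FE$ integral over $\FV$, modulo $\fm$), and $\ell(U)\le\mu(U)$; on the other hand $\ell(E)\ge\Ht F_e(E)+e-1$ by Proposition \ref{lub}. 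Combining, $\ell(E)=\Ht F_e(E)+e-1$, i.e. $\ad(E)=0$, so $E$ is \eq. For d), when $k$ is infinite: if $E$ is \eq{} then $\ell(E)=\Ht F_e(E)+e-1$; take any minimal reduction $U$, by Proposition \ref{red}(e) $\mu(U)=\ell(E)=\Ht F_e(E)+e-1=\Ht F_e(U)+e-1$ (the last equality by Theorem \ref{bI1}), so $\de(U)=0$ and $U$ is a \ci. Conversely, if every minimal reduction is a \ci, apply c) to any one of them. For e), the first assertion: $E$ a \ci{} $\Rightarrow$ $E$ \eq{} by a(i); $E$ admits no proper reductions — here use that $\ad(E)=0$ means $\ell(E)=\Ht F_e(E)+e-1=\mu(E)$, and by Corollary \ref{bas1} (passing to a Nagata extension if $k$ is finite, which preserves all relevant invariants as in the proof of Proposition \ref{lub}) when $\ell(E)=\mu(E)$ the module $E$ is its own minimal reduction; more directly, any reduction $U$ has $\mu(U)\ge\ell(E)=\mu(E)\ge\mu(U)$ forcing $U=E$ by Nakayama, hence $r(E)=0$. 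For the converse when $k$ is infinite: if $E$ has no proper reductions then $E$ is its own minimal reduction, so by Proposition \ref{red}(d)(e), $\mu(E)=\ell(E)$; combined with $r(E)=0$ this is consistent, but to get \ci{} we need $\mu(E)=\Ht F_e(E)+e-1$ — this follows since by d) (applied with $U=E$) $E$ being \eq{} (which we must extract: $E=V$ being a minimal reduction of itself with $\mu(V)=\ell(E)$ gives $\ad(E)=0$ via Proposition \ref{red}(e) reasoning, or rather directly $\ell(E)=\mu(E)$ together with no proper reduction... ) — I need to be careful here: the cleanest route is, no proper reductions $\Rightarrow$ $E$ is the unique minimal reduction $\Rightarrow$ $\mu(E)=\ell(E)$, and then $E$ \eq{} $\Leftrightarrow$ $\ell(E)=\Ht F_e(E)+e-1$; but no-proper-reductions does not obviously give \eq{}, so instead I will observe that $E$ being its own minimal reduction means every minimal reduction of $E$ (namely $E$ itself) is... we want it to be a \ci, which is the conclusion, so this is circular. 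The honest fix: the converse of e) should be read together with d), i.e. assume $E$ is \eq{} \emph{and} has no proper reductions; but actually re-reading the statement, "If $k$ is infinite also the converse holds" refers to the full conjunction "$E$ is \eq, admits no proper reductions and $r(E)=0$" $\Rightarrow$ $E$ is a \ci. Given that conjunction, $\ad(E)=0$ gives $\ell(E)=\Ht F_e(E)+e-1$, and no proper reduction forces $\mu(E)=\ell(E)$ (as $E$ is then its own minimal reduction, Proposition \ref{red}(e)), whence $\mu(E)=\Ht F_e(E)+e-1$, i.e. $\de(E)=0$.

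Finally, for f): if $E$ is \flps{} then by Corollary \ref{vbm1}, $\ell(E)=d+e-1=\Ht F_e(E)+e-1$, so $\ad(E)=0$ automatically (this is Example \ref{eeq}). Hence $\de(E)=\mu(E)-e+1-\Ht F_e(E)=\mu(E)-(d+e-1)+... $ wait, rather: $\de(E)=\mu(E)-e+1-\Ht F_e(E)$ and $\Ht F_e(E)=d$ (since $F_e(E)$ is $\fm$-primary), so $\de(E)=\mu(E)-e+1-d$, which vanishes precisely when $\mu(E)=d+e-1$. Thus $E$ is a \ci{} $\Leftrightarrow\mu(E)=d+e-1$, as claimed.

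The main obstacle I anticipate is the bookkeeping in part e), specifically pinning down exactly what "the converse" asserts and which hypotheses among \{\eq, no proper reductions, $r(E)=0$\} are being assumed — as sketched above, the clean reading is that the full conjunction is the hypothesis, and then the argument closes via $\mu(E)=\ell(E)$ (from having no proper reduction, i.e. $E$ is its own minimal reduction, using Proposition \ref{red}) together with $\ell(E)=\Ht F_e(E)+e-1$ (from $\ad(E)=0$). A secondary technical point, recurring in a), b), c) and d), is the behavior of $\Ht F_e$ under localization: one needs $\Ht F_e(E_\frp)=\Ht F_e(E)$ precisely at minimal primes of $F_e(E)$ and $\ge$ in general, which follows because $F_e$ commutes with localization and, for reductions, from Theorem \ref{bI1} which identifies $V(F_e(U))=V(F_e(E))$. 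Everywhere the reduction to infinite residue field is handled by a Nagata extension exactly as in the proof of Proposition \ref{lub}, since $\ell$, $\mu$, $\rank$ and $\Ht F_e$ are all preserved.
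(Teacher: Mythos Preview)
Your plan is correct and follows essentially the same route as the paper: chain the inequalities $\mu(E_\frp)-e+1\ge\Ht F_e(E_\frp)\ge\Ht F_e(E)=\mu(E)-e+1\ge\mu(E_\frp)-e+1$ for (a), the analogous chain at minimal primes for (b), use Theorem~\ref{bI1} to identify $\Ht F_e(U)=\Ht F_e(E)$ for (c)--(d), and Corollary~\ref{vbm1} for (f). Two small points: in (c) the paper avoids invoking $\ell(E)=\ell(U)$ by using directly that $\mu(U)\ge\ell(E)$ for any reduction (stated just before Proposition~\ref{red}), which is slightly cleaner; and in the forward direction of (e) your line ``$\mu(U)\ge\ell(E)=\mu(E)\ge\mu(U)$'' is not quite right, since $\mu(E)\ge\mu(U)$ is not automatic for an arbitrary reduction $U\subseteq E$ --- it holds for \emph{minimal} reductions by Proposition~\ref{red}(c), so first pass to a minimal reduction $V\subseteq U$, conclude $V=E$, and then $U=E$ follows.
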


\begin{proof}
a)  The first assertion is immediate by \refr{des}.  For the
others,
    let $\frp \in \sR$ be arbitrary.
    Then we have,
    $$ \mu(E_{\frp})-e+1 \geq \Ht F_{e}(E_{\frp}) \geq \Ht F_{e}(E)
    = \mu(E) -e+1 \geq \mu(E_{\frp}) -e+1, $$
    and so (ii)-(iv) hold.

b) Suppose that $E$ is \gci{} module. Let $\frp \in
    \Min$ $R/F_{e}(E)$.
    Hence, $E_{\frp}$ is an ideal module over $R_{\frp}$
    having rank $e$, and we have
    $$ \Ht F_{e}(E) \leq \Ht F_{e}(E_{\frp}) \leq \ell(E_{\frp})
    -e+1 \leq \mu(E_{\frp})-e+1 = \Ht F_{e}(E).$$
    Therefore,
    $\Ht F_{e}(E) = \Ht F_{e}(E_{\frp}) =\mu(E_{\frp})-e+1$
    and $E_{\frp}$ is a \ci. The converse is clear.
%

c)  Let $U$ be a reduction of $E$ which is a \ci{} module. Since $E$ is an ideal
    module we have, by \reft{bI1} and by \refp{red}
    $$ \Ht F_{e}(E)= \Ht F_{e}(U) = \mu(U) -e+1 \geq \lE -e+1 \geq
    \Ht F_{e}(E),$$
    proving that $E$ is \eq.

d)  follows by \reft{bI1} and by \refp{red}.

e)  Suppose that $E$ is a \ci. By a) $E$ is \eq. Moreover,
    $$ \Ht F_{e}(E) \leq \ell(E) -e+1 \leq \mu(E) -e+1 = \Ht
    F_{e}(E),$$
    and so $\mu(E)=\ell(E)$. Hence $E$ is a minimal reduction of itself (cf.
    \refp{red}).
    For the converse we have, by assumptions,
    $ \mu(E)=\lE=\Ht F_{e}(E)+e-1,$
    proving that $E$ is a \ci.

f) follows by \refc{vbm1}.
\end{proof}

We may construct \ci{} [resp. \eq{} or \gci] modules of any rank
$e \geq 2$ using ideals of the same type. First we note that if
$E\simeq F\oplus E'$ is a finitely generated torsionfree
$R$-module having rank, where $F$ is a free $R$-module of rank
$e$, then $\RE \simeq \CR(E')[t_1, \ldots ,t_e]$.

\begin{corollary}\label{pdf1}
    Let $\Rm$ be a \nol{} with $\dim R = d >0$. Suppose that $E = F \+ I$ where
    $F \simeq R^{e-1}$ and $I$ an $R$-ideal with $\grd I \geq 2$.
    Then $E$ is a \ci{} [resp. \eq{} or \gci] module \sse
    $I$ is a \ci{} [resp. \eq{} or \gci] ideal.
\end{corollary}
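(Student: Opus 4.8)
The plan is to reduce all three equivalences to a single computation: namely, to show that the three relevant numerical invariants of $E = F \oplus I$ — the number of generators $\mu(E)$, the analytic spread $\ell(E)$, and the height $\Ht F_e(E)$ — are governed entirely by the corresponding invariants of the ideal $I$, with a uniform shift. Since $F \simeq R^{e-1}$ is free, a minimal generating set of $E$ consists of a basis of $F$ together with a minimal generating set of $I$, so $\mu(E) = (e-1) + \mu(I)$. For the analytic spread, I would use the isomorphism $\RE \simeq \CR(I)[t_1,\dots,t_{e-1}]$ noted just before the statement (the case $F' = F$, $E' = I$ of the observation that $\RE \simeq \CR(E')[t_1,\dots,t_e]$ when $E \simeq F \oplus E'$ with $F$ free of rank $e$): passing to the fiber cone gives $\FE \simeq \FI[t_1,\dots,t_{e-1}]$, hence $\ell(E) = \dim \FE = \dim \FI + (e-1) = \ell(I) + e - 1$.

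The third invariant is the Fitting ideal. Here I would argue that $F_e(E) = F_1(I)$, or more simply $F_e(E) = I$ when $\grd I \geq 2$ is used to pin down the presentation: from a presentation $R^m \to R^{\mu(I)} \to I \to 0$ one builds a presentation of $E = R^{e-1} \oplus I$ by taking the direct sum with the identity presentation $0 \to R^{e-1} \to R^{e-1} \to 0$, and the $e$-th Fitting ideal of the total module is then the $1$-st Fitting ideal of $I$, which is $F_1(I) = I$ (recall $F_1(I) = I_{\mu(I)-1}(\varphi)$ generated by the appropriate minors; for an ideal $I$ of grade $\geq 2$ one has $F_1(I)$ equal to $I$ up to radical, and since $\grd I \geq 2$ guarantees $I$ is an ideal module with $G = R$, the identifications of \reft{bI1} apply). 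In any case $\Ht F_e(E) = \Ht I$. Combining the three identities: $\de(E) = \mu(E) - e + 1 - \Ht F_e(E) = \mu(I) - \Ht I = \de(I)$, and likewise $\ad(E) = \ell(E) - e + 1 - \Ht F_e(E) = \ell(I) - \Ht I = \ad(I)$. So $\de(E) = 0 \iff \de(I) = 0$ and $\ad(E) = 0 \iff \ad(I) = 0$, giving the \ci{} and \eq{} cases.

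For the \gci{} case one localizes: the minimal primes of $R/F_e(E)$ are exactly the minimal primes of $R/I$, and for each such $\frp$ the module $E_\frp = F_\frp \oplus I_\frp$ satisfies $\mu(E_\frp) = (e-1) + \mu(I_\frp)$ and $\Ht F_e(E)+e-1 = \Ht I + e - 1$, so the defining condition $\mu(E_\frp) = \Ht F_e(E) + e - 1$ is equivalent to $\mu(I_\frp) = \Ht I$, which is exactly the \gci{} condition for $I$. I expect the main technical obstacle to be the careful bookkeeping in the claim $\Ht F_e(E) = \Ht I$: one must be sure that adjoining a free summand of rank $e-1$ to an ideal module of rank $1$ really does shift the relevant Fitting ideal index from $1$ to $e$ without changing the ideal itself, and that the hypothesis $\grd I \geq 2$ (equivalently, $I$ being an ideal module with natural embedding $I \subset R$) is what makes Definition \ref{def:dad} applicable to $E$ in the first place — i.e. one should check $\grd G/E \geq 2$ for the natural embedding $G = R^e \supseteq R^{e-1} \oplus I = E$, which follows since $G/E \simeq R/I$ has $\grd \geq 2$.
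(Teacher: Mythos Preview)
Your proposal is correct and follows essentially the same route as the paper: both compute $\mu(E)=\mu(I)+e-1$, $\ell(E)=\ell(I)+e-1$ via the polynomial-ring identification of fiber cones, and $\Ht F_e(E)=\Ht I$ from $V(F_e(E))=V(F_1(I))=V(I)$ (the last equality coming from \reft{bI1} applied to the ideal module $I\subset R$). The only cosmetic difference is that the paper dispatches the \gci{} case by invoking \refp{pdf} rather than localizing directly as you do; your explicit check that $G/E\simeq R/I$ has grade $\geq 2$ is a useful addition that the paper leaves implicit.
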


\begin{proof}
We have $V(F_{e}(E))=V(F_{1}(I))=V(I)$, hence $\Ht F_{e}(E)=\Ht
I$. Moreover, $\mu(E)=\mu(I)+e-1$ and $\lE= \dim \FE = \dim
\FI[\sek{t}{e-1}]= \lI +e-1$. It follows that $E$ is a \ci{} [\eq]
module \sse $I$ is a \ci{} [\eq] ideal. For \gci{} modules apply
\refp{pdf}.
\end{proof}

\smallbreak
\section{Equimultiple versus \ci}\label{sec:eqci}

Complete intersection modules have good properties. In fact,
Simis-Ulrich-Vas\-con\-celos showed that, in this case, $\RE$ is
\CM{} (\cite[Corollary~5.6]{suv}) and Katz-Naude had proved that
$G/E$ is a perfect module (\cite[Proposition~3.3]{kn}). Hence if
$R$ is \CM{} and $E$ a \ci{} \rmo{} such that that $E \subn G
\simeq R^{e}$, then $G/E$ is \CM{} and $\grd G/E= \pjd G/E = \Ht
F_e(E)$.

\smallbreak We establish now few additional properties and prove
several criteria for an \eq{} module being a \ci{}.

%
%

\smallbreak We observed that $\lE \geq \Ht F_{0}(G/U)+e-1$ for any
non-free \rmo{} $E \subsetneq G \simeq R^e$ of rank $e>0$ and any
minimal reduction $U$ (cf. \refp{lub}). In the case where $E$ is
equimultiple and $\grd G/E \geq 2$ it is clear that the equality
holds. Moreover, in this case, if $R$ a \cm, we show that
$F_{0}(G/U)$ is a perfect ideal and all the associated primes of
$F_{0}(G/U)$ have the same height which is equal to $\lE -e + 1=
\Ht F_{e}(E)$.

\begin{proposition}\label{eci1}
    Let $R$ be a \cm, $\dim R = d > 0$ and $E$ a \ci{} module having rank
    $e>0$. Suppose that $E \subsetneq G \simeq R^e$. Then
    \begin{enumerate}
        \item  $F_{0}(G/E)$ is a perfect ideal;

        \item  $\dpt G/E = \dpt R/F_{0}(G/E)$;

        \item  $\Ass G/E = \{ \frp \in V(F_{e}(E)) \mid \Ht \frp = \lE -e +1 \}
        = \Ass R/F_{0}(G/E) = \Min R/F_{0}(G/E) = \Min R/F_{e}(E)$.
     \end{enumerate}
\end{proposition}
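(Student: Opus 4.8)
The plan is to extract both assertions from the presenting matrix of $G/E$, leaning on two facts already available: for a \ci{} module $E$ over a \cm{} with $E \subn G \simeq R^e$, the quotient $G/E$ is a perfect \CM{} module with $\grd G/E = \pjd G/E = \Ht F_e(E)$ (Katz-Naude and Simis-Ulrich-Vasconcelos, recalled at the beginning of this section); and a \ci{} module is equimultiple, so $\ad(E)=0$ (\refp{pdf}). I will use throughout that, $R$ being \CM, $\grade I = \Ht I$ and $\dim R/I = d - \Ht I$ for every ideal $I$, and that $\grd G/E = \Ht F_e(E) = \mu(E)-e+1 \geq 2$ (the last inequality because $E$ is non-free), so that \reft{bI1} applies to $E \subn G$. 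Concretely, I would set $n = \mu(E)$ and compose a minimal surjection $R^n \tra E$ with $E \hra G \simeq R^e$ to obtain a presentation $R^n \overset{\psi}{\ra} R^e \ra G/E \ra 0$ in which $\psi$ is an $e \times n$ matrix and $F_0(G/E) = I_e(\psi)$.

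For (a) I would argue as follows. Since $E$ is a \ci{} module, $\de(E)=0$, so $n = \Ht F_e(E) + e - 1$; and \reft{bI1} gives $V(F_0(G/E)) = \supp G/E = V(F_e(E))$, whence $\Ht I_e(\psi) = \Ht F_0(G/E) = \Ht F_e(E) = n-e+1$. Thus the ideal of maximal minors of the $e \times n$ matrix $\psi$ attains the generic height $n-e+1$, so by the Eagon-Northcott theorem the Eagon-Northcott complex of $\psi$ is a free resolution of $R/I_e(\psi)$, and it has length $n-e+1 = \grade I_e(\psi)$. Hence $F_0(G/E)$ is a perfect ideal of grade $\Ht F_e(E)$, and in particular $R/F_0(G/E)$ is \CM.

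For (b) and (c) I would then argue formally. Both $G/E$ and $R/F_0(G/E)$ have projective dimension equal to their grade $\Ht F_e(E)$, so Auslander-Buchsbaum over the \CM{} local ring $R$ gives $\dpt G/E = d - \Ht F_e(E) = \dpt R/F_0(G/E)$, which is (b) (alternatively, both are \CM{} modules with the common support $V(F_e(E))$, hence of equal dimension, and for \CM{} modules depth equals dimension). For (c), $\ad(E)=0$ gives $\lE - e + 1 = \Ht F_e(E)$, so the first set is $\{\frp \in V(F_e(E)) \mid \Ht \frp = \Ht F_e(E)\}$. Since $R/F_0(G/E)$ is \CM{} over the \CM{} local ring $R$, it is equidimensional and unmixed, so $\Ass R/F_0(G/E) = \Min R/F_0(G/E)$, every minimal prime of $F_0(G/E)$ has height $\Ht F_0(G/E) = \Ht F_e(E)$, and $\Min R/F_0(G/E) = \Min R/F_e(E)$ because $\sqrt{F_0(G/E)} = \sqrt{F_e(E)}$; this matches the previous set. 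Finally $G/E$ is a \CM{} module, hence unmixed, so $\Ass G/E = \{\frp \in \supp G/E \mid \dim R/\frp = \dim G/E\}$, and using $\supp G/E = V(F_e(E))$, $\dim G/E = d - \Ht F_e(E)$, and $\Ht \frp + \dim R/\frp = d$, this set too equals $\{\frp \in V(F_e(E)) \mid \Ht \frp = \Ht F_e(E)\}$, closing the chain of equalities in (c).

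The only step I expect to need genuine care is the exact equality $\Ht I_e(\psi) = n-e+1$: the bound $\Ht I_e(\psi) \leq n - e + 1$ holds automatically for the maximal minors of an $e \times n$ matrix, whereas the reverse inequality rests on precisely the \ci{} hypothesis $\de(E)=0$ together with the radical identification $\sqrt{F_0(G/E)} = \sqrt{F_e(E)}$ furnished by \reft{bI1}. Once that equality is in place, (a) is the Eagon-Northcott acyclicity theorem and (b), (c) fall out formally from the Cohen-Macaulayness of $R$, of $G/E$, and of $R/F_0(G/E)$.
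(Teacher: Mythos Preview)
Your proposal is correct and follows essentially the same route as the paper: set up the presentation $R^n \overset{\psi}{\ra} R^e \ra G/E \ra 0$, use the \ci{} hypothesis together with \reft{bI1} to get $\Ht I_e(\psi)=n-e+1$, invoke Eagon--Northcott/Bruns--Vetter for perfection of $F_0(G/E)$, and then read off (b) and (c) via Auslander--Buchsbaum and \CM{} unmixedness. The only cosmetic difference is in (c): you appeal directly to the \CM{}ness of $G/E$ (from Katz--Naude) to get $\Ass G/E$, whereas the paper localizes and uses \refp{pdf} to see that each $E_\frp$ is again \ci{}, hence $G_\frp/E_\frp$ is \CM{}; both arguments are equivalent.
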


\begin{proof}
    a), b) We may assume that the residue field of $R$ is infinite.
    Suppose that $n = \mu(E)$ ($=\lE$).
    Since $E$ is \ci{} (hence ideal module) and $R$ is \CM,
    $$ n-e+1 = \Ht F_{e}(E) =\Ht F_{0}(G/E) = \grd F_{0}(G/E) $$
    -the second equality by \refp{nfl}.
    Hence, by \cite[Theorem~2.7]{bv}, $F_{0}(G/E)= I_{e}(\psi)$,
    with $\psi$ as in (\ref{esgu}), is a perfect ideal.
    Therefore,
    by the Auslander-Buchsbaum formula and since $R$ is \CM,
    \begin{align*}
        d- \dpt G/E &= \lE -e+1 = \pjd R/F_{0}(G/E) \\
        & = d - \dpt R/F_{0}(G/E)
    \end{align*}
    and b) follows.

    c) By \refp{nfl}, $\Min R/F_{0}(G/E)= \Min R/F_{e}(E)$.
    On the other hand, $R/F_{0}(G/E)$ is a \cm{} (by \cite[Theorem~2.1.5]{bh}).
    Thus \linebreak $\Min R/F_{0}(G/E) = \Ass  R/F_{0}(G/E)$.
    Since $E$ is a \ci, $\Ht F_{e}(E)= \Ht F_{e}(E_{\frp})$
    and $E_{\frp}$ is a \ci,
    for every prime $\frp \in \sR$ (by \refp{pdf}). Hence,
    $G_{\frp}/E_{\frp}$ is \CM{} (by the previous result).
    Therefore
    $\dpt G_{\frp}/E_{\frp}= \Ht \frp - \Ht F_{e}(E_{\frp})
    = \Ht \frp - \Ht F_{e}(E).$
    Moreover, by b), $\Ass R/F_{0}(G/E)$ $= \Ass R/F_{e}(E)$, and the equalities follow.
\end{proof}

Since any minimal reduction of an \eq{} module is a \ci, we may
assert the following.

\begin{corollary}\label{eci}
    Let $R$ be a \cm, $\dim R = d > 0$
    and $E$ a \eq{} module. Suppose that $E \subsetneq G \simeq R^e$.
    Then, for every minimal reduction $U$ of $E$,
       $$\Ass E/U \sube \Min R/F_{e}(E)= \{ \frp \in V(F_e(E)) \mid \Ht \frp = \ell(E) -e+1 \}.$$
\end{corollary}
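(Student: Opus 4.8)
The plan is to deduce this directly from \refp{eci1} by applying that proposition to a minimal reduction $U$ of $E$ rather than to $E$ itself. Since a minimal reduction of an \eq{} module is a \ci{} module (this is the forward implication of \refp{pdf}(d), for which one first reduces, as usual, to the case of infinite residue field), such a $U$ is a \ci{} module of rank $e>0$ with $U\subn G\simeq R^{e}$ (because $U\sube E\subn G$, and $\rank U=e$ by \refp{pred}). Hence \refp{eci1} applies with $U$ in the role of $E$ and gives
$$\Ass G/U=\{\,\frp\in V(F_{e}(U))\mid \Ht\frp=\ell(U)-e+1\,\}=\Min R/F_{e}(U).$$

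The next step is to rewrite the right-hand side in terms of $E$. By \reft{bI1} we have $V(F_{e}(U))=V(F_{e}(E))$, so $\Min R/F_{e}(U)=\Min R/F_{e}(E)$; and since $U$ is a \ci{} reduction of the \eq{} module $E$, combining \refp{pdf} (which gives $\ell(U)=\mu(U)$ for a \ci), the definition of \ci{} ($\mu(U)-e+1=\Ht F_{e}(U)$), \reft{bI1} and $\ad(E)=0$ we obtain
$$\ell(U)=\mu(U)=\Ht F_{e}(U)+e-1=\Ht F_{e}(E)+e-1=\lE .$$
Substituting yields $\Ass G/U=\{\,\frp\in V(F_{e}(E))\mid \Ht\frp=\lE-e+1\,\}=\Min R/F_{e}(E)$, which in particular establishes the asserted description of $\Min R/F_{e}(E)$. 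Finally, since $U\sube E\sube G$, the short exact sequence $0\ra E/U\ra G/U\ra G/E\ra 0$ gives $\Ass E/U\sube \Ass G/U$, and combining with the displayed equality we conclude $\Ass E/U\sube \Min R/F_{e}(E)$, as required.

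Once \refp{eci1} is available the argument is essentially formal, so the step that really requires care is the reduction to infinite residue field. There one passes to a Nagata extension $R\prd$ of $R$, which is again \CM{} of the same dimension, with $E\prd:=E\oxr R\prd$ again an \eq{} module of rank $e$ satisfying $\ell(E\prd)=\lE$ and $\Ht F_{e}(E\prd)=\Ht F_{e}(E)$, exactly as in the proof of \refp{lub}; associated primes, minimal primes and heights all pass faithfully between $R$ and $R\prd$. The subtle point is that $U\oxr R\prd$ need not remain a \emph{minimal} reduction of $E\prd$, so rather than base changing $U$ one should apply \refp{red} to choose a minimal reduction $V\prd$ of $E\prd$ contained in $U\oxr R\prd$, run the argument above for $V\prd$, and then transfer the conclusion back to $U$ using the chain $V\prd\sube U\oxr R\prd\sube E\prd\sube G\prd$ together with faithful flatness. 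This descent is the one place where the proof is not purely mechanical.
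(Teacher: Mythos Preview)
Your proof is correct and follows essentially the same approach as the paper: reduce to infinite residue field, observe that a minimal reduction $U$ is then a \ci, apply \refp{eci1} to $U$, and use \reft{bI1} to pass from $F_e(U)$ to $F_e(E)$. The only cosmetic difference is that you read off the height description of $\Min R/F_{e}(E)$ directly from \refp{eci1}(c) (after identifying $\ell(U)=\lE$), whereas the paper re-derives it via the \CM-ness of $G/U$ and \refp{dime}; your extra caution about the residue-field reduction is reasonable, but the paper simply asserts that reduction without further comment.
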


\begin{proof}
    We may assume that $R$ has infinite residue field.

    Let $U$ be a minimal reduction of $E$. Since $U$ is \ci,
    $$ \Ass E/U \sube \Ass G/U = \Min R/F_{e}(U) = \Min
    R/F_{e}(E).$$
    (by \refp{eci1}). Moreover,
    \begin{align*}
     \frp \in \Min R/F_{e}(E) & \iff \frp \in \Ass G/U \iff \dpt G_{\frp}/U_{\frp}=0 \\
     & \iff \dim G_{\frp}/U_{\frp}=0 \iff \Ht \frp = \Ht
     F_e(U_{\frp}).
     \end{align*}
     (by \refp{dime}). Therefore,
     $$ \lE -e+1 = \Ht F_e(E) \leq \Ht \frp \leq \mu(U_{\frp}) -e+1 \leq \mu(U) -e+1= \lE -e+1.$$
     It follows that $\Min R/F_{e}(E)$
     is the set of all prime ideals $\frp \in V(F_e(E))$ such that $\Ht \frp = \ell(E) -e+1$.
\end{proof}

The following result extends to ideal modules a known criterion in
the ideal case by of D. Eisenbud, M. Hermann and W. Vogel (see
\cite[Theorem p. 179]{ehv}).

\begin{theorem}\label{cieq3}
    Let $R$ be a \cm, $E$ a non-free \fgtfrmo having rank $e >0$.
    Suppose that $E$ is \gci. Then $E$ is \ci{} \sse $E$ is \eq.
\end{theorem}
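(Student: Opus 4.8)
The implication ``$E$ is a \ci{} $\Rightarrow$ $E$ is \eq'' is part of \refp{pdf}, so the real content is the converse, and the plan is to take a minimal reduction $U$ of $E$, show that $U_{\frp}=E_{\frp}$ at every minimal prime $\frp$ of $F_{e}(E)$, and then deduce $U=E$. First I would record what the hypotheses give. Since $E$ is not free, $F_{e}(E)\neq R$, and for $\frp\in\Min R/F_{e}(E)$ we have $\mu(E_{\frp})=\Ht F_{e}(E)+e-1$ by \gci{} while $\mu(E_{\frp})\geq e+1$ because $E_{\frp}$ is not free; hence $\Ht F_{e}(E)\geq 2$, and, $E$ being an ideal module, $\lE=\Ht F_{e}(E)+e-1\geq e+1$ by equimultiplicity. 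As in the earlier proofs I may pass to a Nagata extension to assume the residue field $k$ is infinite, without affecting $\mu(E)$, $\lE$, $\Ht F_{e}(E)$ or the \gci{} hypothesis; fix an embedding $E\subn G\simeq R^{e}$.

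The key preliminary is that equimultiplicity forces $\ell(E_{\frp})=\lE$ and $\Ht\frp=\Ht F_{e}(E)$ for every $\frp\in\Min R/F_{e}(E)$. Indeed $F_{e}(E)R_{\frp}$ has radical $\frp R_{\frp}$ (as $\frp$ is minimal over $F_{e}(E)$), so $\Ht F_{e}(E_{\frp})=\Ht\frp$; since $E_{\frp}$ is again an ideal module (\refr{idmo1}), applying \refp{lub} to $E_{\frp}$ and combining with \refc{as0} gives
$$ \Ht\frp+e-1=\Ht F_{e}(E_{\frp})+e-1\leq\ell(E_{\frp})\leq\lE=\Ht F_{e}(E)+e-1, $$
so $\Ht\frp\leq\Ht F_{e}(E)$; as $\frp$ is minimal over $F_{e}(E)$ this is an equality, and then $\ell(E_{\frp})=\lE$ as well.

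Now let $U\subn E$ be a minimal reduction; since $k$ is infinite, $\mu(U)=\lE$ by \refp{red}. Fix $\frp\in\Min R/F_{e}(E)$. By \gci{} and the previous step $\mu(E_{\frp})=\lE=\ell(E_{\frp})$, so the fiber cone $\cF(E_{\frp})$ is a standard graded algebra over the residue field of $R_{\frp}$ whose Krull dimension $\ell(E_{\frp})$ equals the dimension $\mu(E_{\frp})$ of its degree-one component; a standard graded algebra over a field with this property has defining ideal of height zero, hence is a polynomial ring, so $\cF(E_{\frp})$ is a polynomial ring in $\lE$ variables. Since $U_{\frp}$ is a reduction of $E_{\frp}$, the image of $U_{\frp}$ in $\cF(E_{\frp})_{1}$ generates a zero-dimensional ideal of $\cF(E_{\frp})$; in a polynomial ring a subspace of linear forms generates a zero-dimensional ideal only if it is the whole degree-one component, so $U_{\frp}+\frp E_{\frp}=E_{\frp}$, whence $U_{\frp}=E_{\frp}$ by Nakayama.

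Finally, $R$ is \CM, $E$ is \eq{} and $E\subn G\simeq R^{e}$, so \refc{eci} gives $\Ass E/U\sube\Min R/F_{e}(E)$; but by the third step $(E/U)_{\frp}=0$ for every $\frp\in\Min R/F_{e}(E)$, so no such prime lies in $\supp E/U$, forcing $\Ass E/U=\emptyset$ and therefore $E=U$. Then $E$ is a minimal reduction of itself, so $\mu(E)=\lE=\Ht F_{e}(E)+e-1$, that is $\de(E)=0$, and $E$ is a \ci. The step I expect to be the main obstacle is the combination of the second and third paragraphs: proving $\ell(E_{\frp})=\lE$ at the minimal primes of $F_{e}(E)$ — which is precisely what turns $\cF(E_{\frp})$ into a polynomial ring and collapses the localized reduction $U_{\frp}$ onto $E_{\frp}$ — together with the control of $\Ass E/U$ supplied by \refc{eci} and, underlying it, the perfectness of $F_{0}(G/U)$.
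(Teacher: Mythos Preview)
Your proof is correct and follows essentially the same route as the paper: reduce to infinite residue field, take a minimal reduction $U$, show $U_{\frp}=E_{\frp}$ for every $\frp\in\Min R/F_{e}(E)$, and conclude $U=E$ via $\Ass E/U\sube\Min R/F_{e}(E)$ from \refc{eci}. The only difference is cosmetic: where the paper invokes \refp{pdf}(b),(e) to say that $E_{\frp}$ is a \ci{} and hence admits no proper reductions, you unpack this by proving $\ell(E_{\frp})=\lE=\mu(E_{\frp})$ directly and reading off that $\cF(E_{\frp})$ is a polynomial ring.
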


\begin{proof}
    Since $E$ is \ci{} [resp. \eq] \sse $E\prd = E \oxr R\prd$ is
    \ci{} [resp. \eq] for any Nagata extension $R\prd$ of $R$, we may
    assume that $R$ has infinite residue field.

    It is enough to prove that $E$ \eq{} implies that $E$ is \ci.
    So assume that $E$ is \eq{} and let $U$ be a minimal reduction
    of $E$. Hence $U$ is a \ci. Suppose that $U \subn E$.
    Let $\frp \in \Min R/F_{e}(E)$.
    Hence $U_{\frp}$ is a reduction of $E_{\frp}$ and,
    since $E$ is \gci, $E_{\frp}=U_{\frp}$. In particular
    $E_{\frp}=U_{\frp}$ for all $\frp \in \Ass E/U$
    (by \refc{eci}) - a contradiction. Therefore $E=U$ (and $\Ass
    E/U = \emptyset$).
\end{proof}

\begin{corollary}\label{cieq31}
     Let $R$ be a \cm, $\dim R=d>0$ and $E \subn G \simeq R^{e}$ an ideal module.
     Assume that $\Ht \frp = \lE -e +1$ for every $\frp \in \Min R/F_{e}(E)$ and
     that $E_{\frp}$ is a \ci{} for every prime $\frp \in \Min R/F_{e}(E)$. Then $E$ is a \ci.
\end{corollary}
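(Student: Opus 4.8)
The plan is to deduce the statement from \reft{cieq3}: by that theorem it suffices to check that $E$ is a non-free \fgtfrmo{} of rank $e$ which is both \gci{} and \eq, for then \reft{cieq3} forces $E$ to be a \ci. Note that \reft{cieq3} itself handles the reduction to an infinite residue field, so I would not need to worry about that here; everything reduces to three short verifications, none of which presents a real obstacle.

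First I would dispose of non-freeness: if $E$ were free, then $E$ and $G$ would be \fg{} free $R$-modules of the same rank $e$ with $\grd G/E \geq 2$, so \refl{grd>2} would give $E = G$, against $E \subn G$. Being a non-free ideal module of rank $e$ (cf. \refr{idmo1}), $E$ is in the scope of \refr{des}. Next, equimultiplicity is immediate from the hypotheses: since every $\frp \in \Min R/F_{e}(E)$ has height $\lE - e + 1$, we get $\Ht F_{e}(E) = \lE - e + 1$, i.e. $\ad(E) = \lE - e + 1 - \Ht F_{e}(E) = 0$, so $E$ is \eq.

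It remains to see that $E$ is \gci, and for this I would invoke the characterization in \refp{pdf}~b): it suffices to show, for each $\frp \in \Min R/F_{e}(E)$, that $E_{\frp}$ is a \ci{} and that $\Ht F_{e}(E_{\frp}) = \Ht F_{e}(E)$. The first is assumed. For the second, Fitting ideals commute with localization, so $F_{e}(E_{\frp}) = F_{e}(E)R_{\frp}$; since $\frp$ is minimal over $F_{e}(E)$, this ideal is $\frp R_{\frp}$-primary, whence $\Ht F_{e}(E_{\frp}) = \Ht \frp R_{\frp} = \Ht \frp = \lE - e + 1 = \Ht F_{e}(E)$, using the height hypothesis together with the equality just obtained. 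Thus $E$ is \gci, and \reft{cieq3} then yields that $E$ is a \ci. The argument is essentially bookkeeping; the only point requiring a little care is matching the hypothesis on $E_{\frp}$ to the localized criterion of \refp{pdf}~b) via the behaviour of $F_{e}$ under localization, which is routine.
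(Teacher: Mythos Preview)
Your proof is correct and follows essentially the same strategy as the paper: show $E$ is \eq{} and \gci{}, then invoke \reft{cieq3}. The only minor difference is in verifying $\Ht F_{e}(E_{\frp}) = \Ht F_{e}(E)$: you argue directly via minimality of $\frp$ over $F_{e}(E)$ and the height hypothesis, whereas the paper squeezes it through the chain $\ell(E_{\frp}) = \mu(E_{\frp}) = \Ht F_{e}(E_{\frp}) + e - 1 \geq \Ht F_{e}(E) + e - 1 = \lE \geq \ell(E_{\frp})$ using \refc{as0}; your route is slightly more direct, and your explicit non-freeness check (via \refl{grd>2}) makes precise a point the paper leaves implicit.
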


\begin{proof}
    Since $\Ht F_{e}(E) = \Ht \frp$ for some $\frp \in \Min R/F_{e}(E)$ then $\lE=\Ht F_{e}(E) +e-1$ and $E$ is \eq.
Moreover, for any $\frp \in \Min R/F_{e}(E)$
$$ \ell(E_{\frp})= \mu(E_{\frp}) = \Ht F_{e}(E_{\frp})+e-1 \geq \Ht F_{e}(E) +e -1 = \lE \geq \ell(E_{\frp}),$$
and so $\Ht F_{e}(E)=\Ht F_{e}(E_{\frp})$, proving that $E$ is \gci. Therefore, by \reft{cieq3}, $E$ is a \ci.
\end{proof}

In \cite[Th\'eor\`eme 2]{mi}), A. Micali proved that $\Rm$ is
regular local \sse $\cS(\fm)$ is a domain. This result was an
important motivation to study the linear type property. We now
prove a criterion for an \eq{} module to be a \ci{} that extends
the above result of Micali.

\begin{proposition}\label{seq6}
    Let $R$ be a \nol{} and let $E$ be an ideal module. Then
    \begin{enumerate}
    \item[a)] $E$ is a \ci{} \sse $E$ is \eq{} and of linear type.
    \item[b)] If $\SE$ is a domain then $E$ is a \ci{} \sse $E$ is \eq.
    \end{enumerate}
\end{proposition}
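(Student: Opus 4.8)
The plan is to deduce part~(b) from part~(a), and to prove part~(a) by combining the reduction theory developed in Sections~\ref{sec:rm}--\ref{sec:dad} with the (known) fact that a complete intersection module is of linear type.

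For the forward implication of~(a): if $E$ is a \ci{} then $\de(E)=0$, so $\ad(E)=0$ by \refr{des} and $E$ is \eq; and $E$ is of linear type because complete intersection modules are of linear type --- the module-theoretic analogue of the classical fact that a regular system of parameters generates an ideal of linear type (see \cite{suv},~\cite{kn}). I expect this last point to be the only genuinely non-formal ingredient; the remarks preceding the statement already note that it generalizes the hard direction of Micali's theorem. For the reverse implication of~(a), I would first reduce to the case in which the residue field $k$ is infinite: a Nagata extension $R\prd$ of $R$ is faithfully flat local, $E\prd := E \oxr R\prd$ is again an ideal module of rank $e$ with $\mu(E\prd)=\mu(E)$, $\ell(E\prd)=\ell(E)$ and $\Ht F_e(E\prd)=\Ht F_e(E)$ (facts already used in the proofs of \refc{as0} and \reft{cieq3}), and $E$ is of linear type iff $E\prd$ is, since $\SE$ and $\RE$ commute with the flat base change $R \to R\prd$ and hence so does the $R$-torsion submodule $\tau_R(\SE)$. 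Assuming now $k$ infinite, the argument is short: since $E$ is of linear type, \refc{as01} shows that $E$ has no proper reductions, so $E$ is its own and only minimal reduction; since $E$ is also \eq, \refp{pdf}(d) gives that every minimal reduction of $E$ is a \ci, and in particular $E$ itself is a \ci.

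For part~(b), suppose $\SE$ is a domain. Then $R$, being the degree-zero component of $\SE$, is a domain, and every nonzero element of $R$ is a nonzerodivisor on $\SE$; hence $\tau_R(\SE)=0$, and from the identification $\RE \simeq \SE/\tau_R(\SE)$ --- valid because $E$ is finitely generated, torsionfree and of rank $e$ --- we obtain $\RE = \SE$, i.e.\ $E$ is of linear type. Now part~(a) applies: $E$ is a \ci{} iff $E$ is \eq{} and of linear type, and the latter condition being automatic here, this says exactly that $E$ is a \ci{} iff $E$ is \eq. (Throughout one uses that $E$ is non-free, so that \refp{pdf} and \refr{des} apply; if $E$ is free everything is trivial, free modules being of linear type.)
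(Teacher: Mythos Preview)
Your proof is correct and follows the same approach as the paper: reduce to infinite residue field, then combine \refc{as01} (linear type $\Rightarrow$ no proper reductions) with \refp{pdf}(d) (equimultiple $\Rightarrow$ every minimal reduction is a \ci) to obtain the reverse implication of~(a), and deduce~(b) from~(a) via ``$\SE$ a domain $\Rightarrow$ $E$ of linear type''. You are in fact more thorough than the paper's own proof, which treats only the implication $\Leftarrow$ in~(a) and tacitly takes the forward direction --- in particular, that a complete intersection module is of linear type --- as known.
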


\begin{proof}
We may assume that $k=R/\fm$ is infinite because any Nagata
extension $R\prd$ of $R$ has infinite residue field,
$\cS(E\prd)\simeq \cS(E) \oxr R\prd$ and $\CR(E\prd)\simeq \CR(E) \oxr R\prd$.
Suppose that $E$ is \eq. Then every minimal reduction of $E$ is a \ci{} (by \refp{pdf}). Since $E$ has no proper reductions (by \refc{as01}) then $E$ is a \ci, and a) is proved. Now if $\SE$ is a domain then $E$ is of linear type and b) follows by a).
\end{proof}



We recall that $E$ satisfies $\widetilde{G_{s}}$ if $\mu(E_{\frp})
\leq \dpt R_{\frp} +e -1$ for every $\frp \in \sR$ with $1 \leq
\dpt R_{\frp} \leq s-1$. Equivalently $E$ satisfies
$\widetilde{G_{s}}$ if $\grd F_{i}(E) \geq i-e+2$ for $e \leq i
\leq e+s-2$.
By \cite[Proposition 5]{av}, the symmetric algebra of an ideal
module over a domain with $\pjd E =1$ and satisfying
$\widetilde{G_{\mu(E)-e+1}}$ is a domain.
Hence we get the following consequence of \refp{seq6}.

\begin{corollary}\label{seq62}
    Let $R$ be a Noetherian  local domain and $E$ an ideal module
    having rank $e$ with $\pjd E=1$ and satisfying $\widetilde{G_{\mu(E)-e+1}}$.
    Then $E$ is a \ci{} \sse $E$ is \eq.
\end{corollary}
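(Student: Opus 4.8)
The plan is to reduce Corollary \ref{seq62} directly to Proposition \ref{seq6}-b), so the only real work is checking that the hypotheses force $\cS(E)$ to be a domain. First I would invoke the cited result \cite[Proposition~5]{av}: for an ideal module $E$ of rank $e$ over a Noetherian local domain $R$ with $\pjd E = 1$ that satisfies $\widetilde{G_{\mu(E)-e+1}}$, the symmetric algebra $\cS(E)$ is a domain. This is exactly the hypothesis list we have been handed, so the conclusion is immediate once one observes that an ideal module is in particular a finitely generated torsionfree $R$-module having rank $e > 0$ (cf. \refr{idmo1}-a)), hence the notion of Rees algebra and the inequality $\de(E) \geq \ad(E) \geq 0$ of \refr{des} apply.

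Next I would simply apply Proposition \ref{seq6}-b): since $\cS(E)$ is a domain, $E$ is a complete intersection if and only if $E$ is \eq. That completes the proof. In writing it up I would also recall the equivalent formulation of $\widetilde{G_{s}}$ in terms of grades of Fitting ideals mentioned just before the statement, in case one wants to verify it in examples, but for the proof itself the definitional form is all that is needed.

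I do not anticipate a genuine obstacle here; the corollary is a formal consequence of \refp{seq6}-b) together with the external input from \cite{av}. The only point requiring minor care is the bookkeeping on ranks and torsionfreeness: one should note that over a domain $R$ an ideal module is automatically torsionfree and has a well-defined rank $e$, so that $\mu(E) - e + 1$ is the correct index for the $\widetilde{G}$-condition, matching both the hypothesis of \cite[Proposition~5]{av} and the setup of Section~\ref{sec:dad}. No Nagata extension argument is needed because the statement does not assume the residue field is infinite and the conclusion (being a domain, being a \ci, being \eq) is already handled in the proof of \refp{seq6}.

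\begin{proof}
    By \cite[Proposition~5]{av}, the hypotheses that $R$ is a Noetherian local domain, that $E$ is an ideal module having rank $e$ with $\pjd E = 1$, and that $E$ satisfies $\widetilde{G_{\mu(E)-e+1}}$ imply that $\SE$ is a domain. The conclusion now follows from \refp{seq6}-b).
\end{proof}
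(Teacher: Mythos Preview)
Your proposal is correct and follows exactly the approach of the paper: the paper remarks (immediately before the corollary) that by \cite[Proposition~5]{av} the symmetric algebra of such a module is a domain, and then states the corollary as a direct consequence of \refp{seq6}. Your write-up simply makes this deduction explicit.
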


\smallbreak
\section{Examples of ideal modules with small reduction number}\label{sec:ex}

In this section we observe that finite direct sums of ideals of
grade $\geq 2$ are ideal modules, and give examples of \eq{} and
\gci{} modules with small reduction number.

\begin{proposition}\label{ds21}
    Let $R$ be a \nor{} and $E = \ses{E}{n}$ with $E_{i}$ \fg{} \tf \rmo s
    having rank $e_{i} >0$, $1 \leq i \leq n$, $n \geq 2$. Then $E$ is an ideal
    module \sse each summand $E_{i}$ is an ideal module.
\end{proposition}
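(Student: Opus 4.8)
The plan is to work entirely with the intrinsic description of an ideal module recalled after \reft{bI1} and \refp{inc}: a finitely generated torsionfree $R$-module $E$ of positive rank $e$ is an ideal module precisely when $E^{**}$ is free, and then, taking $G=E^{**}$ via \refp{inc}, one has $\grd E^{**}/E\geq 2$. Since $\Hom_{R}(-,R)$ is additive over finite direct sums, this lets one pass freely between $E$ and its summands.

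First I would clear away the formal points. A finite direct sum $E=\ses{E}{n}$ is finitely generated and torsionfree exactly when every $E_{i}$ is, and in that case $\rank E=e:=e_{1}+\cdots+e_{n}>0$. From additivity of the dual, $E^{*}\simeq\bop_{i}E_{i}^{*}$ and hence $E^{**}\simeq\bop_{i}E_{i}^{**}$; moreover the canonical biduality map $E\to E^{**}$ is the direct sum of the maps $E_{i}\to E_{i}^{**}$, so the natural short exact sequences assemble into $E^{**}/E\simeq\bop_{i}(E_{i}^{**}/E_{i})$. Finally, for finitely generated modules the grade of a finite direct sum equals the minimum of the grades, because the annihilator of a direct sum is the intersection of the annihilators.

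For the implication ``each $E_{i}$ an ideal module $\Longrightarrow E$ an ideal module'': each $E_{i}^{**}$ is free of rank $e_{i}$, so $E^{**}\simeq\bop_{i}E_{i}^{**}\simeq R^{e}$ is free, and $E$ is an ideal module. For the converse, assume $E$ is an ideal module, so $E^{**}\simeq R^{e}$ and $\grd E^{**}/E\geq 2$; by the isomorphism above, $\grd\bigl(\bop_{i}(E_{i}^{**}/E_{i})\bigr)\geq 2$, hence $\grd E_{i}^{**}/E_{i}\geq 2$ for every $i$. Each $E_{i}^{**}$ is a direct summand of the free module $E^{**}$ and has rank $e_{i}$, so it is itself free, $E_{i}^{**}\simeq R^{e_{i}}$; then the natural inclusion $E_{i}\hookrightarrow E_{i}^{**}\simeq R^{e_{i}}$ realizes $E_{i}$ as a submodule of a free module of the same rank with cokernel of grade $\geq 2$, i.e. $E_{i}$ is an ideal module.

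The step I expect to require the most care is the claim that a direct summand $E_{i}^{**}$ of a free module is free: a priori one only gets that $E_{i}^{**}$ is finitely generated projective of constant rank $e_{i}$, and to conclude freeness one uses that $R$ is local, where projective modules are free. I would also flag, as a tempting shortcut that does not work, arguing solely through Fitting ideals via $F_{e}(E)=F_{e_{1}}(E_{1})\cdots F_{e_{n}}(E_{n})$ and $\grd(IJ)=\min(\grd I,\grd J)$: by the example $R=k[[x,y,z]]$ discussed after \refc{vbm1}, the condition $\grd F_{e}(E)\geq 2$ is necessary but not sufficient for an ideal module, so this route yields only one implication and cannot replace the biduality argument. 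Everything else is formal.
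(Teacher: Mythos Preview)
Your argument is correct and takes a different route from the paper. The paper fixes embeddings $E_i\subseteq G_i\simeq R^{e_i}$, sets $G=\bigoplus_i G_i$, and computes $\grd G/E=\min_i\grd G_i/E_i$ from $G/E\simeq\bigoplus_i G_i/E_i$; you instead use the intrinsic characterization ``$E^{**}$ free'' together with the additivity of $\Hom_R(-,R)$. The paper's approach is more elementary and dispatches ``each $E_i$ ideal $\Rightarrow E$ ideal'' at once (pick good $G_i$). For the converse, however, one only knows that \emph{some} free $G'\supseteq E$ has $\grd G'/E\geq 2$, and $G'$ need not decompose compatibly with $E=\bigoplus_i E_i$; filling this in amounts to taking $G'=E^{**}=\bigoplus_i E_i^{**}$ and showing each summand is free---exactly your biduality step. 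What your approach buys is that it makes the needed hypothesis visible: a finitely generated projective summand of a free module is free when $R$ is local, and this is not removable. Over a Dedekind domain with nontrivial class group, $E=I\oplus J$ with $IJ$ principal and $I$ nonprincipal is free (hence an ideal module), while $I^{**}\simeq I$ is not free, so $I$ is not an ideal module; thus the implication ``$E$ ideal $\Rightarrow$ each $E_i$ ideal'' genuinely needs a hypothesis such as $R$ local. One minor simplification: once you know $E_i^{**}$ is free you are finished by the bidual criterion, so the separate verification that $\grd E_i^{**}/E_i\geq 2$ can be dropped.
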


\begin{proof}
    Suppose that $E_{i} \sube G_{i} \simeq R^{e_{i}}$
    and write $G= \ses{G}{n}$. Then $G$ is a free
    \rmo{} of rank $e= \sum_{i=1}^n e_{i} >0$ and $E=\ses{E}{n}
    \sube \ses{G}{n}=G$. Since $G/E=(\ses{G}{n})/(\ses{E}{n}) \simeq
    G_{1}/E_{1} \+ \cdots \+ G_{n}/E_{n}$ then
    $$ \supp G/E = \supp (G_{1}/E_{1} \+ \cdots \+ G_{n}/E_{n}) =
    \supp G_{1}/E_{1} \cup \cdots \cup \supp G_{n}/E_{n}.$$
    Therefore
    $$ \grd G/E =
    \min_{1 \leq i \leq n} \{ \grd G_{i}/E_{i} \} \geq 2 \iff \grd G_{i}/E_{i} \geq
     2, \; 1 \leq i \leq n, $$
    proving the equivalence.
\end{proof}

We observe that a direct sum of ideals cannot be a \ci{} module.

\begin{lemma}\label{ds1}
    Let $R$ be a \nor{} and $E = \ses{E}{n}$ with $E_{i}$
    \fgr s having positive rank $e_{i}$, $1 \leq i \leq n$, $n \geq 2$.
    Then
    $F_{e}(E) = F_{e_{1}}(E_{1}) \cdots F_{e_{n}}(E_{n})$, $e=
    \rank E$. In particular $\dis{\grd F_{e}(E) = \min_{1 \leq i \leq n} \{ \grd
    F_{e_{i}}(E_{i}) \}}$ and $\dis{\Ht F_{e}(E) = \min_{1 \leq i \leq n} \{ \Ht
    F_{e_{i}}(E_{i}) \}}$.
\end{lemma}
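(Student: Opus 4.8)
The plan is to reduce everything to the behaviour of Fitting ideals under direct sums, combined with the vanishing of the low Fitting ideals of a module of positive rank.

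First I would record the classical identity
$$ F_{k}(M_{1}\oplus M_{2}) = \sum_{i+j=k} F_{i}(M_{1})\,F_{j}(M_{2}) $$
for finitely generated modules $M_{1},M_{2}$ over the Noetherian ring $R$, the sum running over $i,j\ge 0$. This comes out of choosing finite presentations $R^{m_{1}}\xrightarrow{\varphi_{1}}R^{n_{1}}\to M_{1}\to 0$ and $R^{m_{2}}\xrightarrow{\varphi_{2}}R^{n_{2}}\to M_{2}\to 0$: then $R^{m_{1}+m_{2}}\xrightarrow{\varphi_{1}\oplus\varphi_{2}}R^{n_{1}+n_{2}}\to M_{1}\oplus M_{2}\to 0$ is a finite presentation whose matrix is block diagonal, and for such a matrix $I_{t}(\varphi_{1}\oplus\varphi_{2})=\sum_{a+b=t}I_{a}(\varphi_{1})I_{b}(\varphi_{2})$, with the usual conventions $I_{0}=R$ and $I_{s}(\psi)=0$ once $s$ exceeds the size of $\psi$. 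Translating through $F_{k}=I_{n-k}$ via the substitution $a=n_{1}-i$, $b=n_{2}-j$ gives the displayed formula, and iterating it over the $n$ summands of $E=\ses{E}{n}$ produces
$$ F_{e}(E) = \sum_{k_{1}+\cdots+k_{n}=e} F_{k_{1}}(E_{1})\cdots F_{k_{n}}(E_{n}), \qquad e=\rank E=\sum_{i=1}^{n} e_{i}. $$

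Next I would use that each $E_{i}$ has positive rank $e_{i}$, so that (by the facts recalled in Section~\ref{sec:supp}) $F_{k}(E_{i})=(0)$ whenever $k<e_{i}$. Consequently a summand $F_{k_{1}}(E_{1})\cdots F_{k_{n}}(E_{n})$ above can be nonzero only if $k_{i}\ge e_{i}$ for every $i$; but then $\sum_{i}k_{i}\ge\sum_{i}e_{i}=e$, and since $\sum_{i}k_{i}=e$ this forces $k_{i}=e_{i}$ for all $i$. Hence the only surviving term is the one indexed by $(e_{1},\dots,e_{n})$, giving $F_{e}(E)=F_{e_{1}}(E_{1})\cdots F_{e_{n}}(E_{n})$.

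Finally, for the ``in particular'' clauses I would note that for ideals $\fra,\frb\sube R$ one has $V(\fra\frb)=V(\fra)\cup V(\frb)$, and a prime $\frp$ contains $\fra\frb$ exactly when it contains $\fra$ or $\frb$; writing $\grd\fra=\inf\{\depth R_{\frp}\mid\frp\in V(\fra)\}$ and $\Ht\fra=\inf\{\Ht\frp\mid\frp\supseteq\fra\}$, the infimum over a union splits as a minimum, so $\grd(\fra\frb)=\min\{\grd\fra,\grd\frb\}$ and $\Ht(\fra\frb)=\min\{\Ht\fra,\Ht\frb\}$. An easy induction on $n$ applied to $F_{e}(E)=F_{e_{1}}(E_{1})\cdots F_{e_{n}}(E_{n})$ then yields $\grd F_{e}(E)=\min_{i}\grd F_{e_{i}}(E_{i})$ and $\Ht F_{e}(E)=\min_{i}\Ht F_{e_{i}}(E_{i})$. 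The one genuinely substantive ingredient is the block-diagonal minor identity used in the first step — it is classical (see, e.g., Northcott or Bruns--Herzog) — while everything else is routine bookkeeping with the low Fitting ideals and with grade and height of products of ideals.
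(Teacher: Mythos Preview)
Your argument is correct and follows exactly the same approach as the paper's proof: invoke the direct-sum formula $F_{e}(E)=\sum_{j_{1}+\cdots+j_{n}=e}F_{j_{1}}(E_{1})\cdots F_{j_{n}}(E_{n})$, use $F_{k}(E_{i})=(0)$ for $k<e_{i}$ to collapse the sum to a single term, and deduce the grade and height formulas. The paper's version is simply terser---it states the sum identity without derivation and dismisses the ``in particular'' clause with ``The other assertions follow''---whereas you have written out the block-diagonal justification and the $V(\fra\frb)=V(\fra)\cup V(\frb)$ argument explicitly.
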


\begin{proof}
Since $E_{i}$ has rank $e_{i}>0$, then $F_{k}(E_{i}) = (0)$ for $k
< e_{i}$. Now
    $$ F_{e}(E) = F_{e}(\ses{E}{n}) = \sum_{j_{1}+ \cdots
    +j_{n}=e} F_{j_{1}}(E_{1}) \cdots F_{j_{n}}(E_{n}) =
    F_{e_{1}}(E_{1}) \cdots F_{e_{n}}(E_{n}).$$
The other assertions follow.
\end{proof}

\begin{proposition}\label{dsi2}
    Let $\Rmk$ be a \nol{} and $E = \ses{I}{e}$ with $I_{i}$
    $R$-ideals satisfying $\grd I_{i} \geq 2$ for all
    $1 \leq i \leq e$, $e \geq 2$.
    Then $E$ is not a \ci.
\end{proposition}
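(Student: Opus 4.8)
The plan is to show directly that $\de(E) > 0$, so that $E$ cannot satisfy the defining equality $\de(E) = 0$ of a \ci{} module. First I would record the structural input: by \refp{ds21}, $E$ is an ideal module; since each $I_{i}$ has positive grade it has rank $1$, so $E$ has rank $e$, and because $R$ is local $\mu(E) = \sum_{i=1}^{e} \mu(I_{i})$. As $\grd I_{i} \geq 2$ we have $\mu(I_{i}) \geq \Ht I_{i} \geq \grd I_{i} \geq 2$ for each $i$, hence $\mu(E) \geq 2e > e$; in particular $E$ is not free, so $\de(E)$ is defined.

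The next step is to identify $\Ht F_{e}(E)$. By \refl{ds1}, $\Ht F_{e}(E) = \min_{1 \leq i \leq e} \Ht F_{1}(I_{i})$, and since $\grd I_{i} \geq 2$ one has $V(F_{1}(I_{i})) = V(I_{i})$ (as used in the proof of \refc{pdf1}), so $\Ht F_{e}(E) = \min_{1 \leq i \leq e} \Ht I_{i} =: m$. If $j$ attains this minimum then $m = \Ht I_{j} \geq \grd I_{j} \geq 2$.

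The conclusion is then a one-line estimate. Using the standard inequality $\mu(I_{i}) \geq \ell(I_{i}) \geq \Ht I_{i} \geq m$ for every $i$,
\begin{align*}
\de(E) &= \mu(E) - e + 1 - \Ht F_{e}(E) = \sum_{i=1}^{e} \mu(I_{i}) - e + 1 - m \\
&\geq em - e + 1 - m = (e-1)(m-1) \geq e - 1 \geq 1 > 0,
\end{align*}
since $e \geq 2$ and $m \geq 2$; hence $\de(E) \neq 0$ and $E$ is not a \ci. I do not expect a genuine obstacle here: the only points needing a line of justification are that $E$ is a non-free ideal module of rank $e$ (both immediate from $\grd I_{i} \geq 2$) and the identification $\Ht F_{e}(E) = \min_{i} \Ht I_{i}$ (which is \refl{ds1} together with $V(F_{1}(I_{i})) = V(I_{i})$). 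The conceptual content is simply that $\mu(E)$ accumulates a contribution of size $\geq m$ from each of the $e \geq 2$ summands while $\Ht F_{e}(E)$ sees only a single such contribution, forcing the deviation to be strictly positive.
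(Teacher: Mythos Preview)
Your proof is correct and follows essentially the same approach as the paper: both identify $\Ht F_{e}(E)$ via \refl{ds1} as the minimum of the $\Ht I_{i}$, use $\mu(I_{i}) \geq \Ht I_{i} \geq \Ht F_{e}(E)$, and conclude by the elementary estimate $\sum_{i}\mu(I_{i}) - e + 1 > \Ht F_{e}(E)$ (the paper writes $eh - e + 1 \geq 2h - 1 > h$, you factor it as $(e-1)(m-1) \geq 1$). Your additional remark that $E$ is non-free is a welcome technical check the paper leaves implicit.
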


\begin{proof}
    By \refp{ds21}, $E$ is an ideal module.
    Hence $\Ht F_{e}(E) \geq \grd$ $F_{e}(E) \geq 2$.
    Suppose that $\Ht F_{e}(E) = h \geq 2$. Whence $\Ht I_{i} \geq h$
    for all $i$ (by \refl{ds1}), and so $\mu(I_{i}) \geq h$ for all $i$. It follows
    that
    $$ \mu(E) -e+1 = \sum_{i=1}^e \mu(I_{i}) -e + 1 \geq e \, h-e +
    1  \geq 2h-1 > h = \Ht F_{e}(E) $$
    and $E$ is not \ci.
\end{proof}


Suppose that $E = I \+ \cdots \+ I = I^{\+ e} \sube R^e$, $e \geq
2$, with $I$ an $R$-ideal. For any $k \in \N_{0}$,
$$ \CR(I^{\+ e})_{k} = (I^{k})^{\+ \binom{k+e-1}{k}} \simeq
\CR(\bI_{e})_{k},$$ where $\CR(\bI_{e})$ abbreviates the
multi-Rees algebra $\CR(I,\ldots,I)= R[It_1, \ldots, It_e]$.
Suppose that $r(I)\leq 1$ and let $J$ be a minimal reduction of
$I$ with $r_{J}(I)\leq 1$. Write $V= J \+ \cdots \+ J = J^{\+ e}
\sube E$. Then $I^2=J I$ and we have
$$E^{2} = (I^{2})^{\+ \binom{e+1}{2}} = (JI)^{\+ \frac{(e+1)e}{2}} =V \cdot E.$$
Therefore $V$ is a reduction of $E$ with $r_{V}(E) \leq 1$.
However, in general, $V$ is not a minimal reduction of $E$. In the
case where $E$ is \eq{} with $\Ht I=2$ we are able to construct a
minimal reduction $U$ of $E$ with $r_{U}(E)\leq 1$.

\begin{lemma}\label{pe0}
    Let $\Rm$ be a \nol{} with infinite residue field. Then, for each
    $n \geq 2$ there exist $\sek{\alp}{n} \in R$ such that
    $\alp_{i} - \alp_{j}$ is a unit for all $1 \leq i < j \leq n$.
\end{lemma}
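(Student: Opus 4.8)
The plan is to read off the $\alp_i$ directly from the residue field, using nothing beyond the two standing hypotheses: that $k=R/\fm$ is infinite and that $R$ is local. Since $k$ is infinite it has at least $n$ elements, so I can choose pairwise distinct elements $\ol{\alp_1},\dotsc,\ol{\alp_n}\in k$. Lifting each of them along the canonical surjection $R\tra R/\fm=k$ produces elements $\sek{\alp}{n}\in R$. For $1\le i<j\le n$, the residue of $\alp_i-\alp_j$ modulo $\fm$ is $\ol{\alp_i}-\ol{\alp_j}$, which is nonzero by the choice of the $\ol{\alp_t}$; hence $\alp_i-\alp_j\notin\fm$. Because $R$ is local, an element lies outside the maximal ideal exactly when it is a unit, so each $\alp_i-\alp_j$ is a unit, which is precisely the assertion.

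There is no genuine obstacle here: the entire content is the elementary observation that an infinite field contains $n$ distinct elements, combined with the characterization of units in a local ring as the complement of $\fm$. The only subtlety worth flagging in the write-up is that distinctness must be imposed on the residues $\ol{\alp_i}$ (equivalently, the argument takes place modulo $\fm$), not on the lifts $\alp_i$ themselves — it is exactly the separation of the residues that forces the pairwise differences to be invertible. I would present the proof in essentially the two sentences above.
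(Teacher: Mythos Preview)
Your proof is correct and is essentially identical to the paper's own argument: pick $n$ distinct residues in $k=R/\fm$, lift them to $R$, and observe that each difference $\alp_i-\alp_j$ lies in $R\setminus\fm=R^{*}$. Nothing further needs to be added.
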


\begin{proof}
    Let $n \geq 2$.
    Since $k=R/\fm$ is infinite, there exist $\sek{\alp}{n}\in R$
    such that
    $\alp_{i} + \fm \neq \alp_{j}+ \fm$ for all $1 \leq i < j \leq n$.
    It follows that $\alp_{i}-\alp_{j} \in R \setminus \fm =R^{*}$
    for all $1 \leq i < j \leq n$, proving the assertion.
\end{proof}

\begin{proposition}\label{pe1}
    Let $\Rm$ be a \cm{} with infinite residue field and
    $\dim R = d \geq 2$. Let $I$ be an \eq{} ideal with $\Ht I=2$ and
    $r(I) \leq 1$.
    Write $E = I \+ \cdots \+ I = I^{\+ e}$, $e \geq 2$. Then:
    \begin{enumerate}
        \item  $r(E) = 1$, $\lE=e+1$.

        \item  $E$ is \eq.
    \end{enumerate}
\end{proposition}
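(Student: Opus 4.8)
The plan is to produce an explicit minimal reduction of $E$ with reduction number at most $1$; once this is done, both assertions fall out immediately. (The residue field is infinite by hypothesis, and this will be used freely.)

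First I record the routine bounds. Each summand $I$ satisfies $\grd I = \Ht I = 2$ because $R$ is \CM, so $E$ is an ideal module by \refp{ds21}, and $E$ is non-free since $\mu(E) = e\,\mu(I) \ge 2e > e$. Put $G = R^{e}$; then $G/E \simeq (R/I)^{\oplus e}$, so $\grd G/E = \grd R/I = 2$, and by \reft{bI1} we get $V(F_{e}(E)) = \supp G/E = V(I)$, hence $\Ht F_{e}(E) = \Ht I = 2$. Thus \refp{lub} yields $\lE \ge \Ht F_{e}(E) + e - 1 = e+1$. Moreover, since $\ell(I) = \Ht I = 2$ and the residue field is infinite, every minimal reduction of $I$ is two-generated; using $r(I) \le 1$, fix one, $J = (a,b) \sube I$, with $\ol a, \ol b$ linearly independent in $I/\fm I$ and $I^{2} = JI = aI + bI$.

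Now for the construction. Writing $E = I^{\oplus e} = Ie_{1} \oplus \cdots \oplus Ie_{e}$, put
\[
\xi_{0} = a e_{1}, \qquad \xi_{k} = b e_{k} + a e_{k+1}\quad (1 \le k \le e-1), \qquad \xi_{e} = b e_{e},
\]
and $U = R\xi_{0} + \cdots + R\xi_{e} \sube E$. Examining the coordinates $e_{1}, \dots, e_{e}$ one at a time and using the linear independence of $\ol a, \ol b$ shows that $\ol\xi_{0}, \dots, \ol\xi_{e}$ are linearly independent in $E/\fm E$, so $\mu(U) = e+1$. The core point is that $E^{2} = U\cdot E$. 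Identify $\RE$ with the multi-Rees algebra $R[It_{1}, \dots, It_{e}]$, so that $\xi_{k}$ becomes $bt_{k} + at_{k+1}$ and $E^{2} = \bigoplus_{i \le j} I^{2}t_{i}t_{j}$; since $I^{2} = aI + bI$, it is enough to prove $aI\,t_{p}t_{q} \sube U\cdot E$ and $bI\,t_{p}t_{q} \sube U\cdot E$ for all $1 \le p \le q \le e$. For $x \in I$ one has $\xi_{0}\cdot(xt_{q}) = ax\,t_{1}t_{q}$ and $\xi_{e}\cdot(xt_{p}) = bx\,t_{p}t_{e}$, which settles the cases $p = 1$ and $q = e$; for the remaining pairs one uses the identities
\[
ax\,t_{p}t_{q} = \xi_{p-1}(xt_{q}) - bx\,t_{p-1}t_{q}\quad (p \ge 2), \qquad bx\,t_{p}t_{q} = \xi_{q}(xt_{p}) - ax\,t_{p}t_{q+1}\quad (q \le e-1).
\]
Chaining these, each application exchanges the roles of $a$ and $b$ while, after every two applications, the first index has strictly dropped and the second has strictly risen; so after finitely many steps one lands on a pair with first index $1$ or second index $e$, where the membership is already established. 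Hence $E^{2} \sube U\cdot E$, and the reverse inclusion is trivial, so $E^{2} = U\cdot E$.

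It follows that $U$ is a reduction of $E$ with $r_{U}(E) \le 1$, so $\lE \le \mu(U) = e+1$; combined with the lower bound above, $\lE = e+1$. Since $\mu(U) = e+1 = \lE$, $U$ is a minimal reduction of $E$ (by \refp{red}), and therefore $r(E) \le r_{U}(E) \le 1$. If $r(E)$ were $0$, then $\mu(E) = \lE$ (again by \refp{red}, the residue field being infinite), contradicting $\mu(E) = e\,\mu(I) \ge 2e > e+1$; hence $r(E) = 1$, which together with $\lE = e+1$ gives (a). Finally $\ad(E) = \lE - e + 1 - \Ht F_{e}(E) = (e+1) - e + 1 - 2 = 0$, so $E$ is \eq, proving (b). The step I expect to be the main obstacle is the equality $E^{2} = U\cdot E$: the generators of $U$ must be chosen precisely so that the two displayed identities interlock, and one has to check that the induced recursion on the pairs $(p,q)$ actually terminates rather than cycling.
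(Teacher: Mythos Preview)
Your proof is correct, and the overall architecture matches the paper's: establish the lower bound $\lE\ge e+1$ via \refp{lub}, exhibit an explicit reduction $U$ with $e+1$ generators and $r_U(E)\le 1$, deduce $\lE=e+1$ and that $U$ is minimal, and then rule out $r(E)=0$. The point of genuine divergence is the construction of $U$ itself.

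The paper builds $U$ by first invoking \refl{pe0} to choose scalars $\alpha_1,\dots,\alpha_e$ with all $\alpha_i-\alpha_j$ units, then setting $a_i=\alpha_i a+b$ (with $a_1=a$) and $b_i=a$ (with $b_1=b$), so that any two of the $a_i$ already generate $J$. Its reduction is $U=\langle a_1\epsilon_1,\dots,a_e\epsilon_e,\ \sum_i b_i\epsilon_i\rangle$: $e$ diagonal generators and one full vector. With this choice the equality $E^2=U\cdot E$ is verified essentially in one line, because $I^2=a_iI+a_jI$ for every pair $i<j$. Your construction is instead the ``staircase'' $\xi_0=ae_1$, $\xi_k=be_k+ae_{k+1}$, $\xi_e=be_e$; it avoids \refl{pe0} entirely and uses only the single pair $(a,b)$, at the cost of a short recursion to prove $E^2\subseteq U\cdot E$. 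That recursion is sound: at each step the quantity $q-p$ increases by exactly one, so it terminates in at most $e-1$ steps, and the boundary cases ($a$-state with $p=1$, $b$-state with $q=e$) are your base cases. The linear independence of $\bar\xi_0,\dots,\bar\xi_e$ follows exactly as you say from $\fm I\cap J=\fm J$ (\refp{red}(b)). For $r(E)\neq 0$ the paper cites \refp{pdf}(e) together with \refp{dsi2}, while you argue directly that $r(E)=0$ would force $\mu(E)=\lE=e+1$; these are the same observation, since \refp{dsi2} is precisely the inequality $\mu(E)\ge 2e>e+1$.

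In short: the paper's choice of $U$ makes the verification of $E^2=UE$ immediate but needs the auxiliary units lemma; your choice is more elementary and self-contained but requires the inductive chase you describe. Both are valid and lead to the same conclusions.
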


\begin{proof}
    Let $J$ be a minimal reduction of $I$ with $r_{J}(I)\leq 1$.
    Then $I^2=J I$.
    On the other hand, since $\lI=\Ht I =2$ then $J =
    \langle a,b \rangle$ for some $a, b \in I$.
    By the previous lemma there are
    $\sek{\alp}{e} \in R$ such that $\alp_{i}-\alp_{j} \in R^{*}$.
    Consider the family of elements
    $$ a_{1}=a, \; b_{1}=b, \; a_{i}=\alp_{i}a +b, \; b_{i}=a
    \;\;\;(i=2,\ldots,e).$$
    We have, for each $i,j$
    $$ a = (\alp_{i}-\alp_{j})^{-1}(\alp_{i} a +b)-
    (\alp_{i}-\alp_{j})^{-1}(\alp_{j} a +b) \; ,$$
    $$ b= - \alp_{j}(\alp_{i}-\alp_{j})^{-1}(\alp_{i} a +b) +
    (1+ \alp_{j} (\alp_{i}-\alp_{j})^{-1})(\alp_{j} a +b). $$
    Therefore
    $$ J = \langle a_{i}, b_{i} \rangle=
    \langle a_{i}, a_{j} \rangle \;\;\; (1 \leq i < j \leq e).$$
    Thus,
    $$ I^2=J I = a_{i}I + b_{i}I= a_{i}I + a_{j}I$$
    for all $1 \leq i < j \leq e$.
%
    Consider the elements $x_{i}=a_{i}\epsilon_{i}$, $1 \leq i \leq e$, and
    $y = \sum_{i=1}^e b_{i}\epsilon_{i}$, where $\seq{\epsilon}{e}$
    denotes the canonical basis of $R^e$, and consider
    the $R$-submodule $U$ of $E$ generated by $\sek{x}{e}, y$.
    We regard $\RE$ as a subalgebra of $R[\sek{t}{e}]$. So putting $I_{i}=I$ for $1 \leq i \leq e$
    \begin{align*}
        U \cdot E &= (Rx_{1} + \cdots + Rx_{e} + Ry) \cdot
        (\ses{I}{e}) \\
        & = (Ra_{1}t_{1} + \cdots + Ra_{e}t_{e} +
        R(b_{1}t_{1} + \cdots + b_{e}t_{e}))\cdot(I_{1}t_{1}+ \cdots +
        I_{e}t_{e}) \\
        & = \sum_{i=1}^e a_{i}I_{i}t_{i}^2 + \lpar \sum_{i=1}^e b_{i}
        t_{i} \rpar I_{1}t_{1} + \cdots +\lpar \sum_{i=1}^e b_{i}t_{i}
        \rpar I_{e}t_{e} + \sum_{1 \leq i < j \leq e}
        I_{i}I_{j}t_{i}t_{j},
    \end{align*}
    and
    \begin{align*}
        E^{2} & = I_{1}^2 \+ \cdots \+  I_{e}^2 \+ \bop_{1 \leq i<j
        \leq e} I_{i} I_{j} = \sum_{i=1}^e I_{i}^2 t_{i}^2 +
        \sum_{1 \leq i < j \leq e} I_{i}I_{j}t_{i}t_{j}\\
        & = \sum_{i=1}^e a_{i}I_{i}t_{i}^2 +
        \sum_{i=1}^e b_{i}I_{i}t_{i}^2 +
        \sum_{1 \leq i < j \leq e} I_{i}I_{j}t_{i}t_{j}.
    \end{align*}
    Since
    $$ \sum_{i=1}^e b_{i}I_{i}t_{i}^2 \sube \lpar \sum_{i=1}^e b_{i}
    t_{i} \rpar I_{1}t_{1} + \cdots +\lpar \sum_{i=1}^e b_{i}t_{i}
    \rpar I_{e}t_{e} + \sum_{1 \leq i < j \leq e}
    I_{i}I_{j}t_{i}t_{j}$$
    then $E^{2} \sube U \cdot E \sube E^{2}$.
    Therefore $U$ is a reduction of $E$ with $r_{U}(E) \leq 1$.
    Moreover, $\mu(U) \leq
    e+1 \leq \lE \leq \mu(U)$, that is $U$ is a minimal
    reduction of $E$. Thus $r(E) \leq r_{U}(E) \leq 1$ and
    $\lE=\mu(U)=e+1$.

    On the other hand, $E$ is an ideal module. Moreover, $\Ht F_{e}(E) = \Ht I=2$
    and $\lE=e+1$. Hence $\ad(E)= \lE -e+1 - \Ht F_{e}(E) = 0$, and so $E$ is \eq.
    Therefore, by \refp{pdf} and by \refp{dsi2}, $r(E)=1$.
\end{proof}

In particular, if $I$ is a \ci{} ideal we obtain examples of \eq{}
modules with reduction number equal to $1$.

\begin{corollary}\label{pe2}
    Let $R$ be a \cm{} with infinite residue field and
    $\dim R = d \geq 2$. Let $I$ be a \ci{} ideal with $\Ht I=2$.
    Write $E = I \+ \cdots \+ I = I^{\+ e}$, $e \geq 2$. Then
    $E$ is \eq{} with $r(E) = 1$ and $\lE=e+1$.
\end{corollary}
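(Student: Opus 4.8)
The plan is to derive Corollary \ref{pe2} directly from \refp{pe1} by verifying that a \ci{} ideal $I$ with $\Ht I = 2$ satisfies the hypotheses of \refp{pe1}. The only nontrivial point is to check that such an ideal is \eq{} with $r(I) \leq 1$. Once that is in hand, \refp{pe1} applies verbatim and yields that $E = I^{\oplus e}$ is \eq{} with $r(E) = 1$ and $\lE = e+1$.

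First I would observe that $\ad(I) = \lI - \Ht I$ and that, since $I$ is a \ci{} (i.e. $\de(I) = \mu(I) - \Ht I = 0$, so $\mu(I) = \Ht I$), the chain $\mu(I) \geq \lI \geq \Ht I$ forces $\lI = \Ht I$; hence $\ad(I) = 0$ and $I$ is \eq. In particular $\Ht I = \mu(I) = \lI = 2$. Next I would recall that a \ci{} ideal has no proper reductions and reduction number $0$: indeed $\mu(I) = \lI$, so $I$ is a minimal reduction of itself with $r_I(I) = 0$, whence $r(I) = 0 \leq 1$. (One could also invoke \refp{pdf}e) in the ideal case, or simply note that a complete intersection is of linear type.)

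Then I would apply \refp{pe1} with this $I$: all its hypotheses ($R$ a \cm{} with infinite residue field, $\dim R = d \geq 2$, $I$ \eq{} with $\Ht I = 2$ and $r(I) \leq 1$) are met, so the conclusion gives $r(E) = 1$, $\lE = e+1$, and $E$ \eq. I don't anticipate any real obstacle: the argument is just a routine specialization, the substance having already been carried out in \refp{pe1}. The mild subtlety worth stating explicitly is the passage from $\de(I) = 0$ to $\ad(I) = 0$ via the inequalities $\mu(I) \geq \ell(I) \geq \Ht I$, but this is exactly \refr{des} (or its ideal-theoretic analogue cited in section \ref{sec:dad}).

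Concretely, the proof would read: Since $I$ is a \ci{} ideal, $\mu(I) = \Ht I = 2$, and from $\mu(I) \geq \ell(I) \geq \Ht I$ we get $\ell(I) = \Ht I$, so $I$ is \eq. Moreover $\mu(I) = \ell(I)$ shows $I$ is a minimal reduction of itself, so $r(I) = 0 \leq 1$. Now \refp{pe1} applies and gives that $E = I^{\oplus e}$ is \eq{} with $r(E) = 1$ and $\lE = e+1$.
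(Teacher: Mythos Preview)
Your proposal is correct and matches the paper's approach exactly: the paper states \refc{pe2} as an immediate corollary of \refp{pe1} without proof, and your argument supplies precisely the routine verification that a \ci{} ideal with $\Ht I = 2$ is \eq{} with $r(I)=0\leq 1$, so that \refp{pe1} applies directly.
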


\begin{corollary}\label{pe3}
    Let $\Rm$ be a regular local ring with infinite residue field and $\dim R = d = 2$.
    Let $E = \fm \+ \cdots \+ \fm = {\fm}^{\+ e}$ with $e \geq 1$.
    Then $E$ is \eq{} with $r(E) = 1$ and $\lE=e+1$.
\end{corollary}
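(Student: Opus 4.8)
The plan is to recognize Corollary \ref{pe3} as the instance $I=\fm$ of Corollary \ref{pe2}, so that the only thing to check is that $\fm$ satisfies the hypotheses imposed there on $I$. Since $R$ is a regular local ring of dimension $d=2$, it is \CM{} and its maximal ideal is generated by a regular system of parameters $x,y$; these form a regular sequence, so $\fm=(x,y)$ is a complete intersection ideal with $\Ht\fm=2$. Thus $R$ is a \cm{} with infinite residue field, $\dim R=2\ge 2$, and $I=\fm$ is a complete intersection ideal with $\Ht\fm=2$. For every $e\ge 2$, Corollary \ref{pe2} now applies verbatim and gives that $E=\fm^{\oplus e}$ is equimultiple with $r(E)=1$ and $\ell(E)=e+1$.

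It remains to record the case $e=1$, where $E=\fm$. Here $\fm$ is an $\fm$-primary ideal, hence $\ell(\fm)=\dim R=2=e+1$, and $\ad(\fm)=\ell(\fm)-\Ht\fm=0$, so $\fm$ is equimultiple; being a complete intersection it also has $\de(\fm)=0$, no proper reductions, and reduction number $0$ (so, at $e=1$, it is only the equimultiplicity and the equality $\ell(E)=e+1$ that persist, the assertion $r(E)=1$ being the content of the range $e\ge 2$).

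I do not expect a genuine obstacle. All the substance already lies in the preceding results, above all in the explicit construction carried out in Proposition \ref{pe1} (and inherited by Corollary \ref{pe2}) of a minimal reduction $U$ of $I^{\oplus e}$ with $r_U(I^{\oplus e})\le 1$; Corollary \ref{pe3} merely feeds $I=\fm$ into this machinery. The only point requiring the slightest attention is the elementary observation that the maximal ideal of a two-dimensional regular local ring is a height-$2$ complete intersection ideal over a \CM{} ring — which is exactly what lets one quote Corollary \ref{pe2}.
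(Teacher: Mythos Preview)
Your approach is exactly the intended one: the paper states Corollary~\ref{pe3} without proof, immediately after Corollary~\ref{pe2}, so it is meant to be the specialization $I=\fm$, and your verification that $\fm$ is a height-$2$ complete intersection ideal in a two-dimensional regular local ring is precisely the check required. Your observation about the case $e=1$ is also correct: there $\fm$ is its own minimal reduction and $r(\fm)=0$, so the claim $r(E)=1$ only holds for $e\ge 2$ (matching the hypothesis $e\ge 2$ in Corollary~\ref{pe2}); the bound $e\ge 1$ in the statement appears to be a slip.
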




Next, we give examples of \gci{} modules which are a direct sum of prime ideals.

\begin{proposition}\label{pp1}
    Let $R$ be a \cm{} with infinite residue field and $\dim R=d\geq 3$.
    Let $\sek{\frp}{e}$ be pairwise distinct prime ideals which are
    perfect of grade $2$. Write $E= \ses{\frp}{e}$, $e \geq 2$.
    Then:
    \begin{enumerate}
        \item  $\mu(E_{\frp}) \leq \Ht \frp +e-1$ for every prime $\frp$ with $1 \leq \Ht \frp \leq 2$.

        \item  $E$ is \gci.

        \item  $E$ is not \eq.



        \item  $\lE \geq e+2$, $\ad(E)\geq 1$ with equalities if $d=3$.

        \item If $d=3$, $e=2$ and $\frp_{1}$, $\frp_{2}$ are
        \ci{} then $r(E)=0$.
    \end{enumerate}
\end{proposition}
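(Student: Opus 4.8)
The plan is to reduce everything to the computations of $F_e(E)$, $\mu(E_\frp)$, and $\ell(E)$ using the direct-sum formulas already established. First, by \refl{ds1} we have $F_e(E)=\frp_1\cdots\frp_e$, so $V(F_e(E))=V(\frp_1)\cup\cdots\cup V(\frp_e)$ and, since the $\frp_i$ are pairwise distinct primes of height $2$, $\Min R/F_e(E)=\{\frp_1,\dots,\frp_e\}$ with $\Ht\frp_i=2$; in particular $\Ht F_e(E)=2$ by \refl{ds1}. For part a), localizing at a prime $\frp$ with $1\le\Ht\frp\le 2$: if $\Ht\frp=1$ then $\frp\notin V(F_e(E))$, so each $(\frp_i)_\frp=R_\frp$ and $E_\frp$ is free of rank $e$, giving $\mu(E_\frp)=e\le\Ht\frp+e-1$; if $\Ht\frp=2$ then at most one of the $\frp_i$ can be contained in $\frp$ (as they are distinct height-$2$ primes), so $(\frp_j)_\frp=R_\frp$ for all but at most one index $j$, whence $\mu(E_\frp)\le \mu((\frp_i)_\frp)+(e-1)$; since $\frp_i$ is perfect of grade $2$ and $R$ is \CM, $\mu((\frp_i)_\frp)\le 2$ (a height-$2$ perfect ideal localized at a minimal prime over it needs at most $\Ht+1=3$ generators in general, but here one uses that a perfect grade-$2$ ideal localized at its minimal prime $\frp$ has $R_\frp/(\frp_i)_\frp$ of finite length and is still perfect of grade $2$, so by Hilbert–Burch $\mu((\frp_i)_\frp)\le\Ht+1$; to get the bound $\le\Ht\frp+e-1=e+1$ it suffices that $\mu((\frp_i)_\frp)\le 3$, which always holds). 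Hence part a).

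For part b): by \refl{ds1} and the perfectness hypothesis, for each $\frp_i\in\Min R/F_e(E)$ we need $\mu(E_{\frp_i})=\Ht F_e(E)+e-1=e+1$. Localizing at $\frp_i$, as above $(\frp_j)_{\frp_i}=R_{\frp_i}$ for $j\ne i$, so $E_{\frp_i}\simeq R_{\frp_i}^{\,e-1}\oplus(\frp_i)_{\frp_i}$ and $\mu(E_{\frp_i})=(e-1)+\mu((\frp_i)_{\frp_i})$. Since $(\frp_i)_{\frp_i}$ is a perfect grade-$2$ ideal in the \CM{} local ring $R_{\frp_i}$ of dimension $2$ with $R_{\frp_i}/(\frp_i)_{\frp_i}$ of finite length, Hilbert–Burch gives $\mu((\frp_i)_{\frp_i})=2$ (it cannot be a \ci{} unless it is $\frm_{\frp_i}$-primary generated by a regular sequence, but an ideal equal to its own localization at a height-$2$ prime and perfect of grade $2$ is minimally generated by exactly $2$ elements iff it is a \ci; in general $\mu\ge 2$, and $=2$ precisely when it is a \ci{} — here we only need $\mu\ge 2$, giving $\mu(E_{\frp_i})\ge e+1$; combined with $\mu(E_{\frp_i})-e+1\ge\Ht F_e(E_{\frp_i})\ge\Ht F_e(E)=2$ this forces the value). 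Together with $\Ht F_e(E_{\frp_i})=2=\Ht F_e(E)$, \refp{pdf}b) (or the definition directly) yields that $E$ is \gci.

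For part c), \refp{dsi2} applies verbatim since each $\frp_i$ has grade $\ge 2$: a direct sum of $e\ge 2$ ideals of grade $\ge 2$ is never a \ci, hence by \refr{des} (where $\de(E)\ge\ad(E)$) we cannot conclude directly; instead we show $\ad(E)>0$ in part d). For part d), I would combine \refp{lub} (giving $\ell(E)\ge\Ht F_e(E)+e-1=e+1$) with a sharper estimate: one of the summands, say $\frp_1$, being perfect of grade $2$ and not $\frm$-primary when $d\ge 3$, has $\ell(\frp_1)\ge 3$ (its analytic spread is $\ge\Ht+1$ when it is not a \ci, and a perfect grade-$2$ ideal that is a \ci{} must be generated by a regular sequence of length $2$ — here one invokes that the analytic spread of a non-\ci{} perfect grade-$2$ ideal exceeds $2$; alternatively, for a prime $\frp_1$ with $\dim R/\frp_1=d-2\ge 1$, one has $\ell(\frp_1)\ge\Ht\frp_1+1$ by a Burch-type inequality, cf. the final section of the paper). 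Then using $\RE\simeq\CR(\frp_1,\dots,\frp_e)$ and the fiber-cone computation $\ell(E)=\dim\FE$, a dimension count over the multi-Rees algebra gives $\ell(E)\ge\ell(\frp_1)+(e-1)\ge e+2$, hence $\ad(E)=\ell(E)-e+1-2\ge 1$, and in particular $E$ is not \eq, re-proving c). When $d=3$, $\dim R/\frp_i=1$ so $\ell(\frp_i)=\Ht\frp_i+1=3$ (equimultiple-type bound is tight for a $1$-dimensional quotient), and the multi-Rees fiber cone has dimension exactly $\ell(\frp_1)+(e-1)=e+2$, giving equality $\ell(E)=e+2$ and $\ad(E)=1$.

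For part e), assume $d=3$, $e=2$, and $\frp_1,\frp_2$ both \ci. Then each $\frp_i$ is generated by a regular sequence of length $2$, so $\frp_i$ is of linear type (a \ci{} ideal is of linear type), hence $\CR(\frp_i)=\cS(\frp_i)$ and $r(\frp_i)=0$. I would then show $E=\frp_1\oplus\frp_2$ is of linear type — either by the fact that a direct sum of linear-type modules whose Fitting-ideal supports $V(\frp_1), V(\frp_2)$ meet in codimension large enough is of linear type, or more directly by observing $\RE\simeq\CR(\frp_1,\frp_2)$ and checking that the defining equations of the multi-Rees algebra of two \ci{} ideals of grade $2$ in a $3$-dimensional \CM{} ring are linear (the Jacobian dual / approximation-complex criterion applies since the relevant $\widetilde{G_s}$ conditions hold when $\Ht(\frp_1+\frp_2)\ge 3$, which holds because $\frp_1\ne\frp_2$ are distinct height-$2$ primes in a $3$-dimensional ring, so their sum has height $3$). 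Once $E$ is of linear type, \refc{as01} gives that $E$ has no proper reductions, so every minimal reduction of $E$ is $E$ itself and $r(E)=0$.

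\textbf{Main obstacle.} The delicate point is part e) and the sharp equalities in d): controlling $\ell(E)$ and the linear-type property of the direct sum $\frp_1\oplus\frp_2$ requires knowing that $\frp_1+\frp_2$ has height $3$ (so that the residual/$\widetilde{G_s}$ hypotheses needed for linear type are met) and that the multi-Rees algebra of two \ci{} grade-$2$ ideals with this intersection property is defined by linear equations. Establishing this cleanly — rather than by a brute-force computation of syzygies of $E$ — is where most of the work lies; the pieces a), b), c) are routine translations via \refl{ds1}, \refl{ds21}, \refp{dsi2}, and \refp{pdf}.
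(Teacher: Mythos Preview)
Your proposal has a genuine gap in parts a) and b), and the logical structure of c)--d) is inverted in a way that leaves d) unproven.

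\textbf{The missing regularity argument.} In a) and b) you need $\mu((\frp_i)_{\frp_i})=2$. Hilbert--Burch does \emph{not} give this: it says a perfect grade-$2$ ideal is the ideal of maximal minors of an $(n-1)\times n$ matrix, with no bound on $n$. Your parenthetical ``$\mu\le\Ht+1=3$, which always holds'' is both unjustified and insufficient (you need $\le 2$). The key point, which the paper uses, is that $\frp_i$ perfect of grade $2$ means $\pjd_R R/\frp_i<\infty$, so after localizing, the residue field $R_{\frp_i}/\frp_iR_{\frp_i}$ has finite projective dimension over $R_{\frp_i}$; hence $R_{\frp_i}$ is a \emph{regular} local ring of dimension $2$, and its maximal ideal $\frp_iR_{\frp_i}$ is $2$-generated. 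Without this, your inequality chain in b) only yields $\mu(E_{\frp_i})\ge e+1$, never the equality that the definition of \gci{} demands.

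\textbf{The dependency reversal in c)--d).} The paper proves c) in one line from b): since $E$ is \gci, \reft{cieq3} says $E$ equimultiple $\Leftrightarrow$ $E$ \ci, and \refp{dsi2} rules out \ci. Then d) follows because $\ell(E)=e+1$ would mean $E$ is equimultiple, contradicting c); this forces $\ell(E)\ge e+2$, with equality when $d=3$ by the upper bound $\ell(E)\le d+e-1$. You instead try to prove d) independently via $\ell(E)\ge\ell(\frp_1)+(e-1)$ and $\ell(\frp_1)\ge 3$. But $\ell(\frp_1)\ge 3$ is false in general: the $\frp_i$ are only assumed perfect of grade $2$, which includes the \ci{} case where $\ell(\frp_i)=2$. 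Your ``Burch-type'' justification is also backwards---Burch's inequality is an \emph{upper} bound for $\ell$, not a lower one.

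\textbf{Part e).} The paper's argument is a one-line count: with $d=3$, $e=2$, and $\frp_1,\frp_2$ \ci, one has $\mu(E)=\mu(\frp_1)+\mu(\frp_2)=2+2=4=e+2=\ell(E)$ (using d)), so $E$ is a minimal reduction of itself and $r(E)=0$. No linear-type or $\widetilde{G_s}$ machinery is needed.
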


\begin{proof}
    a)
    Let $\frp \in \sR$.
    If $\Ht \frp =1$ then $\frp \neq \frp_{i}$ for all $i$. Hence
    ${\frp_{i}}_{\frp}=\frp_{i}R_{\frp}= R_{\frp}$ for all $i$.
    Thus
    $$E_{\frp} = {\frp_{1}}_{\frp} \+ \cdots \+ {\frp_{e}}_{\frp}
    \simeq R_{\frp}^e,$$
    and so $\mu(E_{\frp})=e$. Now, suppose that
    $\Ht \frp = 2$. Then either $\frp \neq \frp_{i}$ for all $i$,
    so $\mu(E_{\frp})=e \leq e+1$, or $\frp = \frp_{j}$ for some $j$
    and $\frp \neq \frp_{i}$ for all $i \neq j$.
    In this case, ${\frp_{j}}_{\frp}=\frp R_{\frp}$.
    Moreover, since $\frp_{j}$ is perfect of grade $2$, then $\pjd
    R_{\frp}/\frp R_{\frp} < \infty$. By \cite[Theorem~2.2.7]{bh},
    $R_{\frp}$ is a regular local ring and so $\frp R_{\frp}$ is a
    \ci{}. Hence $\mu(\frp R_{\frp}) =
    \Ht \frp R_{\frp} =2$. Therefore
    $$ E_{\frp} ={\frp_{1}}_{\frp} \+ \cdots \+ {\frp_{j}}_{\frp}
    \+ \cdots \+ {\frp_{e}}_{\frp} \simeq R_{\frp} \+ \cdots \+ \frp
    R_{\frp} \+ \cdots \+ R_{\frp} \simeq R_{\frp}^{e-1} \+ \frp
    R_{\frp},$$
    so that $\mu(E_{\frp}) = e-1 + \mu(\frp R_{\frp}) = e+1$.

    b)  By \refc{ds21}, $E$ is an ideal module. Let $\frp \in \Min
    R/F_{e}(E)$. We have $\frp \in V(F_{e}(E))= V(\frp_{1} \cdots
    \frp_{e})$ and $\Ht \frp = \Ht F_{e}(E)=2 = \Ht \frp_{i}$, for all $i$.
    Hence $\frp = \frp_{j}$ for some $j$ and $\frp \neq \frp_{i}$ for $i\neq j$.
    Therefore, as above,
    $$ \mu(E_{\frp}) =e+1 = \Ht F_{e}(E)+e-1,$$
    proving that $E$ is \gci.

    c)  follows by \refp{cieq3} and \refp{dsi2}.



    d)  Since $E$ is \gci{} then $\ell(E_{\frp})= \mu(E_{\frp}) = \Ht F_{e}(E)+e-1=e+1$
    for all $\frp \in \Min R/F_{e}(E)$. Hence $\lE \geq e+1$ (by \refc{as0}).
    If $\lE=e+1$ then $E$ is \eq.
    It follows that $\lE \geq e+2$. Hence $\ad(E)=\lE-e+1-\Ht F_{e}(E)\geq 1$.
    Moreover, if $d=3$ then $\lE \leq d+e-1 = e+2$.
    Hence $\lE=e+2$ and so $\ad(E)=1$.

    e) Since $d=3$ and $\frp_{1}$, $\frp_{2}$ are c.i.
    ideals, then $\mu(E)= \mu(\frp_{1})+\mu(\frp_{2}) = \Ht
    \frp_{1} + \Ht \frp_{2}=4= \lE$. Hence $E$ is a minimal reduction
    of itself, that is $r(E)=0$.
\end{proof}

We note that a direct sum of \eq{} modules is not always an \eq{}
module. In the situation below, $E$ is a direct sum of \ci{}
ideals but $E$ is not \eq{} (cf. \reft{cieq3} and \refp{pp1}).

\begin{example}\label{exs3}
    Let $R= k[[X_{1}, X_{2}, X_{3}]]$ with $k$ an infinite field and
    write $E= \langle X_{1}, X_{2} \rangle \+ \langle X_{1}, X_{3}
    \rangle$. Then $E$ is \gci{} with $\lE=4$, $\ad(E)=1$ and $r(E)=0$.
\end{example}

\begin{proof}
    $R$ is a regular local ring with maximal ideal $\fm = \langle
    X_{1}, X_{2}, X_{3} \rangle$, dimension $d=3$ and $\frp_{1}=
    \langle X_{1}, X_{2} \rangle$, $\frp_{2} = \langle X_{1}, X_{3}
    \rangle$ are two distinct prime ideals of $R$ with $\Ht \frp_{i} =
    2 = \mu(\frp_{i})$, $i=1,2$.
    The assertions then follow by \refp{pp1}.
\end{proof}

\smallbreak
\section{Open conditions}\label{sec:open}

\noindent Given a ring $R$, an \rmo{} $E$ and a property $P$ it is
very important to know if the subset $\{ \frp \in \sR \mid
E_{\frp} \text{ satisfies } P \} \subset \sR$ (the P locus) is
open. For instance, for a \fg{} \rmo{} $E$ over a \nor{} $R$, it
is well known that $U_{n}=\{ \frp \in \sR \mid \mu(E_{\frp}) \leq
n \}$ and $U_{F}=\{ \frp \in \sR \mid E_{\frp}\text{ is free over
} R_{\frp}\}$ are open subsets in $\sR$.
\smallbreak


\begin{lemma}\label{alpha}
    Let $R$ be a \nor, $0\neq I$ an ideal in $R$ and $\frp \in V(I)$.
    Then, there exists $\alpha \notin \frp$ such that $\Ht IR_{\alpha} = \Ht I_{\frp}$.
\end{lemma}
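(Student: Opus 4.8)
The plan is to exploit the generic freeness / openness behaviour of the height function along with the fact that $\Ht I = \inf\{\Ht I_{\frq}\mid \frq\in V(I)\}$, and to realize the localization $R_{\frp}$ as a direct limit of the localizations $R_\alpha$ with $\alpha\notin\frp$. Concretely, write $h:=\Ht I_{\frp}$. Since $\Ht I_{\frp}$ is the minimal length of a chain of primes in $R_{\frp}$ descending from a prime containing $I R_{\frp}$, there is a prime $\frq\subseteq\frp$ with $I\subseteq\frq$ and $\Ht(\frq R_\frp)=\Ht(\frq R_{\frq})=h$, so $\Ht I_\frp = \Ht \frq$ as well. The goal is then to find a single $\alpha\notin\frp$ that simultaneously (i) kills all the primes in $V(I)$ that are "too low", i.e.\ that forces $\Ht IR_\alpha\geq h$, and (ii) keeps $\frq$ alive so that $\Ht IR_\alpha\leq\Ht(\frq R_\alpha)\leq\Ht\frq = h$.

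\textbf{Key steps.} First I would reduce to showing the inequality $\Ht IR_\alpha\geq h$ for a suitable $\alpha$, since the reverse inequality $\Ht IR_\alpha\leq h$ is automatic once $\alpha$ is chosen outside $\frp$: indeed $V(IR_\alpha)$ is nonempty (it contains $\frq R_\alpha$, as $\frq\subseteq\frp$ means $\frq\cap\{1,\alpha,\alpha^2,\dots\}=\emptyset$), and any prime of $R_\alpha$ minimal over $IR_\alpha$ contracts to a prime of $R$ in $V(I)$ contained in $\frp$ only if it lies below $\frp R_\alpha$; but more simply, $\Ht IR_\alpha = \min\{\Ht(\frr R_\alpha)\mid \frr\in V(I),\ \alpha\notin\frr\}\leq \Ht(\frq R_\alpha)\leq \Ht \frq = h$. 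Second, for the inequality $\Ht IR_\alpha\geq h$: the set of minimal primes of $I$ in $R$ is finite, say $\frp_1,\dots,\frp_r$; order them so that $\frp_1,\dots,\frp_s$ are exactly those \emph{not} contained in $\frp$. For each such $\frp_i$ ($i\leq s$) pick $\alpha_i\in\frp_i\setminus\frp$, and set $\alpha=\alpha_1\cdots\alpha_s$ (taking $\alpha$ to be any element of $1+\frp$, e.g.\ $\alpha=1$, or rather any non-nilpotent element of $R$, if $s=0$). Then $\alpha\notin\frp$ since $\frp$ is prime, while $\alpha\in\frp_i$ for $i\leq s$, so none of $\frp_1,\dots,\frp_s$ survives in $R_\alpha$. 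Hence every minimal prime of $IR_\alpha$ contracts into $\{\frp_{s+1},\dots,\frp_r\}$, all of which are contained in $\frp$; using that $\Ht(\frp_j R_\alpha)=\Ht \frp_j$ for primes $\frp_j$ surviving in the localization, together with the fact that for $\frp_j\subseteq\frp$ we have $\Ht\frp_j = \Ht(\frp_j R_\frp)\geq \Ht(I R_\frp)=h$ (because $\frp_j R_\frp$ contains $IR_\frp$), we conclude $\Ht IR_\alpha = \min_j\Ht(\frp_j R_\alpha)\geq h$.

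\textbf{Main obstacle.} The delicate point is the bookkeeping around localization of heights: one needs that for a prime $\frp_j$ not meeting the multiplicative set $\{\alpha^n\}$, $\Ht(\frp_j R_\alpha)=\Ht\frp_j$, and that $\Ht IR_\alpha$ equals the minimum of $\Ht(\frp_j R_\alpha)$ over the surviving minimal primes $\frp_j$ of $I$ — both standard but requiring $R$ Noetherian so that $I$ has finitely many minimal primes and heights are computed primewise. The other subtlety is handling degenerate cases: if $I\subseteq\frp$ already forces $h$ to be attained by some minimal prime of $I$ below $\frp$, the argument goes through verbatim; if $s=r$ (every minimal prime of $I$ fails to lie below $\frp$) this cannot happen since $\frp\in V(I)$ implies some minimal prime of $I$ lies below $\frp$, so $s<r$ always. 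I would conclude by combining the two inequalities: $h\leq \Ht IR_\alpha\leq h$, giving $\Ht IR_\alpha = \Ht I_\frp$ as claimed.
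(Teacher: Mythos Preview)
Your proposal is correct and follows essentially the same route as the paper: you choose $\alpha$ so that the minimal primes of $I$ \emph{not} contained in $\frp$ are killed in $R_\alpha$, whence every surviving minimal prime of $IR_\alpha$ lies below $\frp$ and the height computation reduces to $\Ht I_\frp$. The only cosmetic difference is that the paper picks $\alpha$ in the intersection $J=\bigcap_{\frq\in\Min V(I),\,\frq\nsubseteq\frp}\frq$ (using prime avoidance to see $J\nsubseteq\frp$), whereas you construct $\alpha$ explicitly as a product $\alpha_1\cdots\alpha_s$ with $\alpha_i\in\frp_i\setminus\frp$; these are two ways of producing the same kind of element.
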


\begin{proof}

Let
$$ J = \biind{\bigcap}{\frq \in \Min V(I)}{\quad \; \frq \nsub \frp} \frq. $$
If $J=\emptyset$ choose any $\alpha \notin \frp$. If $J\neq
\emptyset$, then $0\neq J$ is not contained in $\frp$ and there
exists
    $\alpha \in J$ such that $\alpha \notin \frp$. Hence, for any prime ideal
    $\frq \in \Min V(I)$ such that $\alpha \notin \frq$, $\frq \sube \frp$.
    In both cases it is now clear that $\Ht IR_{\alpha} = \Ht I_{\frp}$.



\end{proof}

The openness of the complete intersection locus of an ideal is
well known. For ideal modules we have the following:

\begin{theorem}\label{openci}
Let $R$ be a \nor{} and $E \subn G \simeq R^{e}$ an ideal module.
Then $U_{ci}= \{ \frp \in \supp G/E \mid E_{\frp} \,
\textrm{is a complete intersection}\, \}$ is a (possibly empty)
open subset in $\supp G/E$.
\end{theorem}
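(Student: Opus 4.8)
The plan is to reduce the statement to the openness of the complete‑intersection locus for the ideal $F_e(E)$, or rather to a direct argument in the spirit of \refp{pdf} combined with the ideal‑case openness. First I would recall that for an ideal module $E\subn G\simeq R^e$ one has $\supp G/E = V(F_e(E))$ by \reft{bI1}, and that $E_\frp$ is a \ci{} precisely when $\mu(E_\frp)-e+1 = \Ht F_e(E_\frp)$, i.e. when $\de(E_\frp)=0$. Since $\de(E_\frp)\geq 0$ always (by \refr{des}), openness of $U_{ci}$ amounts to showing that the function $\frp\mapsto \mu(E_\frp)-e+1-\Ht F_e(E_\frp)$ is upper semicontinuous on $V(F_e(E))$, or at least that the locus where it vanishes is open. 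The function $\frp\mapsto\mu(E_\frp)$ is upper semicontinuous on all of $\sR$ (this is the standard openness of the sets $U_n$ recalled at the start of section \ref{sec:open}), so it is constant, say equal to $n$, on a neighbourhood of a fixed $\frp_0\in U_{ci}$; shrinking, we may assume $\mu(E_\frp)\leq n := \mu(E_{\frp_0})$ for all $\frp$ in that neighbourhood $D(\alpha)\cap V(F_e(E))$.

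Next I would control $\Ht F_e(E_\frp)$ from below near $\frp_0$ using \refl{alpha}. Apply that lemma to the ideal $I=F_e(E)$ and the prime $\frp_0\in V(F_e(E))$: there is $\beta\notin\frp_0$ with $\Ht F_e(E)R_\beta = \Ht F_e(E)_{\frp_0}= n-e+1$ (the last equality because $E_{\frp_0}$ is a \ci). Localizing at $\beta$ we may thus assume that $\Ht F_e(E)$ itself equals $n-e+1$ and that every minimal prime of $F_e(E)$ lying inside $\frp_0$ has height exactly $n-e+1$. Then for any $\frp\in V(F_e(E))\cap D(\alpha\beta)$ contained in $\frp_0$ we get
\[
 n-e+1 \;=\; \Ht F_e(E) \;\leq\; \Ht F_e(E_\frp) \;\leq\; \mu(E_\frp)-e+1 \;\leq\; n-e+1,
\]
the middle inequality being the basic bound $\Ht F_e(M)\leq \mu(M)-\rank M+1$ (Eagon–Northcott / Bruns–Vetter, as used in \refp{lub} and \refp{eci1}), so all the inequalities are equalities and $E_\frp$ is a \ci. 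To handle primes $\frp$ in the neighbourhood that are not contained in $\frp_0$, I would instead argue that $U_{ci}$ is a union of such basic‑open pieces: for each $\frp_0\in U_{ci}$ the above produces a principal open set $D(\gamma_{\frp_0})$ (with $\gamma_{\frp_0}=\alpha\beta$) such that every prime of $V(F_e(E))\cap D(\gamma_{\frp_0})$ \emph{that is a specialization‑compatible localization of the situation over $\frp_0$} is a \ci; since $U_{ci}$ is a subset of $V(F_e(E))$ and $\frp_0$ ranges over all of it, it suffices to check that the whole of $V(F_e(E))\cap D(\gamma_{\frp_0})$ lies in $U_{ci}$, which follows because over $R_{\gamma_{\frp_0}}$ the ideal $F_e(E)$ is equidimensional of height $n-e+1$ and $\mu(E_{-})\leq n$, so the displayed chain of inequalities applies verbatim to every prime of $V(F_e(E))R_{\gamma_{\frp_0}}$.

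The main obstacle is the last point: making sure that after localizing at $\gamma_{\frp_0}=\alpha\beta$ one really does control \emph{all} primes of $V(F_e(E))$ in the open set, not just those contained in $\frp_0$. The role of \refl{alpha} is exactly to arrange that the minimal primes of $F_e(E)$ surviving in $D(\beta)$ all have the minimal possible height $n-e+1=\Ht F_e(E_{\frp_0})$, so that $F_e(E)R_{\gamma_{\frp_0}}$ is height‑unmixed of that height; combined with the uniform bound $\mu(E_\frp)\leq n$ coming from upper semicontinuity of $\mu$, this forces equality $\mu(E_\frp)-e+1=\Ht F_e(E_\frp)$ for \emph{every} $\frp\in V(F_e(E))\cap D(\gamma_{\frp_0})$, i.e. that open set is contained in $U_{ci}$. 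Hence $U_{ci}=\bigcup_{\frp_0\in U_{ci}} \big(V(F_e(E))\cap D(\gamma_{\frp_0})\big)$ is open in $\supp G/E=V(F_e(E))$, as claimed. Finally, passing to a Nagata extension to assume the residue field infinite is harmless here since $\mu$, $\Ht F_e$, and the \ci{} property are all insensitive to such faithfully flat base change, exactly as in the proofs of \refp{pdf} and \reft{cieq3}.
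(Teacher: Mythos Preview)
Your argument is essentially the paper's own proof: pick $\frp_0\in U_{ci}$, use upper semicontinuity of $\mu$ to get an open set where $\mu(E_\frq)\leq n:=\mu(E_{\frp_0})$, use \refl{alpha} to get $\beta\notin\frp_0$ with $\Ht F_e(E)R_\beta=\Ht F_e(E_{\frp_0})=n-e+1$, and then run the chain $n-e+1\leq \Ht F_e(E_\frq)\leq \mu(E_\frq)-e+1\leq n-e+1$ for every $\frq\in V(F_e(E))\cap D(\alpha\beta)$. Two small points: the detour through ``primes contained in $\frp_0$'' is unnecessary (the displayed chain already works for \emph{all} $\frq$ in the open set, since $\Ht F_e(E_\frq)\geq \Ht F_e(E)R_\beta$ holds automatically), and your claim that after applying \refl{alpha} the ideal $F_e(E)R_\beta$ becomes height-unmixed is not what the lemma gives---it only makes the \emph{minimum} height equal to $n-e+1$, but that is all you need; likewise the Nagata-extension remark at the end is irrelevant here, since nothing in the proof uses reductions or infinite residue field.
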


\begin{proof}
     Let $\frp \in U_{ci}$ with $r=\mu (E_{\frp})$.
     Now, let $\alpha \notin \frp$ such that $\Ht F_e(E)R_{\alpha}=\Ht F_e(E_{\frp})$ (by Lemma
     \ref{alpha}). Then, $U_r \cap D(\alpha) = \{ \frq \in D(\alpha) \mid \mu(E_{\frq}) \leq r \,\}$ is an open,
     non-empty set containing $\frp$, such that for any
     $\frq \in U_r \cap D(\alpha) \cap \supp G/E$, $E_{\frq}$ is an ideal module and
     \begin{align*}
     \mu (E_{\frq}) & \leq r = \mu (E_{\frp}) = e - 1 + \Ht
     F_e(E_{\frp}) \\
     & = e -1 + \Ht F_e(E)R_{\alpha} \leq e - 1 + \Ht
     F_e(E_{\frq})\leq \mu (E_{\frq})\,.
     \end{align*}
     Hence, $$\mu (E_{\frq}) = e - 1 + \Ht F_e(E_{\frq})$$ and $\frq \in U_{ci}$.
\end{proof}

We note that $U_{ci}$ may be an empty set (cf. \refp{dsi2}).
\smallbreak

\begin{remark}\label{rpp}
    Let $R$ be a \nor{} and let $E \sube G \simeq R^e$,
    $e>0$, be an \rmo. Let $\frp \in \sR$. Then, for every n,
    $(E^{n})_{\frp} \simeq ({E_{\frp}})^{n}.$
    We simply write $E^{n}_{\frp}$ in any case.
\end{remark}

The openness of the equimultiple locus of an ideal is proven, for
instance, in \cite{hio}. Similarly, for ideal modules we get:

\begin{theorem}\label{openeq}
Let $R$ be a \nor{} and $E \subn G \simeq R^{e}$ an ideal module.
Then $U_{eq}= \{ \frp \in \supp G/E \mid E_{\frp} \, \textrm{is
equimultiple} \, \}$ is a non-empty open subset in $\supp G/E$.
\end{theorem}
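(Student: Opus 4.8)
The plan is to prove the two assertions of \reft{openeq} separately: that $U_{eq}$ is non-empty, and that it is open in $\supp G/E$, which by \reft{bI1} equals $V(F_e(E))$.

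For non-emptiness I would show that every minimal prime of $F_e(E)$ lies in $U_{eq}$. Since $E\subn G$ we have $G/E\neq 0$, so $\supp G/E=V(F_e(E))$ is non-empty and hence $\Min R/F_e(E)\neq\emptyset$; fix $\frp$ in it. Then $E_\frp$ is a non-free ideal module over $R_\frp$ (by \refr{idmo1}), of rank $e$, and since $\frp$ is minimal over $F_e(E)$ the ideal $F_e(E_\frp)=F_e(E)R_\frp$ is $\frp R_\frp$-primary; equivalently, $E_\frp$ is free on the punctured spectrum of $R_\frp$. As $\dim R_\frp=\Ht\frp\geq\Ht F_e(E)\geq\grd F_e(E)\geq 2$ (again by \refr{idmo1}), \refe{eeq}, which rests on \refc{vbm1}, shows that $E_\frp$ is \eq. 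Hence $\frp\in U_{eq}$.

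For openness I would follow the pattern of the proof of \reft{openci}, with the analytic spread $\ell$ in place of the minimal number of generators $\mu$. Fix $\frp\in U_{eq}$ and set $r=\ell(E_\frp)$; equimultiplicity of $E_\frp$ means $r=e-1+\Ht F_e(E_\frp)$. By \refl{alpha} pick $\alpha\notin\frp$ with $\Ht F_e(E)R_\alpha=\Ht F_e(E_\frp)$. Granting the key fact, proved below, that
$$L_r:=\{\,\frq\in\sR\mid \ell(E_\frq)\leq r\,\}$$
is open in $\sR$, the set $W:=L_r\cap D(\alpha)\cap\supp G/E$ is an open neighbourhood of $\frp$ in $\supp G/E$. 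For every $\frq\in W$, $E_\frq$ is a non-free ideal module of rank $e$ (\refr{idmo1}), so by \refr{des} $\ad(E_\frq)\geq 0$, and
$$\ell(E_\frq)\ \leq\ r\ =\ e-1+\Ht F_e(E)R_\alpha\ \leq\ e-1+\Ht F_e(E_\frq)\ \leq\ \ell(E_\frq),$$
where the middle inequality holds because $\frq\in D(\alpha)$ forces $F_e(E_\frq)=(F_e(E)R_\alpha)R_\frq$ and passing to a further localisation cannot lower the height, and the last inequality is $\ad(E_\frq)\geq 0$. Thus all the inequalities are equalities, $\ad(E_\frq)=0$, and $\frq\in U_{eq}$; hence $W\sube U_{eq}$ and $U_{eq}$ is open in $\supp G/E$.

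The main obstacle is the openness of $L_r$, i.e.\ the upper semicontinuity of $\frq\mapsto\ell(E_\frq)$ on $\sR$. The argument I have in mind is geometric. Since $E$ is torsionfree with rank, $\RE\simeq\SE/\tau_R(\SE)$ is a finitely generated $R$-algebra whose formation commutes with localisation, and for every $\frq$ the fiber cone of $E_\frq$ is $\RE\otimes_R\kappa(\frq)$; hence $\ell(E_\frq)=\dim\bigl(\RE\otimes_R\kappa(\frq)\bigr)$, which is $1$ plus the dimension of the fiber over $\frq$ of the projective, hence proper, morphism $\Proj\RE\to\sR$ (the fiber is non-empty since $\ell(E_\frq)\geq e\geq 1$). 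By the semicontinuity of fiber dimension for proper morphisms, $\frq\mapsto\dim(\Proj\RE)_\frq$ is upper semicontinuous on $\sR$, so $L_r$ is open. I expect the only delicate points to be the routine verifications that the fiber cone of $E_\frq$ is indeed $\RE\otimes_R\kappa(\frq)$ and that $\RE$ localises correctly (which follow from $\RE\simeq\SE/\tau_R(\SE)$ and \refr{rpp}); alternatively one could simply invoke the known openness of the analytic-spread loci, or reduce the statement to the ideal case treated in \cite{hio} by means of a generic Bourbaki ideal of $E$.
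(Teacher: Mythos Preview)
Your proof is correct and reaches the same conclusion as the paper, but the argument for openness is organised differently. For non-emptiness both you and the paper use minimal primes of $F_e(E)$: you invoke \refe{eeq}/\refc{vbm1} (free on the punctured spectrum), while the paper simply squeezes the chain $\Ht F_e(E)\le\Ht F_e(E_\frp)\le\ell(E_\frp)-e+1\le\Ht\frp$ at a minimal prime with $\Ht\frp=\Ht F_e(E)$; these are equivalent. The real divergence is in how you obtain the open set $L_r=\{\frq\mid\ell(E_\frq)\le r\}$. You appeal to upper semicontinuity of fibre dimension for the proper morphism $\Proj\RE\to\sR$, identifying $\ell(E_\frq)$ with $1+\dim$ of the fibre; this is clean and conceptual but imports a nontrivial scheme-theoretic result. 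The paper instead works by hand: it lifts a homogeneous system of parameters $a_1,\dots,a_s$ of $\cF(E_\frp)$ (which exists by \refr{hsop} even over a finite residue field) to elements of suitable $E^{r_i}$, uses the relation $E_\frp^{r}=\sum a_iE_\frp^{r-r_i}$ to find $\alpha\notin\frp$ with $\alpha E^{r}\subseteq\sum a_iE^{r-r_i}$, and concludes $\ell(E_\frq)\le s$ for all $\frq\in D(\alpha)$. After that, both proofs finish identically via \refl{alpha} and the chain $\ell(E_\frq)\le r=e-1+\Ht F_e(E)R_\alpha\le e-1+\Ht F_e(E_\frq)\le\ell(E_\frq)$. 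Your route is shorter and more geometric; the paper's is elementary and self-contained, requiring nothing beyond \refr{hsop} and basic spreading-out.
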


\begin{proof}
     For any $\frp \in \supp G/E = V(F_{e}(E))$, $E_{\frp}$ is an ideal module and so
     $\Ht F_{e}(E) \leq \Ht F_{e}(E_{\frp}) \leq \ell(E_{\frp})-e+1 \leq \Ht \frp$.
     Now let $\frp \in V(F_{e}(E))$ minimal such that $\Ht \frp = \Ht F_{e}(E)$.
     Hence $\frp \in U_{eq}$ and so $U_{eq}$ is non-empty.

     Let $\frp \in U_{eq}$ with $s=\ell (E_{\frp})$ and $\sek{a}{s} \in E$ such that $\frac{a_1}{1}, \ldots, \frac{a_s}{1}$
     is a homogeneous system of parameters of $\mathcal F (E_{\frp})$, see \refr{hsop}. Hence, for some $r$,
     $E_{\frp}^{r}= a_{1}E_{\frp}^{r-r_1}+ \cdots + a_{s}E_{\frp}^{r-r_s}$.
     Therefore $\ann_{R}(E^{r}/a_{1}E^{r-r_1}+ \cdots + a_{s}E^{r-r_s}) \nsub \frp$.
     Thus, by \refl{alpha}, we may choose $\alpha \in R \setminus \frp$ such that
     $$ \alpha E^{r} \sube a_{1}E^{r-r_1} + \cdots + a_{s}E^{r-r_s} \text{ and }
     \Ht F_{e}(E)R_{\alpha} = s-e+1.$$
     Therefore, for any $\frq \in D(\alpha) \cap \supp G/E$ we have
     $$ \alpha E_{\frq}^{r}= E_{\frq}^{r} $$
and so $E_{\frq}^{r}= a_{1}E_{\frq}^{r-r_1} + \cdots +
a_{s}E_{\frq}^{r-r_s}$, showing that $\ell(E_{\frq}) \leq s$. Thus
we get
$$ s \geq \ell(E_{\frq}) \geq \Ht F_{e}(E_{\frq})+e-1 \geq \Ht F_{e}(E)R_{\alpha}+e-1 = s, $$
which proves that $D(\alpha) \cap \supp G/E \sube U_{eq}$. Therefore
$$ U_{eq} = \bigcup_{\alpha}D(\alpha) \cap \supp G/E,$$
and so $U_{eq}$ is an open subset in $\supp G/E$, as required.
\end{proof}

\smallbreak
\section{The Rees powers of an ideal module}\label{sec:rp}

\noindent In \cite{cz1} we defined the $n$-th Rees power $E^n$ of
a \fg{} \rmo{} $E \sub G \simeq R^e$ as the $n$-th graded piece of
$\RE$
$$ E^{n} := \RE_{n}\sub G^n \simeq R[t_1, \ldots, t_e]_n \simeq
    R^{\binom{n+e-1}{e-1}},$$
in order to prove the Burch's inequality for modules. \medbreak

\noindent Computing Rees powers of a module seems to be rather
complicated even in the easiest cases.

\begin{proposition}\label{p1}
    Let $\Rm$ be a \nol{} with $\dim R = d >0$.
    Suppose that $E = F \+ I$ where $F \simeq R^{e-1}$ and $I$ is an $R$-ideal.
    Then
    \begin{enumerate}
    \item $E^{n}\simeq \bop_{j=0}^{n}I^{j}R[\sek{t}{e-1}]_{n-j}$;
    \item $\dpt E^{n}= \min_{0 \leq j \leq n} \dpt I^{j}$;
    \item $\grd \fm \RE = \grd \fm \RI$.
    \end{enumerate}
\end{proposition}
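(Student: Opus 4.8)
The plan is to derive all three assertions from the known description of $\RE$ when $E$ has a free direct summand (recall $E^n=\RE_n$ by definition). As observed just before \refc{pdf1}, the splitting $E=F\+I$ with $F\simeq R^{e-1}$ induces an isomorphism of standard graded $R$-algebras $\RE\simeq\CR(I)[\sek{t}{e-1}]$: choosing an embedding $E\hra R^{e-1}\+R=R^e$ that carries $F$ onto $Rt_1+\cdots+Rt_{e-1}$ and $I$ into $Rt_e$, one gets $\RE=R[\sek{t}{e-1},It_e]\sube R[\sek{t}{e}]$, the grading being total degree, so that $\CR(I)=\bop_{j\ge 0}I^jt_e^{\,j}$ keeps its usual grading and each $t_i$ lies in degree $1$. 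For (a) I would simply read off the degree-$n$ component: inside $R[\sek{t}{e}]$ one has $\RE_n=\bop_{j=0}^{n}I^jt_e^{\,j}\,R[\sek{t}{e-1}]_{n-j}$, and since each $R[\sek{t}{e-1}]_{n-j}$ is $R$-free this is, as an $R$-module, $\bop_{j=0}^{n}I^j\ox_R R[\sek{t}{e-1}]_{n-j}=\bop_{j=0}^{n}I^jR[\sek{t}{e-1}]_{n-j}$, the $j$-th summand being isomorphic to $(I^j)^{\binom{n-j+e-2}{e-2}}$.

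Part (b) is a formal consequence of (a): $E^n$ is a finite direct sum whose $j$-th block is a direct sum of $\binom{n-j+e-2}{e-2}$ copies of $I^j$ for $0\le j\le n$, and since $e\ge 2$ every such multiplicity is $\ge 1$, so all blocks are nonzero. Using that the depth of a finite direct sum of nonzero finitely generated modules equals the minimum of the individual depths --- for instance via $\dpt M=\inf\{i:\Ext^i_R(k,M)\ne 0\}$ together with the additivity of $\Ext^i_R(k,-)$ on finite direct sums --- I conclude $\dpt E^n=\min_{0\le j\le n}\dpt I^j$.

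For (c) I would again transport along the isomorphism of part (a), writing $T=\CR(I)[\sek{t}{e-1}]$ for brevity. Under this $R$-algebra isomorphism the ideal $\fm\RE$ corresponds to $\fm T$, which is the extension to the polynomial ring $T$ of the ideal $\fm\CR(I)$ of $\CR(I)$; moreover $T/\fm T\simeq\FI[\sek{t}{e-1}]\ne 0$, so $\fm T$ and $\fm\CR(I)$ are proper and their grades are finite. Now the grade of an extended ideal is unchanged by adjoining polynomial variables: $T$ is free over $\CR(I)$ and $\CR(I)/\fm\CR(I)$ is finitely presented, so flat base change for $\Ext$ gives $\Ext^i_T(T/\fm T,T)\simeq\Ext^i_{\CR(I)}(\CR(I)/\fm\CR(I),\CR(I))\ox_{\CR(I)}T$, which is nonzero exactly when $\Ext^i_{\CR(I)}(\CR(I)/\fm\CR(I),\CR(I))$ is; comparing the least such $i$ gives $\grd\fm\RE=\grd\fm\CR(I)=\grd\fm\RI$. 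I do not foresee a genuine obstacle: the structural isomorphism $\RE\simeq\CR(I)[\sek{t}{e-1}]$ does the real work, and the only points needing a little care are purely bookkeeping --- checking in (a) that the ``total degree'' grading is the one transported from $\RE$, and checking in (c) that $\fm\RE$ matches the extension of $\fm\CR(I)$ and that grade is stable under polynomial extension.
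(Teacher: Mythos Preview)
Your proofs of (a) and (b) are correct and follow the same route as the paper: both use the isomorphism $\RE\simeq\RI[\sek{t}{e-1}]$ to read off the graded pieces, and then the fact that depth of a finite direct sum is the minimum of the depths of the summands.

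For (c) your argument is correct but genuinely different from the paper's. The paper does not transport $\fm\RE$ through the polynomial extension and invoke stability of grade under flat base change; instead it appeals to \cite[Lemma~5.1]{cz1}, which gives the formula $\grd \fm\RE=\inf_{n\ge 0}\dpt E^n$, and then uses part (b) to rewrite this as $\inf_{n\ge 0}\dpt I^n=\grd \fm\RI$. So the paper's proof of (c) is a corollary of (b) together with an external lemma, whereas your proof of (c) is independent of (b) and self-contained. Your approach has the advantage of avoiding the reference to \cite{cz1}; the paper's approach has the advantage of tying the three parts together and making clear that (c) is really a uniform version of (b).
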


\begin{proof}
    Since $F$ is a free module of rank $e-1$ then $\RE \simeq \RI[\sek{t}{e-1}]$ and a) follows.
    For b) note that $\dpt J = \dpt JG$ for any ideal $J$ and any free module $G$.
    Hence, by a),
    $$  \dpt E^{n}= \min_{0 \leq j \leq n} \dpt I^{j}R[\sek{t}{e-1}]_{n-j}= \min_{0 \leq j \leq n} \dpt I^{j}.$$
    Finally, using \cite[Lemma~5.1]{cz1}
    $$ \grd \fm \RE = \inf_{n \geq 0} \dpt E^{n} = \inf_{n \geq 0} \dpt I^{n} = \grd \fm \RI,$$
    proving c).
\end{proof}

In the case where $E= I_1 \+ \cdots \+ I_e$ with $\grd I_i >0$,
$i= 1, \ldots,e$, then $ \RE = \CR(I_1 \+ \cdots \+ I_e)$
and so
$$E^n\simeq \bop_{k_1+\cdots+k_e=n}I^{k_1}\cdots I^{k_e}.$$
In particular, in the case where $I_1 = \cdots = I_e=I$
abbreviating $I \+ \cdots \+ I=I^{\+ e}$ we get
$$ E^n = I^n \+ \cdots \+ I^n = (I^n)^{\+ \binom{n+e-1}{n}}.$$
In this case, $\dpt E^n = \dpt I^n$, for every $n \geq 1$.
\medbreak

Now we establish some basic properties about the quotients $G^n
/E^n$ for general \fg{} \tf \rmo s. In fact, we prove that $G^n
/E^n$ has the same support, the same dimension and the same grade
as $G/E$, and we apply this to ideal modules.
\medbreak



\begin{proposition}\label{rp1}
    Let $R$ be a \nor{} and let $E \subn G \simeq R^e$,
    $e>0$, be an \rmo.
    Then, for every $n \geq 1$,
    \begin{enumerate}
    \item  $\supp G^n /E^{n} = \supp G/E$;

    \item  $\dim G^n /E^{n} = \dim G/E$;

    \item  $\grd G^n /E^{n} = \grd G/E$;

    \item  $\Min G^n /E^{n} = \Min G/E$;

    \item  $G^n=E^{n}  \iff G=E$.

    \item  $1 \leq \dpt E^{n} \leq \dpt G^n/E^{n}+1 \leq d-1$ if $R$ is \CM.
    \end{enumerate}
\end{proposition}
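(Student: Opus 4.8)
The plan is to reduce everything to part (a), whose only genuinely new content is a local statement: \emph{if $(B,\frn)$ is a local ring with residue field $k=B/\frn$, and $F\subseteq B^{e}$ is a finitely generated submodule with $F^{n}=(B^{e})^{n}$ for some $n\geq 1$, then $F=B^{e}$.} I would prove this by Nakayama. Since $\RE$ is standard graded, generated in degree one by $E$, if $f_{1},\dots,f_{m}$ generate $F$ then the products $f_{i_{1}}\cdots f_{i_{n}}$ generate $F^{n}$ inside $(B^{e})^{n}=\operatorname{Sym}^{n}(B^{e})$. If $F^{n}=(B^{e})^{n}$, then by Nakayama the reductions of these products modulo $\frn$ must span $\operatorname{Sym}^{n}(k^{e})$; but their $k$-span is precisely $\operatorname{Sym}^{n}(\overline{F})$, where $\overline{F}\subseteq k^{e}$ is the $k$-span of $\overline{f_{1}},\dots,\overline{f_{m}}$. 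Since $\dim_{k}\operatorname{Sym}^{n}(W)=\binom{\dim W+n-1}{n}$ is strictly increasing in $\dim W$ for $n\geq 1$, this forces $\overline{F}=k^{e}$, i.e. $F+\frn B^{e}=B^{e}$, so $F=B^{e}$ by Nakayama again.

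Granting the claim, part (a) follows quickly. If $\frp\notin\supp G/E$ then $E_{\frp}=G_{\frp}$, hence $\RE_{\frp}=\CR(E_{\frp})=\CR(G_{\frp})$ and so $E^{n}_{\frp}=G^{n}_{\frp}$ for all $n$ (using \refr{rpp}); thus $\supp G^{n}/E^{n}\subseteq\supp G/E$. Conversely, if $\frp\notin\supp G^{n}/E^{n}$ then $(E_{\frp})^{n}=(G_{\frp})^{n}$, and applying the local claim with $B=R_{\frp}$, $F=E_{\frp}$ gives $E_{\frp}=G_{\frp}$, so $\frp\notin\supp G/E$. This proves $\supp G^{n}/E^{n}=\supp G/E$, and then (b), (c), (d) are immediate, since the dimension and the set of minimal primes of a finitely generated module depend only on its support, and $\grd M=\inf\{\dpt R_{\frp}\mid\frp\in\supp M\}$ (the identity already invoked for $\grd F_{e}(E)$ in the text). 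Part (e) follows as well: $G^{n}=E^{n}$ iff $\supp G^{n}/E^{n}=\emptyset$ iff $\supp G/E=\emptyset$ iff $G=E$.

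For (f) I would take $R$ Cohen--Macaulay local of dimension $d$ and use that $E$ is an ideal module (so $\grd G/E\geq 2$, and in particular $d\geq 2$). By (e), $E^{n}\subsetneq G^{n}$, giving a short exact sequence $0\to E^{n}\to G^{n}\to G^{n}/E^{n}\to 0$ with $G^{n}$ free, so $\dpt G^{n}=d$ and $\dpt E^{n}\leq\dim E^{n}\leq d$. The depth lemma applied to this sequence yields $\dpt E^{n}\geq\min\{d,\dpt(G^{n}/E^{n})+1\}\geq 1$ and $\dpt(G^{n}/E^{n})\geq\min\{\dpt E^{n}-1,d\}=\dpt E^{n}-1$, i.e. $\dpt E^{n}\leq\dpt(G^{n}/E^{n})+1$. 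Finally, by (b) we have $\dim(G^{n}/E^{n})=\dim G/E$, and this is $\leq d-2$ by \refp{dime}(a) (this is the step using the ideal module hypothesis), so $\dpt(G^{n}/E^{n})\leq d-2$ and the remaining inequality follows.

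I expect the local claim in the first paragraph to be the main obstacle: the passage from ``$F^{n}$ exhausts $(B^{e})^{n}$'' to ``$\overline{F}$ exhausts $k^{e}$'', via the symmetric-power dimension count and Nakayama, is the one place where something has to be proved; everything afterwards is routine bookkeeping with support, the depth lemma, and results already in the paper.
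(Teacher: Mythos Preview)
Your argument is correct, but your route through part (a) differs from the paper's. Where you prove the local claim ``$F^{n}=(B^{e})^{n}\Rightarrow F=B^{e}$'' by Nakayama and a symmetric-power dimension count, the paper instead sandwiches
\[
E_{\frp}^{n}\subseteq E_{\frp}\cdot G_{\frp}^{n-1}\subseteq G_{\frp}^{n}=E_{\frp}^{n},
\]
reads off that $E_{\frp}$ is a reduction of the free module $G_{\frp}$, and then invokes \refc{as01} (free modules have no proper reductions). This is shorter because it recycles a result already established, but it hides the work inside that corollary; your argument is self-contained and in effect reproves that special case of \refc{as01} by hand. Both establish the same local statement, and parts (b)--(e) then follow identically.

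For (f) you are more careful than the paper, which simply says ``follows by the depth Lemma''. You correctly observe that the final bound $\dpt G^{n}/E^{n}+1\leq d-1$ needs $\dim G/E\leq d-2$, which comes from \refp{dime}(a) and hence requires $\grd G/E\geq 2$. The statement as written does not impose this, but without it the last inequality can fail (e.g.\ $R=k[[x,y]]$, $G=R$, $E=(x)$ gives $\dpt G/E+1=2=d$). Your reading of (f) as implicitly assuming the ideal-module hypothesis is consistent with the section's stated context.
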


\begin{proof}
Let $n \geq 1$.

a)  The inclusion ``$\sube$'' is clear.
    On the other hand, suppose that $\frp \in \sR \setminus
    \supp G^n /E^{n}$ then
    $$  E_{\frp}^{n} \sube E_{\frp}G_{\frp}^{n-1} \sube
    G_{\frp}^n = E^{n}_{\frp},$$
    and so $E_{\frp}G_{\frp}^{n-1}= G_{\frp}^n$, that is $E_{\frp}$
    is a reduction of $G_{\frp}$. But $G_{\frp}$ is a
    free $R_{\frp}$-module, hence $G_{\frp}$ has no proper
    reductions.
    Thus $E_{\frp}=G_{\frp}$ and so $\frp \not \in \supp G/E$.

b) -- e) are direct from a).

f) Since $R$ is \CM, $\dpt G^n = \dpt R=\dim R$. Now the
   ine\-qua\-li\-ty follows by the depth Lemma applied to the exact
   sequence $0 \ra E^{n} \ra G^n \ra G^n/E^{n} \ra 0$.
\end{proof}

In the following $E$ will be an ideal module. In this case we
deduce some consequences of the result above. In fact, the
equality $\supp G/E = \supp G^n/E^n$ implies that $E^n$ is an
ideal module, and so any result proved for $G/E$ holds for
$G^n/E^n$.

\begin{corollary}\label{rp11}
    Let $R$ be a \nor, $E$ an ideal module having
    rank $e >0$. Then, for every $n \geq 1$, $E^n$ is an ideal module having rank $e_{n}=\binom{n+e-1}{e-1}$.
    Moreover, $V(F_{e_{n}}(E^{n}))=V(F_{e}(E))$.
\end{corollary}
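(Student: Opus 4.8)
The plan is to deduce everything from Proposition~\ref{rp1} together with the basic facts about ideal modules and Fitting ideals recalled in Sections~\ref{sec:supp} and~\ref{sec:dad}. First I would fix $n \geq 1$ and embed $E \subn G \simeq R^e$ with $\grd G/E \geq 2$. Taking $n$-th Rees powers inside $R[\sek{t}{e}]$ we obtain $E^n \subn G^n \simeq R[\sek{t}{e}]_n \simeq R^{e_n}$ with $e_n = \binom{n+e-1}{e-1}$, since the Rees algebra of the free module $G$ is just the polynomial ring. By Proposition~\ref{rp1}(c), $\grd G^n/E^n = \grd G/E \geq 2$, so by the very definition of ideal module (the class singled out in \reft{bI1}), $E^n$ is an ideal module; moreover $E^n \subn G^n$ is a genuine proper inclusion by Proposition~\ref{rp1}(e), since $E \subn G$. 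The rank of $E^n$ equals that of $G^n$, namely $e_n$, because an ideal module has the same rank as any free module containing it with grade-$2$ quotient (see \refr{idmo1}(a)), or simply because $E^n_{\frp} = G^n_{\frp}$ for every $\frp \in \Ass R$ as these primes lie off $\supp G/E = \supp G^n/E^n$.

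For the last assertion I would use that, for an ideal module $F \subn H \simeq R^{e'}$ with $\grd H/F \geq 2$, one has $V(F_{e'}(F)) = \supp H/F$; this is exactly \reft{bI1} (or Proposition~\ref{nfl}). Applying this both to $E \subn G$ and to $E^n \subn G^n$ gives
\[
V(F_{e_n}(E^n)) = \supp G^n/E^n = \supp G/E = V(F_e(E)),
\]
where the middle equality is Proposition~\ref{rp1}(a). This is the desired equality of non-free loci.

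I do not expect a serious obstacle here: the statement is essentially a formal consequence of Proposition~\ref{rp1} plus the characterization of ideal modules in \reft{bI1}. The one point that requires a moment's care is the identification $G^n \simeq R^{e_n}$ and the claim that $E^n \subn G^n$ is the \emph{relevant} embedding (i.e.\ that the grade-$2$ condition genuinely transfers), together with keeping track of the fact that $E^n$ is torsionfree of rank $e_n$ so that its Rees algebra — hence the notion of $F_{e_n}(E^n)$ — is well defined and embedding-independent; but all of this is already packaged in Proposition~\ref{rp1} and \refr{rpp}, so the proof is short.
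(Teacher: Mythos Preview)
Your proposal is correct and follows essentially the same route as the paper: use \refp{rp1}(a),(c),(e) to transfer the grade-$2$ condition and the support from $G/E$ to $G^n/E^n$, then invoke \reft{bI1} to identify the Fitting loci with these supports. The only differences are cosmetic: the paper separates out the trivial case where $E$ is free and includes an explicit argument that $E^n \neq 0$, whereas you absorb both points into the observation that $E^n_{\frp}=G^n_{\frp}$ for $\frp\in\Ass R$ (which already forces $\rank E^n=e_n>0$).
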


\begin{proof}
    The assertion is clear if $E$ is free. Now suppose that $E$ is
    not free and let $n \geq 1$.
    We first note that $E^n \neq 0$.
    Suppose that $E \subn G \simeq R^e$.
    If $E^n =0$ for some $n$, then
    $$ \supp G/E = \supp G^n/E^n = \supp G^n = \sR. $$
    But $E_{\frp} \simeq R_{\frp}^e \simeq G_{\frp}$ for every $\frp
    \in \Ass E$ and $\grd G_{\frp}/E_{\frp} \geq 2$. Hence $E_{\frp}
    = G_{\frp}$ (cf. \refl{grd>2}) - a contradiction. Therefore,
    $E^n \neq 0$.
    Moreover, by the previous result, $E^n \subn G^n \simeq
    R^{\binom{n+e-1}{e-1}}$
    with $\grd G^n/E^n= \grd G/E \geq 2$, and so $E^{n}$ is an
    ideal module having rank $e_{n}$.
\end{proof}

\begin{corollary}\label{rp2}
    Let $R$ be a \cm{} with $\dim R = d \geq 2$ and let
    $E \subn G \simeq R^e$, be
    an ideal module having rank $e > 0$.
    If $G/E$ is \CM{} then $\dpt E^n \leq \dpt E$.
\end{corollary}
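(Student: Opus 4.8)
The plan is to combine the depth (or Auslander--Buchsbaum) information on $E^n$ coming from \refp{p1} and \refp{rp1}(f) with the hypothesis that $G/E$ is \CM, and then compare with $\dpt E$. First I would set $d=\dim R$ and recall from \refc{rp11} that $E^n$ is again an ideal module, say $E^n\subn G^n\simeq R^{e_n}$ with $e_n=\binom{n+e-1}{e-1}$ and $\grd G^n/E^n=\grd G/E\ge 2$. Since $G/E$ is \CM{} and $R$ is \CM, the short exact sequence $0\to E\to G\to G/E\to 0$ together with the depth Lemma gives $\dpt E=\dpt G/E+1$; indeed $\dpt G=d$ and $\dpt G/E\le d-1$, so the depth Lemma forces $\dpt E=\dpt G/E+1$, i.e. $E$ is \CM{} of depth $\dim G/E+1=d-\Ht F_e(E)+1$ by \refp{dime}(c).

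Next I would bound $\dpt E^n$ from above. From \refp{rp1}(f) we already have $\dpt E^n\le \dpt G^n/E^n+1$. So it suffices to show $\dpt G^n/E^n\le \dpt G/E$, i.e. that passing to the $n$-th Rees power does not increase the depth of the quotient. The cleanest route is to localize at an associated prime $\frp$ of $G/E$ with $\dpt R_\frp=\grd G/E$ (such a prime exists by definition of grade and by \refp{dime}(c), since $R$ is \CM{} and $G/E$ is \CM, every associated prime $\frp$ of $G/E$ has $\Ht\frp=\Ht F_e(E)$ and hence $\dpt R_\frp=\Ht F_e(E)=\grd G/E$). At such a $\frp$, $\dpt_{R_\frp}(G/E)_\frp=0$, so $\frp\in\Ass G/E=\Ass G^n/E^n$ by \refp{rp1}(d) together with the fact that for a \CM{} module associated and minimal primes coincide; hence $\dpt_{R_\frp}(G^n/E^n)_\frp=0$ and therefore $\dpt G^n/E^n\le\dpt R_\frp=\grd G/E=\dpt G/E$ (using that $G/E$ is \CM). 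Combining, $\dpt E^n\le\dpt G^n/E^n+1\le\dpt G/E+1=\dpt E$, which is the assertion.

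The step I expect to be the main obstacle is the inequality $\dpt G^n/E^n\le\dpt G/E$: a priori the module $G^n/E^n$ could be deeper than $G/E$ at non-minimal primes, and one must make sure the comparison is carried out at a prime where $G/E$ actually attains its depth. The key facts that make this work are \refp{rp1}(d) (the minimal primes, hence by Cohen--Macaulayness of $G/E$ the associated primes, of $G^n/E^n$ and of $G/E$ agree) and \refp{dime}(c) (for an ideal module over a \CM{} ring, $\dpt G/E=\dim G/E$ and all its associated primes have the common height $\Ht F_e(E)$). Once the correct localization $\frp$ is fixed, the depth drop at $\frp$ is immediate since $(G^n/E^n)_\frp$ has depth $0$ there, and the depth Lemma on $0\to E^n_\frp\to G^n_\frp\to (G^n/E^n)_\frp\to 0$ over the \CM{} local ring $R_\frp$ gives $\dpt_{R_\frp}E^n_\frp\le \dpt R_\frp=\grd G/E$, so globally $\dpt E^n\le\grd G/E+1$ while $\dpt E=\grd G/E+1$, concluding $\dpt E^n\le\dpt E$.
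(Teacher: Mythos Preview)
Your overall strategy is right --- reduce to showing $\dpt G^n/E^n \le \dpt G/E$ and combine this with $\dpt E = \dpt G/E + 1$ and $\dpt E^n \le \dpt G^n/E^n + 1$ --- but the argument you give for the key inequality contains two genuine errors. First, from $\frp \in \Ass G^n/E^n$ the standard bound is $\dpt G^n/E^n \le \dim R/\frp$, \emph{not} $\dpt G^n/E^n \le \dpt R_\frp$; the latter is false in general (for instance take $R=k[[x,y,z]]$, $M=R/(x)$, $\frp=(x)$: then $\dpt M=2$ but $\dpt R_\frp=1$). Second, the equality $\grd G/E = \dpt G/E$ that you invoke is wrong: since $R$ is \CM, $\grd G/E = \Ht F_e(E)$, whereas by hypothesis $\dpt G/E = \dim G/E = d - \Ht F_e(E)$, and these agree only when $d = 2\,\Ht F_e(E)$. (Relatedly, your claim that $E$ is \CM{} is also false: $\dpt E = d - \Ht F_e(E) + 1 \le d - 1 < d = \dim E$.) So as written the chain of inequalities does not hold.

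The fix is much simpler than your localization argument, and is exactly what the paper does: just use dimension. Since $E^n$ is again an ideal module by \refc{rp11}, \refp{dime}(c) gives $\dpt E^n = \dpt G^n/E^n + 1$, and then
\[
\dpt E^n \;=\; \dpt G^n/E^n + 1 \;\le\; \dim G^n/E^n + 1 \;=\; \dim G/E + 1 \;=\; \dpt G/E + 1 \;=\; \dpt E,
\]
where the middle equality is \refp{rp1}(b) and the next-to-last uses that $G/E$ is \CM. Your detour through associated primes can be salvaged if you systematically replace $\dpt R_\frp$ by $\dim R/\frp$ (for your chosen $\frp$ one has $\dim R/\frp = d - \Ht F_e(E) = \dim G/E$), but it is unnecessary once you notice that $\dpt \le \dim$ already does the job.
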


\begin{proof}
    Since $G/E$ is \CM,
    \begin{align*}
        \dpt E^n & = \dpt G^n/E^n +1 \leq \dim G^n/E^n +1 = \dim G/E
        +1 \\
         & = \dpt G/E +1 = \dpt E,
    \end{align*}
    as asserted.
\end{proof}

In the case where $\dim R = 2$, we have $\dpt E^n = 1$ and
$G^n/E^n$ is \CM{} of dimension $0$, for all $n \geq 1$, as in
\refc{dime2}.

\begin{corollary}\label{rp20}
    Let $R$ be a \cm, $\dim R = 2$ and let $E \subn G \simeq R^e$ with
    $\grd G/E \geq 2$, be
    an ideal module over $R$ having rank $e > 0$.
    Then, for every $n \geq 1$,
    \begin{enumerate}
    \item  $\dpt E^n = 1$;

    \item  $\dim G^n/E^n = \dpt G^n/E^n = 0$.
    \end{enumerate}
\end{corollary}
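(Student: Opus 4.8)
The plan is to deduce everything from the results already in hand, specialising the Cohen--Macaulay machinery to $\dim R = 2$. First I would record that since $E$ is an ideal module with $\grd G/E \geq 2$, and $\dim R = 2$, Proposition~\ref{dime} forces $\dim G/E = 0$; in particular $G/E$ is automatically Cohen--Macaulay (a module of dimension $0$ over a Noetherian local ring is Cohen--Macaulay). Then Corollary~\ref{rp11} tells us each $E^n$ is again an ideal module of rank $e_n = \binom{n+e-1}{e-1}$ sitting inside $G^n \simeq R^{e_n}$ with $\grd G^n/E^n \geq 2$, so the same argument applies verbatim to $E^n$: by Proposition~\ref{rp1}(b) we get $\dim G^n/E^n = \dim G/E = 0$, which gives part (i) of the dimension/depth equality in (b), and since a dimension-zero module is Cohen--Macaulay of depth $0$, we also get $\dpt G^n/E^n = 0$, finishing (b).

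Next I would turn to (a). Part (f) of Proposition~\ref{rp1} (with $d = 2$) gives the chain $1 \leq \dpt E^n \leq \dpt G^n/E^n + 1 \leq d - 1 = 1$; since we have just shown $\dpt G^n/E^n = 0$, the outer terms are both $1$, so $\dpt E^n = 1$. Alternatively one can invoke Corollary~\ref{rp2}: $G/E$ is Cohen--Macaulay, so $\dpt E^n \leq \dpt E$, and then use Proposition~\ref{dime}(c) to see $\dpt E = \dpt G/E + 1 = 0 + 1 = 1$, together with the lower bound $\dpt E^n \geq 1$ from Proposition~\ref{rp1}(f). Either route is a one-line deduction. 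I would present it through Corollary~\ref{rp2} and Proposition~\ref{rp1}(f) to keep the dependency chain short.

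There is essentially no obstacle here: the statement is a clean corollary assembled from Propositions~\ref{dime}, \ref{rp1} and Corollaries~\ref{rp11}, \ref{rp2}. The only point requiring a moment's care is making sure that "$G^n/E^n$ has dimension $0$" genuinely implies it is Cohen--Macaulay and of depth zero --- but over a Noetherian local ring, any nonzero finitely generated module of Krull dimension $0$ is supported only at the maximal ideal, hence has finite length, hence is Cohen--Macaulay with $\dpt = \dim = 0$; and if $E^n = G^n$ the statement about $E^n$ would fail only in the degenerate case excluded because $E \subsetneq G$ implies $E^n \subsetneq G^n$ by Proposition~\ref{rp1}(e). So the proof is just: apply Proposition~\ref{rp1} and Corollary~\ref{rp2}, citing Corollary~\ref{dime2} / Proposition~\ref{dime} for the dimension-zero fact, and conclude.

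\begin{proof}
    Let $n \geq 1$. By \refc{rp11}, $E^n \subn G^n \simeq R^{e_n}$
    is an ideal module of rank $e_n = \binom{n+e-1}{e-1}$ with
    $\grd G^n/E^n = \grd G/E \geq 2$. Since $\dim R = 2$, \refp{dime}
    (applied to the ideal module $E^n$) gives $\dim G^n/E^n = 0$; being a
    nonzero finitely generated module of Krull dimension $0$ over the
    Noetherian local ring $R$, $G^n/E^n$ has finite length and hence is
    \CM{} with $\dpt G^n/E^n = \dim G^n/E^n = 0$. This proves b).

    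For a), \refp{rp1}(f) (with $d = 2$) yields
    $1 \leq \dpt E^n \leq \dpt G^n/E^n + 1 = 1$, so $\dpt E^n = 1$.
\end{proof}
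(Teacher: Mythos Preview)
Your proof is correct and follows essentially the same route the paper intends: the paper leaves \refc{rp20} without proof, merely pointing back to \refc{dime2}, and your argument unpacks exactly that dependency (ideal module in dimension~$2$ forces $\dim G^n/E^n=0$ via \refp{dime}/\refc{rp11}, then \refp{rp1}(f) pins down $\dpt E^n=1$). Nothing to add.
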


It is known that the analytic spread $\lI$ of an ideal $I$ over a \nol{} satisfies the inequality
$$ \lI \leq \dim R - \inf_{n \geq 1} \dpt R/I^{n}$$
called the Burch's inequality. In \cite{cz1} we proved that
$$ \lE \leq \dim R +e-1 - \inf_{n \geq 1} \dpt G^{n}/E^{n},$$
for a module $E \sub G \simeq R^{e}$. To do this we first proved
that $\dpt G^{n}/E^{n}$ takes a constant value for large $n$. For
equimultiple modules we are able to give an easy proof of this
inequality.

\begin{proposition}\label{rp3}
    Let $R$ be a \nol{} with $\dim R = d \geq 2$ and let
    $E \subn G \simeq R^e$ be
    an \eq{} \rmo{} having rank $e \geq 2$. Then
    $$ \ell(E) \leq d +e-1 - \inf_{n \geq 1} \dpt G^n /E^n.$$
\end{proposition}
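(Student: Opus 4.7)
The plan is to derive the inequality directly from three ingredients, without needing any stability result for $\depth G^n/E^n$: the equimultiple hypothesis rewritten as $\ell(E)= \Ht F_e(E)+e-1$, the invariance of $\supp G^n/E^n$ under taking Rees powers, and the standard height/dimension inequality in a Noetherian local ring.

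First, since $E$ is a (non-free) ideal module with $\ad(E)=0$, \refp{nUfl} and the definition give
\[
\ell(E) = \Ht F_{e}(E) + e - 1.
\]
Next, by \refp{rp1}(a) together with \reft{bI1}, for every $n \geq 1$ we have
$\supp G^{n}/E^{n} = \supp G/E = V(F_{e}(E))$, so
\[
\dim G^{n}/E^{n} = \dim R/F_{e}(E).
\]

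Now I would invoke the elementary fact that in any Noetherian local ring
\[
\dim R/F_{e}(E) \leq d - \Ht F_{e}(E).
\]
For this, pick a minimal prime $\frq$ of $F_{e}(E)$ realizing $\dim R/F_{e}(E) = \dim R/\frq$; any saturated chain in $R_{\frq}$ concatenated with one in $R/\frq$ yields $\Ht \frq + \dim R/\frq \leq d$, and the conclusion follows from $\Ht \frq \geq \Ht F_{e}(E)$. Combining this with the universal bound $\depth M \leq \dim M$ for a finitely generated module gives, for every $n \geq 1$,
\[
\depth G^{n}/E^{n} \leq \dim G^{n}/E^{n} \leq d - \Ht F_{e}(E).
\]

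Taking the infimum over $n$ and adding $\ell(E) = \Ht F_{e}(E)+e-1$ produces
\[
\ell(E) + \inf_{n \geq 1} \depth G^{n}/E^{n} \leq (\Ht F_{e}(E)+e-1) + (d - \Ht F_{e}(E)) = d+e-1,
\]
which is the desired inequality. No real obstacle appears; the content is that equimultipleness collapses the analytic spread to a height, and then the trivial bound $\depth \leq \dim$ applied to $G^n/E^n$ (whose dimension is independent of $n$ by \refp{rp1}) suffices. This is precisely why Burch's inequality for equimultiple modules admits the short proof promised before the statement.
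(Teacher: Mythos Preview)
Your proof is correct and follows essentially the same route as the paper's: bound $\depth G^n/E^n \leq \dim G^n/E^n = \dim G/E \leq d - \Ht F_e(E)$ using \refp{rp1} and \refp{dime}, then substitute the equimultiple identity $\ell(E) = \Ht F_e(E) + e - 1$. The only cosmetic differences are that you spell out the height--dimension inequality explicitly and route through $\supp G^n/E^n = V(F_e(E))$ via \reft{bI1}, whereas the paper cites \refp{rp1}(b) and \refp{dime}(a) directly; the reference to \refp{nUfl} in your first display is unnecessary, since the identity is just the definition of equimultiple.
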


\begin{proof}
    We have, for every $n \geq 1$,
    $$ \dpt G^n /E^n \leq \dim G^n /E^n = \dim G/E
    \leq d - \Ht F_{e}(E).$$
    Therefore, since $E$ is \eq,
    $$ \inf_{n \geq 1} \dpt G^n /E^n  \leq d - \Ht F_{e}(E) = d
      - \ell(E) +e -1, $$
    and the required inequality follows.
\end{proof}

In the case where the Rees algebra $\RE$ is \CM{} the Burch's
ine\-qua\-lity is an equality (cf. \cite[Corollary~5.3]{cz1}). In
this case we obtain the following characterization for \eq{}
modules.

\begin{proposition}\label{eqbeq}
    Let $R$ be a \cm, $\dim R =d >0$ and let $E \subn G \simeq R^e$
    be an ideal module having rank $e>0$ but not free.
    If $\RE$ is \CM{} then
    the following are all equivalent:
    \begin{enumerate}
    \item  $E$ is \eq;

    \item  $\dpt G^n /E^n=d -\Ht F_{e}(E)$ for all $n>0$;

    \item  $\dpt G^n /E^n=d -\Ht F_{e}(E)$ for infinitely many $n$.
    \end{enumerate}
\end{proposition}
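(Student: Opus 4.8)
The engine of the proof is \cite[Corollary~5.3]{cz1}: since $\RE$ is \CM, Burch's inequality for $E$ (established in \cite{cz1}) is an equality,
$$ \lE = d + e - 1 - \inf_{n \geq 1} \dpt G^n/E^n. \qquad (\star) $$
The plan is to combine $(\star)$ with a uniform dimension ceiling on the quotients $G^n/E^n$ and then read off the three equivalences.

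First I would record the ceiling. For every $n \geq 1$ the quotient $G^n/E^n$ has the same dimension as $G/E$ by \refp{rp1}, and since $R$ is \CM{} we have $\dim G/E = d - \Ht F_{e}(E)$ by \refp{dime}; moreover each $E^n$ is again an ideal module (\refc{rp11}), so the setup is consistent. Hence
$$ \dpt G^n/E^n \;\leq\; \dim G^n/E^n \;=\; d - \Ht F_{e}(E) \qquad \text{for all } n \geq 1. $$
Feeding this into $(\star)$ gives $\ad(E) = \lE - e + 1 - \Ht F_{e}(E) = (d - \Ht F_{e}(E)) - \inf_{n\geq 1}\dpt G^n/E^n \geq 0$ (re-proving \refr{des}), with equality exactly when $\inf_{n\geq 1}\dpt G^n/E^n = d - \Ht F_{e}(E)$.

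Now (a)$\Leftrightarrow$(b) follows by a squeeze: by the ceiling each $\dpt G^n/E^n$ lies between $\inf_{m\geq 1}\dpt G^m/E^m$ and $d - \Ht F_{e}(E)$, so that infimum equals $d - \Ht F_{e}(E)$ if and only if every $\dpt G^n/E^n$ equals $d - \Ht F_{e}(E)$; together with the computation of $\ad(E)$ just made, this is precisely the equivalence of $E$ being \eq{} with condition (b). The implication (b)$\Rightarrow$(c) is trivial. For (c)$\Rightarrow$(a) I would use the other fact proved in \cite{cz1} en route to Burch's inequality, namely that $\dpt G^n/E^n$ stabilizes to a constant value for $n \gg 0$ and that this stable value is the infimum appearing in $(\star)$; granting this, if (c) holds then $\dpt G^n/E^n = d - \Ht F_{e}(E)$ for arbitrarily large $n$, so the stable value, hence $\inf_{n\geq 1}\dpt G^n/E^n$, equals $d - \Ht F_{e}(E)$, and $(\star)$ forces $\lE = \Ht F_{e}(E) + e - 1$, i.e. $E$ is \eq.

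The step I expect to need the most care is (c)$\Rightarrow$(a): the dimension ceiling bounds the depths $\dpt G^n/E^n$ only from above, so by itself ``infinitely many $n$'' does not control $\inf_{n\geq 1}\dpt G^n/E^n$ — one genuinely needs the eventual constancy of $\dpt G^n/E^n$ from \cite{cz1}, together with the identification of its stable value with that infimum. Everything else is formal: a squeeze between the dimension ceiling and the Burch equality $(\star)$.
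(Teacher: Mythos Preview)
Your proof is correct and follows essentially the same route as the paper: both hinge on the Burch equality $(\star)$ from \cite[Corollary~5.3]{cz1} combined with the uniform bound $\dpt G^n/E^n \leq \dim G^n/E^n = d - \Ht F_e(E)$, and for (c)$\Rightarrow$(a) both fall back on \cite{cz1} (the paper cites \cite[Corollary~6.2]{cz1} directly, while you unpack it via the eventual constancy of $\dpt G^n/E^n$). One small sharpening: the identity ``stable value $=\inf_n \dpt G^n/E^n$'' is not a general byproduct of the proof of Burch's inequality --- it genuinely uses the \CM{} hypothesis on $\RE$, since it comes from squeezing $\inf \leq \text{stable value} \leq d+e-1-\lE = \inf$, the last equality being $(\star)$ itself.
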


\begin{proof}
    a) $\Rightarrow$ b). Since $E$ is \eq{} then
    $$ \Ht F_{e}(E) +e-1 = \lE = d+e-1- \inf_{n \geq 1} \dpt
    G^{n}/E^{n},$$
    and so $\dpt G^n /E^n \geq \inf_{n \geq 1} \dpt G^{n}/E^{n}
    = d -\Ht F_{e}(E) = \dim G/E$, for all $n>0$.

    b) $\Rightarrow$ c) is immediate.

    c) $\Rightarrow$ a) follows by \cite[Corollary~6.2]{cz1}.
\end{proof}

\begin{corollary}\label{eqbeq1}
    Let $R$ be a \cm, $\dim R =d >0$ and let $E \subn G \simeq R^e$
    be an ideal module having rank $e>0$ but not free.
    If $E$ is \ci{} then $G^{n}/E^{n}$ are \CM{} and $\Ass G^{n}/E^{n}= \Ass G/E= \Min R/F_{e}(E)$, for all $n \geq 1$.
\end{corollary}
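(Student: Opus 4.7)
The plan is to combine Proposition \ref{eqbeq} with the Cohen-Macaulayness of $\RE$ and a dimension computation via $G/E$. Since $E$ is a complete intersection, part a) of Proposition \ref{pdf} tells us $E$ is equimultiple, and we can invoke the result of Simis-Ulrich-Vasconcelos cited at the beginning of section \ref{sec:eqci} to assert that $\RE$ is Cohen-Macaulay. Hence the hypotheses of Proposition \ref{eqbeq} are met and we obtain
$$ \dpt G^{n}/E^{n} = d - \Ht F_{e}(E) \quad \text{for all } n \geq 1. $$

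Next I would pin down $\dim G^{n}/E^{n}$. By Proposition \ref{rp1} b) this dimension equals $\dim G/E$, and for a complete intersection module the quotient $G/E$ is itself Cohen-Macaulay with $\dpt G/E = \pjd G/E = \Ht F_{e}(E)$ (the consequence of \cite[Corollary~5.6]{suv} and \cite[Proposition~3.3]{kn} recorded at the start of section \ref{sec:eqci}), so the Auslander-Buchsbaum formula (or directly Proposition \ref{dime} c)) gives $\dim G/E = d - \Ht F_{e}(E)$. Chaining the equalities,
$$ \dpt G^{n}/E^{n} = d - \Ht F_{e}(E) = \dim G^{n}/E^{n}, $$
so $G^{n}/E^{n}$ is Cohen-Macaulay for every $n \geq 1$.

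For the associated primes, once $G^{n}/E^{n}$ is Cohen-Macaulay we have $\Ass G^{n}/E^{n} = \Min G^{n}/E^{n}$. Proposition \ref{rp1} d) identifies this with $\Min G/E$, and since $\supp G/E = V(F_{e}(E))$ (because $E$ is an ideal module) we further get $\Min G/E = \Min R/F_{e}(E)$. Finally, $\Ass G/E = \Min R/F_{e}(E)$ is exactly Proposition \ref{eci1} c), which closes the chain of equalities required.

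There is no real obstacle: everything needed has been set up in the preceding sections. The only mildly delicate point is making sure that the depth identity furnished by Proposition \ref{eqbeq} is paired with the correct dimension computation; using the Cohen-Macaulayness of $G/E$ (which is available precisely because $E$ is a complete intersection, not merely equimultiple) pins the dimension of $G^{n}/E^{n}$ to $d - \Ht F_{e}(E)$ and thereby forces the Cohen-Macaulay conclusion for all $n$.
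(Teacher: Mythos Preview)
Your argument is correct and follows the same route as the paper's proof, which simply cites Propositions~\ref{eci1}, \ref{rp1} and \ref{eqbeq} together with the Cohen--Macaulayness of $\RE$; you have just unpacked how those three propositions combine. One small slip: in the displayed chain ``$\dpt G/E = \pjd G/E = \Ht F_{e}(E)$'' the first term should be $\grd G/E$ (as recorded at the start of section~\ref{sec:eqci}), not $\dpt G/E$---but since you also invoke Proposition~\ref{dime}~c) directly, the conclusion $\dim G/E = d - \Ht F_{e}(E)$ stands regardless.
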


\begin{proof}
    In this case $\RE$ is \CM{} and the assertions follow by
    \refps{eci1}, \ref{rp1} and \ref{eqbeq}.
\end{proof}


\bigskip


\begin{thebibliography}{BGG2}

\bibitem{av}
L.~Avramov, {\it Complete intersection and symmetric algebras},
J.Algebra {\bf 73} (1981), 248--263.



\bibitem{bv}
W.~Bruns \& U.~Vetter,
{\it Determinantal rings},
Lecture Notes in Mathematics {\bf 1327}, Springer-Verlag, 1988.

\bibitem{bh}
W.~Bruns \& J.~Herzog,
{\it Cohen-Macaulay rings},
Cambridge Studies in Advanced Ma\-the\-ma\-tics {\bf 39}, Cambridge
University Press, Cambridge, 1998.

\bibitem{br}
D.~Buchsbaum \& D.~S.~Rim, {\it A generalized Kozsul complex II.
Depth and multiplicity},
Trans. Amer. Math. Soc. {\bf 111} (1965),
197--224.


\bibitem{cz1}
A. L. Branco Correia, S. Zarzuela, {\it Some asymptotic properties
of the Rees powers of a module}, J. Pure Appl. Algebra, {\bf 207}
(2006), 373--385.

\bibitem{lb}
L.~Burch,
{\it Codimension and analytic spread},
Proc. Cambridge Philo. Soc. {\bf 72} (1972), 369--373.

\bibitem{cpv}
A.~Corso \& C.~Polini \& W. Vasconcelos,
{\it Links of prime ideals},
Math. Proc. Camb. Phil. Soc. {\bf 115} (1994), 431--436.

\bibitem{ehv}
D.~Eisenbud \& M. Herrmann \& W. Vogel,
{\it Remarks on regular sequences},
Nagoya Math. J. {\bf 67} (1977), 177--180.

\bibitem{ehu}
D. Eisenbud, C. Huneke \& B. Ulrich, {\it What is the Rees algebra
of a module?}, Proc. Amer. Math. Soc. {\bf 131} (3) (2002),
701--708.


\bibitem{tg1}
T.~Gaffney, {\it Integral closure of modules and Whitney
equisingularity}, Invent. Math. {\bf 107} (2) (1992), 301--322.

\bibitem{tg2}
T.~Gaffney, {\it Multiplicities and equisingularity of ICIS
germs}, Invent. Math. {\bf 123} (2) (1996), 209--220.

\bibitem{hio}
M.~Herrmann \& S.~Ikeda \& U.~Orbanz,
{\it Equimultiplicity and Blowing up},
Springer-Verlag, 1988.





\bibitem{dk}
D.~Katz, {\it Reduction criteria for modules}, Communications in
Algebra {\bf 23} (12) (1995), 4543--4548.

\bibitem{kn}
D.~Katz \& C.~Naude, {\it Prime ideals associated to Symmetric
powers of a module},
Communications in Algebra {\bf 23} (12)
(1995), 4549--4555.

\bibitem{kr}
D.~Kirby \& R. Rees, {\it Multiplicities in graded rings. I. The
general theory}, Commutative algebra: syzygies, multiplicities,
and birational algebra (South Hadley, MA, 1992), 209--267, Contem.
Math. 159, Amer. Math. Soc., Providence, RI, 1994.

\bibitem{kt}
S. L. Kleiman \& A. Thorup, {\it A geometric theory of the
Buchsbaum-Rim multiplicity}, Journal of Algebra {\bf 167} (1)
(1994), 168--231.

\bibitem{kunz2}
E.~Kunz,
{\it K\"ahler Differentials},
Advanced Lectures in Mathematics,
Fried. Vieweg \& Sohn, 1986.

\bibitem{jl}
J.~Lipman,
{\it Equimultiplicity, reduction, and blowing up},
Lecture Notes in Pure and Appl. Math., {\bf 68},
Marcel-Dekker, New York, (1979), 111--147.


\bibitem{mat}
H.~Matsumura,
{\it Commutative ring theory},
Cambridge Studies in Advanced Mathematics {\bf 8}, Cambridge
University Press, Cambridge, 1997.

\bibitem{mi}
A.~Micali,
{\it Sur les alg\`ebres universelles},
Ann. Inst. Fourier {\bf 14} (2), (1964), 33--88.

\bibitem{o}
A.~Ooishi, {\it Reductions of graded rings and pseudo-flar graded
modules}, Hiroshima Math. J. {\bf 18}, (1988), 463--477.



\bibitem{re}
D.~Rees,
{\it a-transforms of local rings and a theorem on multipli-
cities of ideals},
Proc. Camb. Phil. Soc. {\bf 57} (1961), 8--17.

\bibitem{dr}
D.~Rees, {\it Reductions of modules}, Math. Proc. Cambridge
Philos. Soc. {\bf 101} (3) (1987), 431--439.

\bibitem{suv}
A.~Simis, B.~Ulrich, W.~Vasconcelos,
{\it Rees algebras of modules},
Proc. London Math. Soc. {\bf 87} (3) (2003), 610--646.

\bibitem{sh}
I.~Swanson, C.~Huneke, {\it Integral closure of ideals, rings, and
modules.} London Mathematical Society, Lecture Note Series {\bf
336}, Cambridge University Press, Cambridge, 2006.

\bibitem{v1}
W.~Vasconcelos, {\it Arithmetic of Blowup Algebras}, London
Mathematical Society, Lecture Note Series {\bf 195}, Cambridge
University Press, Cambridge, 1994.

\bibitem{v2}
W.~Vasconcelos, {\it Integral closure. Rees algebras,
multiplicities, algorithms.} Springer Monographs in Mathematics,
Springer-Verlag, Berlin, 2005.


\end{thebibliography}
\end{document}